\def\leq {\leqslant}
\def\le {\leqslant}
\def\ge {\geqslant}
\def\geq {\geqslant}
\def\eps{\varepsilon}
\def\kappa{\varkappa}
\def\phi{\varphi}
\newtheorem{theorem}{Theorem}
\newtheorem{corollary}{Corollary}
\newtheorem{prop}{Proposition}
\newtheorem*{theoremA}{Theorem A}
\newtheorem{lemma}{Lemma}
\theoremstyle{definition}
\newtheorem{definition}{Definition}
\newtheorem{remark}{Remark}
\newtheorem{example}{Example}
\newcommand{\mcw}{\mathcal{W}}
\title[On $L_p$-integrability]
{On $L_p$-integrability of functions with general monotone Fourier coefficients}
\thanks{This research was  supported
by the Committee of Science of the Ministry of Science
and Higher Education of the Republic of Kazakhstan 
(Grant No. AP22688236)}
\author{A. Mukanov}
\address{A. Mukanov,
Institute of Mathematics and Mathematical Modeling, 
Pushkin street, 125, Almaty, Kazakhstan}
\email{mukanov.askhat@gmail.com}
\author{E. Nursultanov}
\address{E. Nursultanov,
Kazakhstan Branch of Lomonosov Moscow State University, 
Kazhymukan street, 11, Astana, Kazakhstan}
\email{er-nurs@yandex.ru}
\begin{document}
\begin{abstract}
We introduce new classes of general monotone sequences 
and study their properties. For functions whose 
Fourier coefficients belong to these classes, we establish Hardy–Littlewood-type theorems.
\end{abstract}
\subjclass[2020]{42A16, 42A32}
\keywords{Trigonometric series, general monotonicity, $L_p$-integrability}
\maketitle
\section{Introduction}

\subsection{Hardy-Littlewood theorem and its analogues}

In this paper, 
we explore the relationship between 
an integrable function $f(x)$ defined on the interval $[-\pi,\pi]$ and its Fourier coefficients sequence.  
One  such result is the well-known Hardy-Littlewood theorem \cite{HaLi}, \cite[Chapter 12]{Zy}.
\begin{theoremA}
Let $\{a_k\}_{k=0}^{\infty}$, $\{b_k\}_{k=1}^{\infty}$ be nonincreasing, nonnegative sequences, and
$f(x) \sim \frac{a_0}{2} + \sum\limits_{k=1}^{\infty} (a_k \cos  kx + b_k \sin kx)$.
Then, for any $1 < p < \infty$,
\begin{equation}\label{HL1}
\|f\|_{L_p([-\pi,\pi])} \asymp \frac{a_0}{2} + \left(\sum_{k=1}^{\infty} k^{p-2}(a_k^p + b_k^p)  \right)^{\frac{1}{p}}.
\end{equation} 
\end{theoremA}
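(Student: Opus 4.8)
The plan is to reduce the two-sided estimate \eqref{HL1} to a single model statement about sine series with a nonincreasing coefficient sequence, and then to prove that statement by separating a clean upper bound from a more delicate lower bound. First I would split $f$ into its even and odd parts, $f_e(x)=\tfrac{a_0}{2}+\sum_{k\ge1}a_k\cos kx$ and $f_o(x)=\sum_{k\ge1}b_k\sin kx$. Since $x\mapsto-x$ preserves the $L_p$-norm, one has $\|f_e\|_p,\|f_o\|_p\le\|f\|_p\le\|f_e\|_p+\|f_o\|_p$, and the right-hand side of \eqref{HL1} splits in the same additive way because $(A+B)^{1/p}\asymp A^{1/p}+B^{1/p}$ for $A,B\ge0$, with the constant term contributing $\tfrac{a_0}{2}$ to both sides. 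Thus it suffices to prove, for a single nonincreasing nonnegative sequence $\{c_k\}$, that $\|\sum_k c_k\sin kx\|_p\asymp(\sum_k k^{p-2}c_k^p)^{1/p}$; the cosine case is then identical, or is reducible to the sine case through the M. Riesz theorem, which is available precisely for $1<p<\infty$. Finally I would discretize the target: monotonicity gives $c_k\asymp c_{2^n}$ on each dyadic block $2^n\le k<2^{n+1}$, so $\sum_k k^{p-2}c_k^p\asymp\sum_n 2^{n(p-1)}c_{2^n}^p$, and the whole problem becomes $\|g\|_p^p\asymp\sum_n 2^{n(p-1)}c_{2^n}^p$ with $g=\sum_k c_k\sin kx$.

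For the upper bound I would use Abel summation together with the conjugate Dirichlet kernel bound $|\sum_{k\le m}\sin kx|\lesssim 1/x$ to obtain the pointwise estimate $|g(x)|\lesssim\sum_{k\le 1/x}c_k=:A(1/x)$, the low-frequency part being controlled trivially and the tail being dominated by $c_N/x\lesssim A(N)$. Partitioning $(0,\pi]$ into the intervals $I_n=(\pi2^{-n-1},\pi2^{-n}]$ and integrating turns $\|g\|_p^p$ into $\sum_n 2^{-n}A(2^n)^p$; since $A(2^n)\asymp\sum_{m\le n}2^m c_{2^m}$, the required inequality $\sum_n 2^{-n}A(2^n)^p\lesssim\sum_n 2^{n(p-1)}c_{2^n}^p$ is a discrete Hardy inequality with geometrically decaying weights $2^{-n}$, which holds for every nonnegative summand. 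This direction uses only monotonicity and is robust.

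The lower bound is where the real difficulty lies, and the naive pointwise estimate $|g(x)|\gtrsim A(1/x)$ is simply false: for slowly decaying coefficients the frequencies interfere and $g$ vanishes at many points whose ``expected size'' is large, so no pointwise bound can reproduce the norm. What survives is an averaged statement — the oscillation that annihilates $g$ at some points forces it to be large elsewhere — and I would recover $\int_{I_n}|g|^p\gtrsim 2^{n(p-1)}c_{2^n}^p$ by a duality argument, testing $g$ against functions localized both in space to $I_n$ and in frequency to the block $\Delta_n=[2^n,2^{n+1})$, on which $\{c_k\}$ is essentially the constant $c_{2^n}$, while controlling the interaction between distinct dyadic scales. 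This near-orthogonality of different scales is exactly the point where the hypothesis $1<p<\infty$ becomes indispensable, through $L_p$-boundedness of the conjugate function and Littlewood–Paley theory, and it is consistent with the known failure of \eqref{HL1} at the endpoints $p=1,\infty$. I expect this block-orthogonality / off-diagonal estimate to be the main obstacle; by comparison, the reductions of the first paragraph and the upper bound of the second are routine.
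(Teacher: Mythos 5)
Your reduction to a single sine series with nonincreasing coefficients, the dyadic discretization, and the upper bound via Abel summation, the conjugate Dirichlet kernel, and the discrete Hardy inequality are all correct and complete in outline. The genuine gap is the lower bound, which you yourself flag as the main obstacle and do not carry out. Worse, the specific target you set, $\int_{I_n}|g|^p\gtrsim 2^{n(p-1)}c_{2^n}^p$ on the dyadic annulus $I_n=(\pi 2^{-n-1},\pi 2^{-n}]$, attacks the problem at exactly the point where it is hardest: on $I_n$ the contribution of the frequencies $k>2^{n+1}$ to $\int_{I_n}g$ is, by Abel summation, of the same order $c_{2^n}$ as the contribution of the block $[2^n,2^{n+1})$ and is not sign-controlled, so the proposed frequency-localized testing really does have to beat an off-diagonal term of full size. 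No Littlewood--Paley or block-orthogonality argument is needed, and none appears in the classical proof or in this paper. The standard fix is to give up disjointness of scales: test against the full interval $(0,\pi/n)$, where $\int_0^{\pi/n}\sum_k c_k\sin kx\,dx=\sum_k c_k\bigl(1-\cos(k\pi/n)\bigr)/k$ is a sum of \emph{nonnegative} terms, so that keeping only $n/2\le k\le n$ gives $c_n\lesssim\int_0^{\pi/n}|g|$; the resulting heavily overlapping family of estimates is then summed not by orthogonality but by the integral Hardy inequality, $\int_0^{\pi}\bigl(t^{-1}\int_0^{t}|g|\bigr)^p\,dt\lesssim\|g\|_p^p$, and this is where $1<p<\infty$ actually enters the lower bound (the cosine case then follows via M.~Riesz, as you suggest).

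For comparison: the paper does not prove Theorem A directly but obtains it (and much more) as a special case of Theorems \ref{T5} and \ref{T6}. There the lower bound for $\|f\|_{L_p}$ is Theorem \ref{T1}, whose mechanism is the same as the classical one just described --- block averages of coefficients are bounded by integrals of $f$ against Dirichlet kernels satisfying $|D_w(x)|\lesssim\min(|w|,1/|x|)$, and the overlap is absorbed by rearrangement and Marcinkiewicz interpolation rather than by any orthogonality between dyadic blocks --- while the upper bound (Theorem \ref{T4}) is proved by duality and Abel summation rather than by your pointwise estimate. Your upper bound is a legitimate, more elementary alternative for the monotone case; your lower bound, as written, is not a proof.
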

The corresponding result for the function $f(x) \sim \sum\limits_{k=-\infty}^{\infty} c_k e^{i kx}$, where
$\{c_k\}_{k=0}^{\infty}$, $\{c_{-k}\}_{k=0}^{\infty}$ are nonincreasing, nonnegative sequences, can be stated as follows:
\begin{equation}\label{HL2}
\|f\|_{L_p([-\pi, \pi])} \asymp \left(\sum_{k = -\infty}^{\infty} (|k|+1)^{p-2}|c_k|^p\right)^{\frac{1}{p}}, \quad 1 < p < \infty.
\end{equation}

Equivalences \eqref{HL1}, \eqref{HL2} were extended in many papers, e.g., see 
\cite{BeDyTi, DyMuTi, DyNuKa, DyTiJMAA, DyTiWaterman, 
DyTi, DyTi22, GrSaSa, Nu, NuTi, Vu, YuZhZh} and bibliographies therein.
Mainly, authors of extensions of the Hardy-Littlewood theorem impose weaker conditions on Fourier coefficients than the monotonicity condition.

In \cite{DyTi, YuZhZh},  relation \eqref{HL1} was proved for functions 
with nonnegative  Fourier coefficients
$\{a_k\}_{k=1}^{\infty}$, $\{b_k\}_{k=1}^{\infty}$ from class of general monotone sequences
defined as follows:
\begin{definition}[\cite{Ti}]
We will say that a sequence of complex numbers $\{a_k\}_{k=1}^{\infty}$
belongs to class $\textnormal{GM}$,
if there exist constants $C > 0$ and $\lambda >1$
such that,  for any $n \ge 1$,
$$
\sum_{k=n}^{2n}|a_k - a_{k+1}|
\le \frac{C}{n}\sum_{k = \frac{n}{\lambda}}^{\lambda n} |a_k|.
$$
\end{definition}

In \cite[Theorem 1.2]{DyMuTi},  equivalence \eqref{HL1} was obtained  for functions 
with real-valued  Fourier coefficients 
$\{a_k\}_{k=1}^{\infty}$, $\{b_k\}_{k=1}^{\infty}$ from  $\textnormal{GM}$ without assumption on 
nonnegativity.

In \cite[Theorem 8.4]{GrSaSa},  equivalence \eqref{HL2} was derived for functions $f(x) \sim \sum\limits_{k=0}^{\infty} c_k e^{ikx}$, 
where $\{c_k\}_{k=1}^{\infty} \in \textnormal{GM}$ is a sequence of complex  numbers 
$c_k \in S_{\alpha, \beta} = \{z \in \mathbb{C} : |\arg z -\alpha| \le \beta \}, k \ge 1$, 
$0 \le \alpha < 2\pi$, $0 \le \beta < \frac{\pi}{2}$.

In \cite[Corollary 5]{Nu},  equivalence \eqref{HL2}   
was established, in case when $2 \le p < \infty$, for functions 
$f(x) \sim \sum\limits_{k=1}^{\infty} c_k e^{ikx}$,  
where the sequence $\{c_k\}_{k=1}^{\infty}$ of complex numbers belongs to  class
of weak monotone sequences.

\begin{definition}
We will say that a sequence $\{a_{n}\}_{n=1}^{\infty}$ of complex 
numbers belongs to the class of  weak monotone sequences $\textnormal{WM}$, 
if there exists $C >  0$ such that, for any $n \ge 1$,
$$
|a_n| \le \frac{C}{n} \left|\sum_{j = 1}^{n} a_{j}\right|.
$$ 
\end{definition}  
\begin{remark}
The notions of general monotonicity and weak monotonicity were
introduced by S. Tikhonov (see \cite{Ti}, \cite{TiZe}). 
Various classes of general monotone and weak monotone sequences 
(or functions) have been studied in the context of many problems of Fourier analysis 
and approximation theory; see, for instance,  \cite {BeDyTi, LiTi}.
\end{remark}

Analogues of the Hardy-Littlewood theorem for series  
with complex coefficients 
satisfying another general monotonicity conditions can be found in \cite{BeDyTi, DyNuKa}.

Another generalizations of the Hardy-Littlewood theorem can be found in \cite{Bo1, KoNuPe, Vu}.

\subsection{Main aim}
The main aim of this paper is to define the class of sequences $\{a_k\}_{k=-\infty}^{\infty}$ of complex numbers
such that, for any $1< p < \infty$, the relation
$$
\|f\|_{L_p([-\pi,\pi])} \asymp J_p(f)
$$
holds, where $f(x) \in L_1([-\pi, \pi])$ is a function  with Fourier series $\sum\limits_{k= -\infty}^{\infty} a_k e^{ikx}$, and 
$J_p(f)$ is Paley type functional, defined as follows
$$
J_p(f):= \left(\sum_{k= -\infty}^{\infty} (|k|+1)^{p-2} |a_k|^p\right)^{\frac{1}{p}}.
$$

The similar question is considered in the setting of the following  Paley type functional 
$$
J_p^*(f) = \left(\sum\limits_{k=-\infty}^{\infty} (|k|+1)^{p-2} (a_k^*)^p\right)^{\frac{1}{p}},
$$
where $\{a_k^*\}_{k=-\infty}^{\infty}$
is symmetric nonincreasing rearrangement of sequence $\{a_k\}_{k=-\infty}^{\infty}$, 
i.e., $a_0^* \ge a_{-1}^* \ge a_1^* \ge a_{-2}^* \ge a_2^* \ge \ldots$.
\subsection{Main results}
For the sequence $\{a_k\}_{k=-\infty}^{\infty}$, for the sake of symmetry, we put 
\begin{equation*}
|\Delta a_k| := 
\begin{cases}
|a_k - a_{k+1}|, & k > 0,\\
|a_k - a_{k-1}|, & k < 0,\\
|a_0 - a_1| + |a_0 - a_{-1}|, & k = 0.
\end{cases}
\end{equation*}

We will call by the interval in $\mathbb{Z}$ any finite arithmetic progression with difference 
equal to 1.  By $\mathcal{W}$ we denote the set of all intervals in $\mathbb{Z}$. 
Let us define new two classes of general monotone sequences $\textnormal{GM}^*$ and 
$\overline{\textnormal{GM}}$.

\begin{definition}\label{large-gm-1}
We will say that a sequence $\{a_{m}\}_{m=-\infty}^{\infty}$ of complex numbers
belongs to the class $\textnormal{GM}^*$, if there exists $C > 0$ such that, for
any ${n} \ge 0$,
$$
\sum_{[2^{n-1}]\le |m| < 2^{n}}\left|\Delta a_{m}\right|\leq 
C \sup_{k  \in \mathbb{N}_0} \min (1, 2^{k-n}) \widetilde{a}_{2^k},
$$ 
where 
$$
\widetilde{a}_{2^k} = \sup_{\substack{{w} \in \mathcal{W} \\|{w}|\ge 2^{k}}} 
\frac{1}{|{w}|} \left|\sum_{{j} \in {w}} a_{j}\right|, \quad k \ge 0,
$$
and $[\cdot]$  is the floor function, and $|w|$ is the cardinality of the set $w \in \mathcal{W}$.
\end{definition}

\begin{remark}
Note that 
$$
[2^{n-1}] = 
\begin{cases}
2^{n-1}, & if \ \  n \in \mathbb{N},\\
0, & if \ \ n = 0.
\end{cases}
$$
\end{remark}

\begin{definition}\label{large-gm-2}
We will say that a sequence $\{a_{m}\}_{m=-\infty}^{\infty}$ of complex numbers
belongs to the class $\overline{\textnormal{GM}}$, if there exists $C > 0$ such that, for
any ${n} \ge 0$,
$$
\sum_{[2^{n-1}] \le |m| < 2^n}\left|\Delta a_{m}\right|\leq 
C \sup_{k  \in \mathbb{N}_0} \min (1, 2^{k-n}) \widehat{a}_{2^k},
$$
where
$$
\widehat{a}_{2^k} = \sup_{2^{k}\le |m| < 2^{k+1}} \frac{1}{|m|+1}
\left|\sum_{j=0}^{m} a_j\right|, \quad k \ge 0.
$$
\end{definition}
\begin{remark}
It is easy to see that $\overline{\textnormal{GM}} \subseteq \textnormal{GM}^*$.
\end{remark}

The main results of this paper is the following theorems.
\begin{theorem}\label{T5}
Let $1<p<\infty$ and $f \in L_1([-\pi, \pi])$ be a function with Fourier series 
$\sum\limits_{k=-\infty}^{\infty} a_{k} e^{i{kx}}$.
Let  also $a=\{a_{k}\}_{k=-\infty}^{\infty} \in \textnormal{GM}^*$,
then 
\begin{equation}\label{f4}
\|f\|_{L_p([-\pi, \pi])} \asymp J_p^*(f).
\end{equation}
\end{theorem}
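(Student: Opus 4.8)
The plan is to pass through the dyadic averaged amplitudes $\widetilde a_{2^n}$ and establish two separate equivalences: a purely sequence-theoretic one, $J_p^*(f)\asymp\bigl(\sum_{n\ge0}2^{n(p-1)}\widetilde a_{2^n}^p\bigr)^{1/p}$, and a Fourier-analytic one, $\|f\|_{L_p}\asymp\bigl(\sum_{n\ge0}2^{n(p-1)}\widetilde a_{2^n}^p\bigr)^{1/p}$; together they give \eqref{f4}. As a guiding model it is convenient to introduce the sequence $b_m:=\widetilde a_{2^n}$ for $2^{n-1}\le|m|<2^n$ (and $b_0:=\widetilde a_{2^0}$), which is nonnegative and nonincreasing in $|m|$ because $\{\widetilde a_{2^n}\}_n$ is nonincreasing (the supremum defining $\widetilde a_{2^n}$ runs over the shrinking family $\{w:|w|\ge2^n\}$). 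For the function $g\sim\sum_m b_m e^{imx}$, relation \eqref{HL2} (equivalently Theorem A) yields $\|g\|_{L_p}\asymp\bigl(\sum_m(|m|+1)^{p-2}b_m^p\bigr)^{1/p}\asymp\bigl(\sum_{n\ge0}2^{n(p-1)}\widetilde a_{2^n}^p\bigr)^{1/p}$, so the two equivalences amount to $J_p^*(f)\asymp\|g\|_{L_p}$ and $\|f\|_{L_p}\asymp\|g\|_{L_p}$.

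For the sequence equivalence I would argue as follows. Since $a_m^*$ is nonincreasing, the averaging bound $\widetilde a_{2^n}\le\frac1{2^n}\sum_{i<2^n}a_i^*$ holds with no extra hypothesis (the average of any $|w|\ge2^n$ moduli is at most the average of the $|w|$ largest moduli, and this Cesàro average is maximized at $|w|=2^n$); feeding it into the weighted discrete Hardy inequality, valid because $p>1$, gives $\sum_n2^{n(p-1)}\widetilde a_{2^n}^p\lesssim\sum_n2^{n(p-1)}(a_{2^n}^*)^p\asymp J_p^*(f)^p$. The reverse inequality is where $\textnormal{GM}^*$ enters and excludes cancelling sign patterns (e.g.\ $a_k=(-1)^k$, for which the left side collapses while $a^*\equiv1$): expressing each $a_m$ with $2^{n-1}\le|m|<2^n$ as a fixed average over a near-optimal interval plus a telescoping sum of increments $\Delta a_j$, and bounding the total variation of these increments by the right-hand side of the $\textnormal{GM}^*$ inequality, one obtains the pointwise domination of $|a_m|$—hence of $a_{2^n}^*$—by $\sup_k\min(1,2^{k-n})\widetilde a_{2^k}$. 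Summing this kernel against the weight $2^{n(p-1)}$ is again a discrete Hardy estimate and yields $J_p^*(f)^p\lesssim\sum_n2^{n(p-1)}\widetilde a_{2^n}^p$, completing $J_p^*(f)\asymp\|g\|_{L_p}$.

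There remains the Fourier-analytic equivalence $\|f\|_{L_p}\asymp\|g\|_{L_p}$, i.e.\ that $f$ and $g$ carry comparable $L_p$ mass at each dyadic frequency scale. For the upper bound I would split $f=\sum_nf_n$ along the blocks $2^{n-1}\le|k|<2^n$, apply Abel summation so that each $f_n$ is a combination of the increments $\Delta a_k$ against partial Dirichlet kernels, estimate $\|f_n\|_{L_p}\lesssim2^{n(1-1/p)}\sum_{|k|\asymp2^n}|\Delta a_k|$ via $\|D_N\|_{L_p}\asymp N^{1-1/p}$, invoke the $\textnormal{GM}^*$ bound on $\sum_{|k|\asymp2^n}|\Delta a_k|$, and sum over $n$ using the near-orthogonality of the dyadic blocks (Littlewood–Paley). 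I expect the lower bound $\|f\|_{L_p}\gtrsim\|g\|_{L_p}$ to be the main obstacle. The naive route—bounding each average by $\widetilde a_{2^n}\le|w|^{-1/p'}\|f\|_{L_p}$ through Hölder and the conjugate-exponent estimate for the Dirichlet kernel—is too lossy: it gives only $2^{n(p-1)}\widetilde a_{2^n}^p\lesssim\|f\|_{L_p}^p$ per scale, which diverges upon summation. The correct argument is spatial rather than per-scale: for each $n$ one fixes a near-optimal interval $w_n$ of length $\asymp2^n$ realizing $\widetilde a_{2^n}$, uses the pointwise domination from the previous paragraph together with $\textnormal{GM}^*$ to rule out cancellation among the coefficients indexed by $w_n$, and thereby extracts $L_p$ mass of order $2^{n(p-1)}\widetilde a_{2^n}^p$ from the spatial region $|x|\asymp2^{-n}$—exactly as in the classical monotone lower bound. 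Since these regions are disjoint across $n$, the contributions add to $\|g\|_{L_p}^p$ with no loss, and chaining $\|f\|_{L_p}\asymp\|g\|_{L_p}\asymp J_p^*(f)$ finishes the proof.
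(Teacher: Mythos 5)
Your reduction to the quantity $\bigl(\sum_{n\ge0}2^{n(p-1)}\widetilde a_{2^n}^p\bigr)^{1/p}$ is in fact the paper's own route: this is precisely the net-space norm $\|a\|_{n_{p',p}}$, and your ``sequence equivalence'' $J_p^*(f)\asymp\|a\|_{n_{p',p}}$ is sound (the direction $\|a\|_{n_{p',p}}\lesssim\|a\|_{l_{p',p}}\asymp J_p^*(f)$ needs no hypothesis, and the converse follows from the pointwise bound $|a_m|\lesssim\widetilde a_{2^{n-1}}+\Theta_n(f)$ together with $\textnormal{GM}^*$ and the Hardy inequalities, much as in the paper). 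The genuine gaps are both in your ``Fourier-analytic equivalence'' $\|f\|_{L_p}\asymp\|a\|_{n_{p',p}}$. For the upper bound, Littlewood--Paley plus Minkowski gives $\|f\|_p\lesssim\bigl(\sum_n\|f_n\|_p^2\bigr)^{1/2}$, and for $p>2$ this $\ell^2$ sum \emph{dominates} the $\ell^p$ sum you need rather than being dominated by it; the loss is real, e.g.\ for $a_k=(|k|+1)^{-1/p'}$ truncated at $|k|=2^N$ one has $\|f_n\|_p\asymp1$ for $n\le N$, so the $\ell^2$ sum is $N^{1/2}$ while $\|f\|_p\asymp J_p^*(f)\asymp N^{1/p}$. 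The paper obtains the upper bound (Theorem \ref{T4}) for \emph{arbitrary} coefficient sequences by duality: pair $f$ with $g\in L_{p'}$, Abel-sum, and control the net-space norm $\|b\|_{n_{p,p'}}$ of the coefficients of $g$ by $\|g\|_{L_{p'}}$; that is where the correct $\ell^p$ summation over dyadic blocks comes from.

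For the lower bound $\|a\|_{n_{p',p}}\lesssim\|f\|_p$ your spatial-extraction sketch does not hold up: an interval $w_n$ nearly realizing $\widetilde a_{2^n}$ is only constrained to satisfy $|w_n|\ge2^n$, so it may be much longer and located at arbitrarily high frequency, and there is then no reason its contribution lives on $|x|\asymp2^{-n}$; moreover the same long interval can nearly realize $\widetilde a_{2^k}$ for many consecutive $k$, so the claimed disjointness and additivity of the extracted masses fails. A telling sign is that no general-monotonicity assumption can be needed at this step, since the inequality holds for every $f\in L_p$: the paper's Theorem \ref{T1} proves it from the elementary endpoint estimate $|w|^{-1/q}\bigl|\sum_{m\in w}a_m\bigr|\le\|f\|_{L_{q,1}}$ (H\"older against the Dirichlet kernel) followed by real interpolation (Marcinkiewicz together with the interpolation theorem for net spaces). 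Interpolation and duality are the two ingredients that make the argument close on the full range $1<p<\infty$, and both are absent from your proposal.
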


\begin{theorem}\label{T6}
Let $1<p<\infty$ and $f \in L_1([-\pi, \pi])$ be a function with Fourier series 
$\sum\limits_{k=-\infty}^{\infty} a_{k} e^{i{kx}}$.
Let  also $a=\{a_{k}\}_{k=-\infty}^{\infty} \in \overline{\textnormal{GM}}$,
then 
\begin{equation}\label{f5}
\|f\|_{L_p([-\pi, \pi])} \asymp J_p(f).
\end{equation}
\end{theorem}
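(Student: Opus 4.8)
\medskip
\noindent\emph{Proof idea.}
The plan is to deduce Theorem~\ref{T6} from Theorem~\ref{T5} by replacing the rearranged functional $J_p^*(f)$ with $J_p(f)$. Since $\overline{\textnormal{GM}}\subseteq\textnormal{GM}^*$, the hypothesis $a\in\overline{\textnormal{GM}}$ lets me invoke Theorem~\ref{T5}, which already gives $\|f\|_{L_p([-\pi,\pi])}\asymp J_p^*(f)$. Hence everything reduces to the purely sequential equivalence
\begin{equation*}
J_p(f)\asymp J_p^*(f),\qquad a\in\overline{\textnormal{GM}},\ 1<p<\infty .
\end{equation*}
I would establish this by comparing the two weighted $\ell_p$ functionals through their common weight $w_k:=(|k|+1)^{p-2}$ and the associated measure $\mu(\{k\}):=w_k$ on $\mathbb{Z}$, for which $\mu([-N,N])\asymp N^{p-1}$.

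One of the two inequalities is free and uses only the monotonicity of $w_k$ in $|k|$. For $p>2$ the weight is nondecreasing in $|k|$, so pairing the symmetric decreasing rearrangement $\{(a_k^*)^p\}$ with $w_k$ is the cost-minimising arrangement and the Hardy--Littlewood rearrangement inequality yields $J_p^*(f)\le J_p(f)$; for $1<p<2$ the weight is nonincreasing and the same inequality gives the reverse bound $J_p(f)\le J_p^*(f)$ (the case $p=2$ being Parseval's identity). In either case the nontrivial direction reduces, via the layer-cake identities $\sum_k w_k|a_k|^p=\int_0^\infty\mu(S_t)\,dt$ and $\sum_k w_k(a_k^*)^p=\int_0^\infty\mu(I_t)\,dt$, where $S_t:=\{k:|a_k|^p>t\}$ and $I_t$ is the centred interval with $|I_t|=\#S_t$, to the single pointwise estimate $\mu(S_t)\asymp\mu(I_t)$. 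Because $\mu$ is a power weight, this is in turn equivalent to the assertion that the large coefficients are never isolated far from the origin:
\begin{equation}\label{star-plan}
\max\{|k|:|a_k|^p>t\}\le C\,\#\{k:|a_k|^p>t\},\qquad t>0 .
\end{equation}

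The heart of the argument, and the step I expect to be the main obstacle, is deriving \eqref{star-plan} from the definition of $\overline{\textnormal{GM}}$. The difficulty is that the defining bound controls the dyadic variation $\sum_{[2^{n-1}]\le|m|<2^n}|\Delta a_m|$ only through the averaged partial sums $\widehat a_{2^k}$, which involve the cancellation-prone quantities $|\sum_{j=0}^m a_j|$ rather than the $|a_m|$ themselves; since the $a_k$ are arbitrary complex numbers, no sign or sector hypothesis is available to convert $|\sum a_j|$ into $\sum|a_j|$. I would fix a block $2^{n-1}\le|K|<2^n$ on which $|a_K|$ is large and bound the block maximum by the block variation through telescoping and the reverse triangle inequality $\big||a_m|-|a_{m+1}|\big|\le|a_m-a_{m+1}|=|\Delta a_m|$, so that a large $|a_K|$ forces $\sum_{[2^{n-1}]\le|m|<2^n}|\Delta a_m|\gtrsim|a_K|$. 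The $\overline{\textnormal{GM}}$ inequality then forces $\min(1,2^{k-n})\widehat a_{2^k}\gtrsim|a_K|$ for some scale $k$, i.e. an averaged partial sum of size $\gtrsim|a_K|\,2^{\max(k,n)}$ over an interval $[0,m]$ with $m\asymp 2^k$. Unwinding this average by Abel summation together with $|\sum_{j=0}^m a_j|\le\sum_{j=0}^{m}|a_j|$ shows that a fixed proportion of the coefficients of index $\lesssim 2^{\max(k,n)}\asymp|K|$ must themselves be of size comparable to $|a_K|$, which is exactly \eqref{star-plan}. Making this uniform over all scales and all levels $t$, in particular handling the coupling $\sup_k\min(1,2^{k-n})$ that links every block to every scale, is the delicate point, and I would formalise it as an auxiliary lemma estimating the weighted dyadic block quantities $2^{n(p-1)}\big(\max_{[2^{n-1}]\le|m|<2^n}|a_m|\big)^p$ in terms of the $\widehat a_{2^k}$; after that, the equivalence $J_p(f)\asymp J_p^*(f)$, and with it \eqref{f5}, follows by summation over the dyadic scales.
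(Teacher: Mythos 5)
Your overall strategy---deducing Theorem \ref{T6} from Theorem \ref{T5} by proving $J_p(f)\asymp J_p^*(f)$ for $a\in\overline{\textnormal{GM}}$---is legitimate in principle (both functionals are indeed equivalent to $\|f\|_{L_p}$ on this class), and your ``free'' direction via the rearrangement inequality is correct. The genuine gap is the key step: the pointwise level-set estimate $\max\{|k|:|a_k|^p>t\}\le C\,\#\{k:|a_k|^p>t\}$ is \emph{false} on $\overline{\textnormal{GM}}$. Take $a_0=M$, $a_N=M/N$ for a single large $N$, and $a_k=0$ otherwise. Then $\widehat{a}_{2^k}\asymp M2^{-k}$, so $\sup_{k}\min(1,2^{k-n})\widehat{a}_{2^k}\gtrsim M2^{-n}$ for every $n$, while the dyadic variation is $2M$ in the block $n=0$ and at most $2M/N\lesssim M2^{-n}$ in the block containing $N$; hence $a\in\overline{\textnormal{GM}}$ with an absolute constant. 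Yet at the level $t=\tfrac12(M/N)^p$ one has $\#S_t=2$ while $\max\{|k|:k\in S_t\}=N$, so your estimate fails arbitrarily badly. The same example pinpoints where the ``unwinding by Abel summation'' breaks down: a single huge coefficient near the origin can carry the entire averaged partial sum $\frac{1}{m+1}\left|\sum_{j=0}^m a_j\right|$, so one cannot conclude that a fixed proportion of the coefficients with index $\lesssim|K|$ have size comparable to $|a_K|$. Note also that in this example $J_p(f)$ and $J_p^*(f)$ are still comparable (the contribution of $a_N$ is negligible in both), which shows that the layer-cake integrals can agree while $\mu(S_t)$ and $\mu(I_t)$ differ wildly at individual levels: your reduction to a pointwise comparison is strictly stronger than what is true, so the argument cannot be repaired by merely adjusting constants or levels.

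For comparison, the paper never passes through $J_p^*$. It proves the two inequalities in \eqref{f5} directly, splitting into $1<p\le2$ and $2\le p<\infty$: in each range one inequality ($J_p(f)\lesssim\|f\|_{L_p}$ for $p\le2$, $\|f\|_{L_p}\lesssim J_p(f)$ for $p\ge2$) is the classical Hardy--Littlewood--Paley inequality valid for arbitrary coefficients, and the other is extracted from the $\overline{\textnormal{GM}}$ condition by controlling $I_p(f)$ through $A_p(a)\lesssim J_p(f)$ and Theorem \ref{T4} (for $p\le2$), respectively by bounding $J_p(f)\lesssim\|a\|_{n_{p',p}}\lesssim\|f\|_{L_p}$ via Theorem \ref{T1} (for $p\ge2$), in both cases using the Hardy inequalities \eqref{Hardy1}--\eqref{Hardy2} applied to the dyadic block quantities rather than to individual level sets. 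Any salvage of your route would likewise have to work with integrated dyadic quantities, not with a level-by-level comparison of distribution sets.
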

\subsection{Some remarks}

I. One can see that the conditions stated 
in Theorems \ref{T5} and \ref{T6} 
are presented in their most general form. 
In particular, in Theorems \ref{T5} and 
\ref{T6}, the following conditions hold:

\begin{enumerate}
\item[1.] The Fourier coefficients of the function are allowed to be complex;
\item[2.] No additional restrictions are imposed  on the coefficients, other than  the general monotonicity condition;
\item[3.] The equivalence of  function's $L_p$-norm  to the Paley type functional holds for  the entire range $1 < p < \infty$. 
\end{enumerate}

II. Note that the averages $\widetilde{a}_{2^k}$ and $\widehat{a}_{2^k}$ 
from Definitions \ref{large-gm-1} and \ref{large-gm-2} involve both 
elements of the sequence $\{a_n\}_{n=-\infty}^{\infty}$ 
with positive indices and its elements with non-positive indices.
This feature of Definitions \ref{large-gm-1} and \ref{large-gm-2} 
has a certain compensatory effect, 
which made it possible to extend the classes of sequences for which the Hardy--Littlewood theorem remains valid in a different way.
For example, in previously obtained generalizations of the Hardy--Littlewood theorem, for series of the form  
$
\sum\limits_{k=1}^{\infty} (a_k \cos kx + b_k \sin kx),
$  
general monotonicity condition is required for both sequences $\{a_k\}_{k=1}^{\infty}$ and $\{b_k\}_{k=1}^{\infty}$.  
Similarly, for series of the form  
$
\sum\limits_{k=-\infty}^{\infty} c_k e^{i kx},
$  
previously obtained generalizations require general (or weak) monotonicity condition 
for both sequences $\{c_k\}_{k=0}^{\infty}$ and $\{c_{-k}\}_{k=0}^{\infty}$.
In turn, general monotonicity conditions in classes $\textnormal{GM}^*$ 
and $\overline{\textnormal{GM}}$  allow certain parts of sequences to behave poorly.  
For instance, Section \ref{compensatory-effect} presents an example of a sequence  
$\{c_k\}_{k=-\infty}^{+\infty} \in \overline{\textnormal{GM}}$  
such that  
$\{c_k\}_{k=0}^{\infty} \notin \overline{\textnormal{GM}}$.    

III. Developing the idea of extending the classes of Fourier coefficient sequences  
for which the equivalence  $\|f\|_{L_p} \asymp J_p(f)$
remains valid, we  consider alternating series.  
For an alternating sequence $\{c_k\}_{k=-\infty}^{\infty}$,  
the condition of general monotonicity does not hold,  
even if the sequence $\{|c_k|\}_{k=-\infty}^{\infty}$ is monotone,  
because  
$
\sum\limits_{k=n}^{2n} |\Delta c_k| \asymp \sum\limits_{k=n}^{2n} |c_k|.
$  
Nevertheless, despite these restrictions,  
we have succeeded in establishing the equivalence  
$
\|f\|_{L_p} \asymp J_p(f)
$
for alternating series (see Corollary \ref{corol1}), as well as for other cases,  
by using Fourier $L_p$-multipliers.

The paper is organized as follows. 
In Section \ref{auxiliary}, we  introduce the net spaces 
and give some auxiliary results.
In Section \ref{Nursultanov-inequality}, we obtain some Fourier type inequalities
in the setting of the net spaces. 
In Section \ref{main-results}, we prove Theorems \ref{T5} and \ref{T6}. In Section \ref{comparison}, 
we compare  some classes of general monotone sequences. 
In particular, we show that  $\textnormal{GM}_{\mathbb{R}} \subsetneq \overline{\textnormal{GM}}$,
where $\textnormal{GM}_{\mathbb{R}} = 
\left\{\{a_k\}_{k=1}^{\infty} \in \textnormal{GM}: \ \ a_k \in \mathbb{R}, \ \ k \ge 1\right\}$.
In Section \ref{compensatory-effect}, 
we provide examples of sequences that illustrate 
the compensatory effect of the general monotonicity condition 
in the class $\overline{\textnormal{GM}}$.
In Section \ref{alternating-series}, we establish the equivalence 
$\|f\|_{L_p} \asymp J_p(f)$ for functions 
$f(x) \sim \sum\limits_{k=-\infty}^{\infty} c_k e^{ikx}$ satisfying condition 
$\{\lambda_k c_k\}_{k=-\infty}^{\infty} \in \overline{\textnormal{GM}}$
with some Fourier idempotent multiplier $\{\lambda_k\}_{k=-\infty}^{\infty}$.  

Throughout the paper, the expression $L \lesssim R$ 
means that $L \le CR $ for some constant $C > 0$. 
Moreover, $L \asymp R$ stands for $L \lesssim R \lesssim L$.

\section{Auxiliary results}\label{auxiliary}

Let $(\Omega, \mu)$ be a measurable space, and let $f(x)$ be a $\mu$-measurable on $\Omega$ function. By
$f^{*}(t)$ we denote the nonincreasing rearrangement of $f(x)$, i.e.,
$$
f^*(t) = \inf\{\sigma: \mu\{x \in \Omega: |f(x)|> \sigma\}\le t\}.
$$ 

For $0 < p \le \infty$, $0 < q \le \infty$, the Lorentz space $L_{p,q}(\Omega)$
is the set of $\mu$-measurable functions for which, the functional
$$
\|f\|_{L_{p,q}} = 
\begin{cases}
\left(\int\limits_0^{\mu(\Omega)} t^{\frac{q}{p}-1} \left(f^{*}(t)\right)^q  dt\right)^{\frac{1}{q}}, & \text{for} \ 0 < p <\infty \ \text{and} \ 0 < q < \infty, \\
\sup\limits_{t \in [0,\, \mu(\Omega)]} t^{\frac{1}{p}}f^*(t), & \text{for} \ 0 < p \le \infty \ \text{and} \ q = \infty,
\end{cases}
$$
is finite.

\begin{remark}
\begin{enumerate}
\item[\textnormal{1)}] In case when $\Omega = \mathbb{Z}$ and $\mu$ is a counting measure, 
corresponding discrete Lorentz spaces are denoted as  $l_{p,q}(\mathbb{Z})$.
\item[\textnormal{2)}] In case when $p=q$, $L_{p,p}(\Omega) = L_p(\Omega)$.
\end{enumerate}
\end{remark}

Now, let's describe the main tool of this paper: net spaces.
The net spaces was introduced by E. Nursultanov in \cite{Nu}. 
Net space methods have been used to solve many problems 
in Fourier analysis (see \cite{DyNu, DyNuKa, DyNuTiWe, Nu, 
Nu98East, NuTi}), 
approximation theory (see \cite{Nu06, NuTl}), and Fourier multiplier theory 
(see \cite{TlNuBa}).

To define the discrete net spaces we need introduce the average of the sequence. 

For any sequence $\{a_{m}\}_{m=-\infty}^{\infty}$  of
complex numbers, we will define the average $\{\widetilde{a}_{k}\}_{k=1}^{\infty}$ over $\mcw$ 
as follows: 
$$
{\widetilde{a}}_{k} = {\widetilde{a}}_{k}(\mcw)=\sup_{\substack{{w}  \in \mcw\\ |w|\ge k}}
\frac{1}{|w|}\left|\sum_{m \in w} a_{m}\right|, \quad k \in \mathbb{N}.
$$
\begin{definition}[\cite{Nu}]
Let  $1 < p < \infty$, $1 \le q \le \infty$.
Discrete net space $n_{p,q}$ is the set of sequences of complex numbers 
$\{a_{m}\}_{m=-\infty}^{\infty}$  for which, the functional
$$
\|a\|_{n_{p,q}} = 
\begin{cases}\left(\sum\limits_{k=1}^\infty 
 k^{\frac qp-1}{\widetilde{a}}_{k}^q\right)^{1/q}, & \textnormal{for} \ 1 < p < \infty \ \textnormal{and} \ 1 < q < \infty,\\
\sup\limits_{k \ge 1} k^{\frac{1}{p}} \widetilde{a}_k,& \textnormal{for} \ 1 < p \le \infty \ \textnormal{and} \   q  = \infty,
\end{cases}
$$
is finite.
\end{definition}

\begin{lemma}\label{alm_mon}
Let $\{a_n\}_{n=-\infty}^{\infty}$ be a sequence such that $\widetilde{a}_1 < \infty$. Then, for any integer $k \ge 0$,
\begin{equation}\label{almost_mon}
\widetilde{a}_{2^k} \le 5 \widetilde{a}_{2^{k+1}}.
\end{equation}
\end{lemma}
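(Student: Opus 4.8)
The plan is to first dispatch the trivial direction and then isolate the real content. Since $\widetilde a_j$ is a supremum over the set of intervals $w$ with $|w|\ge j$, and this constraint set shrinks as $j$ grows, the sequence $\{\widetilde a_j\}$ is nonincreasing; in particular $\widetilde a_{2^{k+1}}\le \widetilde a_{2^k}$, and all quantities involved are finite because they are dominated by $\widetilde a_1<\infty$. Hence \eqref{almost_mon} is only the assertion that we do not lose more than a fixed factor when the length threshold is halved. Unwinding the definition of $\widetilde a_{2^k}$, it suffices to show that every interval $w$ with $2^k\le |w| <2^{k+1}$ satisfies $\frac{1}{|w|}\bigl|\sum_{m\in w}a_m\bigr|\le 5\,\widetilde a_{2^{k+1}}$, since for $|w|\ge 2^{k+1}$ the average is already at most $\widetilde a_{2^{k+1}}$ directly from the definition.

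The main idea is a one-sided extension. Given $w$ with $n:=|w|$ and $2^k\le n<2^{k+1}$, I would append to $w$ a consecutive block $v$ of exactly $2^{k+1}$ integers (say immediately to the right of $w$), forming the interval $w'=w\cup v$ with $|w'|=n+2^{k+1}$ and $w'\setminus w=v$. Choosing the block length to be exactly $2^{k+1}$ is the crucial point, because then both $w'$ and $v$ are admissible competitors for $\widetilde a_{2^{k+1}}$: indeed $|v|=2^{k+1}$ and $|w'|=n+2^{k+1}\ge 2^{k+1}$. Writing $\sum_{m\in w}a_m=\sum_{m\in w'}a_m-\sum_{m\in v}a_m$ and combining the triangle inequality with the elementary bound $\bigl|\sum_{m\in u}a_m\bigr|\le |u|\,\widetilde a_{|u|}\le |u|\,\widetilde a_{2^{k+1}}$ (valid for any interval $u$ with $|u|\ge 2^{k+1}$, once more by monotonicity of $\widetilde a$) gives
\[
\Bigl|\sum_{m\in w}a_m\Bigr|\le \bigl(n+2^{k+1}\bigr)\widetilde a_{2^{k+1}}+2^{k+1}\widetilde a_{2^{k+1}}=\bigl(n+2^{k+2}\bigr)\widetilde a_{2^{k+1}}.
\]

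Dividing by $n$ and using $n\ge 2^k$, so that $2^{k+2}/n\le 4$, yields $\frac{1}{n}\bigl|\sum_{m\in w}a_m\bigr|\le(1+4)\widetilde a_{2^{k+1}}=5\,\widetilde a_{2^{k+1}}$; taking the supremum over all admissible $w$ then completes the proof. No step here is genuinely hard — the only thing one must get right is the bookkeeping of the block length. It is worth noting that the symmetric alternative, covering $w$ by two intervals $w_1,w_2$ of length $2^{k+1}$ with $w_1\cap w_2=w$ and applying inclusion–exclusion, also works but produces the weaker constant $7$; the one-sided extension is what delivers the clean factor $5$.
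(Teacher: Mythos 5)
Your proof is correct and follows essentially the same idea as the paper's: extend the interval on one side by a block long enough that both the enlarged interval and the appended block are admissible competitors for $\widetilde a_{2^{k+1}}$, then apply the triangle inequality and divide by $|w|$. The only cosmetic difference is that the paper appends a block of length $2|w_1|$ to an arbitrary $w_1$ with $|w_1|\ge 2^k$ (giving $3+2=5$), whereas you first reduce to $2^k\le|w|<2^{k+1}$ and append a fixed block of length $2^{k+1}$ (giving $1+4=5$); both yield the same constant.
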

\begin{proof}
Fix an integer $k \ge 0$ and consider an interval 
$w_1 \subset \mathbb{Z}$ with $|w_1| \ge 2^k$. 
Define an interval $w \subset \mathbb{Z}$ satisfying the following conditions:
\begin{enumerate}
\item[(i)] $w_1 \subset w$;
\item[(ii)] $|w| = 3 |w_1|$;
\item[(iii)] the set $w_2 := w \setminus w_1$ is an interval of integer numbers.
\end{enumerate}
For these  intervals, observe that
$$
\left|\sum_{m \in w_1} a_m\right| = \left|\sum_{m \in w} a_m - \sum_{m \in w_2} a_m\right|
\le \left|\sum_{m \in w} a_m\right| + \left|\sum_{m \in w_2} a_m\right|.
$$
Dividing both sides of this inequality by $|w_1|$, we obtain
\begin{equation*}
\begin{split}
\frac{1}{|w_1|}\left|\sum_{m \in w_1} a_m\right| &  
\le \frac{1}{|w_1|}\left|\sum_{m \in w} a_m\right| + 
\frac{1}{|w_1|}\left|\sum_{m \in w_2} a_m\right| 
= \frac{3}{|w|}\left|\sum_{m \in w} a_m\right| + 
\frac{2}{|w_2|}\left|\sum_{m \in w_2} a_m\right|.
\end{split}
\end{equation*}
Taking the supremum over all intervals $w_1$ with $|w_1| \ge 2^k$, we get
\begin{equation*}
\begin{split}
\widetilde{a}_{2^k} & = \sup_{|w_1|\ge 2^k}\frac{1}{|w_1|}\left|\sum_{m \in w_1} a_m\right|   
\le \sup_{|w_1|\ge 2^k}\left( \frac{3}{|w|}\left|\sum_{m \in w} a_m\right| + 
\frac{2}{|w_2|}\left|\sum_{m \in w_2} a_m\right|\right) \\
& \le \sup_{|w|\ge 3\cdot 2^k} \frac{3}{|w|}\left|\sum_{m \in w} a_m\right| + 
\sup_{|w_2|\ge 2\cdot 2^k}\frac{2}{|w_2|}\left|\sum_{m \in w_2} a_m\right| 
 \le \sup_{|w_2|\ge  2^{k+1}}\frac{5}{|w_2|}\left|\sum_{m \in w_2} a_m\right| = 5 \widetilde{a}_{2^{k+1}}.
\end{split}
\end{equation*}
Thus, the desired inequality is established.
\end{proof}

Using standard arguments, along with the monotonicity of the sequence 
$\{\widetilde{a_k}\}_{k=1}^{\infty}$ and inequality \eqref{alm_mon}, 
we arrive at the following lemma.   
\begin{lemma}
For any $1 < p < \infty$, $1 \le q \le \infty$, 
the following equivalence holds:
$$
\|a\|_{n_{pq}} \asymp \left(\sum_{k=0}^{\infty} \left(2^{\frac{k}{p}}\widetilde{a}_{2^{k}}\right)^q\right)^{\frac{1}{q}}.
$$
\end{lemma}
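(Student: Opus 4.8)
The plan is to carry out a standard dyadic block decomposition of the sum defining $\|a\|_{n_{p,q}}$, controlling each block by the single dyadic value $\widetilde{a}_{2^j}$. I would treat the case $1<q<\infty$ first and dispatch $q=\infty$ separately at the end. The two ingredients the text already flags are exactly what drives the argument: the monotonicity of $\{\widetilde{a}_k\}$ and the almost-monotonicity inequality \eqref{almost_mon}.

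First I partition $\mathbb{N}=\bigcup_{j\ge 0}\{k:2^j\le k<2^{j+1}\}$ and write
$$
\sum_{k=1}^{\infty}k^{\frac qp-1}\widetilde{a}_k^{\,q}=\sum_{j=0}^{\infty}\ \sum_{2^j\le k<2^{j+1}}k^{\frac qp-1}\widetilde{a}_k^{\,q}.
$$
On a fixed block I use two facts. First, $k^{\frac qp-1}\asymp 2^{j(\frac qp-1)}$ uniformly in $k$ over the block, with implied constants depending only on $p,q$, since the extreme values of $k^{\frac qp-1}$ over the block differ only by the bounded factor $2^{|\frac qp-1|}$. Second, the sequence $\{\widetilde{a}_k\}$ is nonincreasing in $k$ — immediate from its definition, because enlarging the constraint $|w|\ge k$ shrinks the family over which the supremum is taken — so $\widetilde{a}_{2^{j+1}}\le\widetilde{a}_k\le\widetilde{a}_{2^j}$ for $k$ in the block. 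As the block contains exactly $2^j$ integers, these give the upper estimate
$$
\sum_{2^j\le k<2^{j+1}}k^{\frac qp-1}\widetilde{a}_k^{\,q}\lesssim 2^{j(\frac qp-1)}\cdot 2^{j}\cdot\widetilde{a}_{2^j}^{\,q}=\bigl(2^{\frac jp}\widetilde{a}_{2^j}\bigr)^{q}.
$$

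For the matching lower bound on each block I replace $\widetilde{a}_k$ by the smaller $\widetilde{a}_{2^{j+1}}$ and then invoke Lemma \ref{alm_mon} in the form $\widetilde{a}_{2^{j+1}}\ge\frac15\widetilde{a}_{2^j}$, obtaining
$$
\sum_{2^j\le k<2^{j+1}}k^{\frac qp-1}\widetilde{a}_k^{\,q}\gtrsim 2^{j(\frac qp-1)}\cdot 2^{j}\cdot\widetilde{a}_{2^{j+1}}^{\,q}\gtrsim 5^{-q}\bigl(2^{\frac jp}\widetilde{a}_{2^j}\bigr)^{q}.
$$
Summing the two-sided block estimates over $j\ge 0$ and taking $q$-th roots yields the claimed equivalence for $1<q<\infty$. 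For $q=\infty$ the same block analysis gives, for $2^j\le k<2^{j+1}$, the bounds $k^{1/p}\widetilde{a}_k\le 2^{1/p}\,2^{j/p}\widetilde{a}_{2^j}$ and $k^{1/p}\widetilde{a}_k\ge 2^{j/p}\widetilde{a}_{2^{j+1}}\ge\frac15\,2^{j/p}\widetilde{a}_{2^j}$, so taking the supremum over $k$ (equivalently over $j$) produces $\sup_{k\ge 1}k^{1/p}\widetilde{a}_k\asymp\sup_{j\ge 0}2^{j/p}\widetilde{a}_{2^j}$. I do not expect a genuine obstacle: the only place needing more than bookkeeping is the lower bound, where the monotonicity of $\{\widetilde{a}_k\}$ points the wrong way and must be supplemented by the reverse control \eqref{almost_mon} from Lemma \ref{alm_mon}; everything else is the routine comparison of a sum of a slowly varying summand against its dyadic samples.
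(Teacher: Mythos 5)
Your proof is correct and is exactly the ``standard argument'' the paper has in mind: the paper omits the proof but explicitly names the two ingredients you use, namely the monotonicity of $\{\widetilde{a}_k\}$ for the upper block estimate and the almost-monotonicity inequality \eqref{almost_mon} for the lower one. The dyadic block decomposition and the treatment of $q=\infty$ are all as intended, so there is nothing to add.
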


\begin{lemma}\label{zero_modif}
Let $\{a_k\}_{k=-\infty}^{\infty}$ be a sequence of complex numbers, 
and define a new sequence $\{b_k\}_{k=-\infty}^{\infty}$ as follows:
$$
b_k = 
\begin{cases}
a_k, & k \neq 0;\\
\frac{1}{2}a_0, & k = 0.
\end{cases}
$$ 
Then, for any integer $r \geq 0$, the following inequality holds:
$$
\widetilde{b}_{2^r}
\le 6\widetilde{a}_{2^r}.
$$
\end{lemma}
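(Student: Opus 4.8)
The plan is to exploit the fact that $\{b_k\}$ and $\{a_k\}$ differ only at the single index $0$, where $b_0 = \frac{1}{2}a_0$. Consequently, for any interval $w \in \mathcal{W}$ the partial sums satisfy $\sum_{m \in w} b_m = \sum_{m \in w} a_m$ when $0 \notin w$, and $\sum_{m \in w} b_m = \sum_{m \in w} a_m - \frac{1}{2}a_0$ when $0 \in w$. Fixing $r \ge 0$, I would take an arbitrary interval $w$ with $|w| \ge 2^r$, bound $\frac{1}{|w|}\left|\sum_{m \in w} b_m\right|$ by a multiple of $\widetilde{a}_{2^r}$, and then pass to the supremum over all such $w$.

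The case $0 \notin w$ is immediate: the $b$- and $a$-sums coincide, so $\frac{1}{|w|}\left|\sum_{m \in w} b_m\right| = \frac{1}{|w|}\left|\sum_{m \in w} a_m\right| \le \widetilde{a}_{2^r}$ by definition of $\widetilde{a}_{2^r}$. In the case $0 \in w$, the triangle inequality gives $\frac{1}{|w|}\left|\sum_{m \in w} b_m\right| \le \widetilde{a}_{2^r} + \frac{1}{2|w|}|a_0|$, so, since $|w| \ge 2^r$, it remains only to control the single term $|a_0|$ by a constant multiple of $\widetilde{a}_{2^r}$.

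The key step, and really the only place requiring an idea, is to estimate the isolated coefficient $a_0$ through averages over \emph{long} intervals. I would recover $a_0$ as a difference of two interval sums, both of length at least $2^r$: taking $w_1 = \{0, 1, \ldots, 2^r\}$ and $w_2 = \{1, 2, \ldots, 2^r\}$, one has $a_0 = \sum_{m \in w_1} a_m - \sum_{m \in w_2} a_m$ with $|w_1| = 2^r + 1$ and $|w_2| = 2^r$. Since both cardinalities are $\ge 2^r$, the definition of $\widetilde{a}_{2^r}$ bounds each normalized sum, whence $|a_0| \le |w_1|\,\widetilde{a}_{2^r} + |w_2|\,\widetilde{a}_{2^r} = (2^{r+1}+1)\,\widetilde{a}_{2^r}$.

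Combining the pieces in the case $0 \in w$, I obtain $\frac{1}{2|w|}|a_0| \le \frac{2^{r+1}+1}{2\cdot 2^r}\,\widetilde{a}_{2^r} \le \frac{3}{2}\,\widetilde{a}_{2^r}$, and therefore $\frac{1}{|w|}\left|\sum_{m \in w} b_m\right| \le \frac{5}{2}\,\widetilde{a}_{2^r} \le 6\,\widetilde{a}_{2^r}$; the case $0 \notin w$ is already covered by the bound $\widetilde{a}_{2^r}$. Taking the supremum over all admissible $w$ yields $\widetilde{b}_{2^r} \le 6\,\widetilde{a}_{2^r}$, in fact with room to spare. The main obstacle is precisely the observation that a single coefficient can be written as a difference of two sums over intervals of length $\ge 2^r$, so that the net-average $\widetilde{a}_{2^r}$ controls it; the remainder is routine manipulation with the triangle inequality and does not require the almost-monotonicity estimate of Lemma \ref{alm_mon}.
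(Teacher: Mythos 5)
Your proof is correct, and it takes a genuinely different route from the paper's. The paper also splits into the cases $0\in w$ and $0\notin w$, but for $0\in w$ it rewrites $b_0+\sum_{m\in w\setminus\{0\}}a_m$ as $\tfrac12\bigl(\sum_{m\in w}a_m+\sum_{m\in w\setminus\{0\}}a_m\bigr)$, bounds the second piece as an average over a set of cardinality $|w|-1\ge 2^{r-1}$, and then invokes the almost-monotonicity estimate $\widetilde{a}_{2^{r-1}}\le 5\widetilde{a}_{2^{r}}$ of Lemma \ref{alm_mon} to return to scale $2^r$, which is where the constant $6$ comes from. You instead isolate the single perturbed term, reducing everything to controlling $|a_0|$, and your key observation --- that $a_0$ is the difference of the sums over $\{0,\dots,2^r\}$ and $\{1,\dots,2^r\}$, both of length at least $2^r$, so that $|a_0|\le(2^{r+1}+1)\widetilde{a}_{2^r}$ --- does this directly at scale $2^r$. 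This buys you two things: you avoid Lemma \ref{alm_mon} entirely, and you sidestep the set $w\setminus\{0\}$, which in the paper's argument need not be an interval when $0$ is interior to $w$ (so the paper's appeal to $\sup_{w\in\Omega_{r-1}}$ there is the more delicate step). Your constant $\tfrac52$ is also sharper than the stated $6$; all steps check out, including the $r=0$ case where $\frac{2^{r+1}+1}{2\cdot 2^{r}}=\frac32$.
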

\begin{proof}
Let $r \geq 0$, and let $w \subset \mathbb{Z}$ 
be an interval of integers such that $|w| \geq 2^r$, 
$|w| > 1$, and $0 \in w$. Then, we have
\begin{equation}\label{ab-1}
\begin{split}
\frac{1}{|w|} \left|\sum_{m\in w} b_m\right| 
& = \frac{1}{|w|} \left|b_0 + \sum_{m\in w \setminus{\{0\}}} b_m\right| 
= \frac{1}{|w|} \left|\frac{1}{2}a_0 + \sum_{m\in w \setminus{\{0\}}} a_m\right| \\
& = \frac{1}{2|w|} \left|a_0 + 2\sum_{m\in w \setminus{\{0\}}} a_m\right|
\le \frac{1}{2|w|} \left|\sum_{m\in w } a_m\right|
+ \frac{1}{2|w|} \left|\sum_{m\in w \setminus{\{0\}}} a_m\right|\\
& = \frac{1}{2|w|} \left|\sum_{m\in w } a_m\right|
+ \frac{|w|-1}{2|w|} \frac{1}{|w|-1} \left|\sum_{m\in w \setminus{\{0\}}} a_m\right|\\
& < \frac{1}{|w|} \left|\sum_{m\in w } a_m\right|
+  \frac{1}{|w|-1} \left|\sum_{m\in w \setminus{\{0\}}} a_m\right|.
\end{split}
\end{equation} 
Now, consider the family of intervals $\Omega_r = \{w \in \mathcal{W} \mid |w| \geq 2^r\}$. 
Divide this family into two subfamilies: $\Omega_{r,1} = \{w \in \Omega_r \mid 0 \in w\}$ and $\Omega_{r,2} = \{w \in \Omega_r \mid 0 \notin w\}$.

Let $r \ge 1$, then inequalities \eqref{almost_mon}  
and \eqref{ab-1} imply that 
\begin{equation*}
\begin{split}
\widetilde{b}_{2^r} & =  
\sup_{w \in \Omega_r}\frac{1}{|w|}\left|\sum_{m \in w} b_m\right| = 
\max\left\{\sup_{w\in \Omega_{r,1}}\frac{1}{|w|}\left|\sum_{m \in w} b_m\right|,
\sup_{w\in \Omega_{r,2}}\frac{1}{|w|}\left|\sum_{m \in w} b_m\right|\right\} \\
& \le \max\left\{\sup_{w\in \Omega_{r,1}}\left(\frac{1}{|w|} \left|\sum_{m\in w } a_m\right|
+  \frac{1}{|w|-1} \left|\sum_{m\in w \setminus{\{0\}}} a_m\right|\right),
\sup_{w\in \Omega_{r,2}}\frac{1}{|w|}\left|\sum_{m \in w} a_m\right|\right\}\\
& \le \max\left\{\sup_{w\in \Omega_{r}}\frac{1}{|w|} \left|\sum_{m\in w } a_m\right|
+  \sup_{w\in \Omega_{r-1}}\frac{1}{|w|} \left|\sum_{m\in w } a_m\right|,
\sup_{w\in \Omega_{r}}\frac{1}{|w|}\left|\sum_{m \in w} a_m\right|\right\}\\
& \le \max\left\{\sup_{w\in \Omega_{r}}\frac{1}{|w|} \left|\sum_{m\in w } a_m\right|
+  5\sup_{w\in \Omega_{r}}\frac{1}{|w|} \left|\sum_{m\in w } a_m\right|,
\sup_{w\in \Omega_{r}}\frac{1}{|w|}\left|\sum_{m \in w} a_m\right|\right\} = 6\widetilde{a}_{2^r}.
\end{split}
\end{equation*}
By similar arguments, we obtain $\widetilde{b}_1 \le 2\widetilde{a}_1$.
This completes the proof.
\end{proof}


The following lemma follows from \cite[Assertions 1, 2]{Nu}.
\begin{lemma}\label{netlesslorentz}
Let $1 < p \le \infty$, $1 < q \le \infty$. Then
$$
\|a\|_{n_{p,q}} \lesssim \|a\|_{l_{p,q}}.
$$
\end{lemma}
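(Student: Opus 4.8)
The plan is to majorize the net average $\widetilde a_k$ pointwise by the Calderón average of the nonincreasing rearrangement, and then to appeal to the weighted discrete Hardy inequality, whose hypothesis is satisfied exactly when $p>1$.

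First I would observe that for every interval $w\in\mcw$ with $|w|\ge k$,
$$
\frac{1}{|w|}\Big|\sum_{m\in w}a_m\Big|\le \frac{1}{|w|}\sum_{m\in w}|a_m|\le \frac{1}{|w|}\sum_{j=1}^{|w|}a^*(j),
$$
the last step holding because any $|w|$ of the numbers $|a_m|$ sum to at most the $|w|$ largest of them. Setting $A_N:=\frac1N\sum_{j=1}^N a^*(j)$ and using that the running mean of a nonincreasing sequence is itself nonincreasing, one gets $\sup_{N\ge k}A_N=A_k$, whence
$$
\widetilde a_k\le A_k=\frac1k\sum_{j=1}^k a^*(j),\qquad k\ge 1.
$$

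For $1<q<\infty$ this reduces the lemma to
$$
\sum_{k=1}^\infty k^{\frac qp-1}A_k^q\ \lesssim\ \sum_{k=1}^\infty k^{\frac qp-1}\big(a^*(k)\big)^q\ \asymp\ \|a\|_{l_{p,q}}^q,
$$
the final comparison being the usual discretization of the Lorentz integral. The first inequality is precisely the weighted discrete Hardy inequality for the averaging operator $\{c_j\}\mapsto\{\frac1k\sum_{j\le k}c_j\}$ with weight $k^{\frac qp-1}$, which holds for nonnegative $\{c_j\}$ provided the exponent satisfies $\frac qp-1<q-1$, i.e. $p>1$; this is exactly our hypothesis. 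For $q=\infty$ the same bound $\widetilde a_k\le A_k$ reduces the claim to $\sup_k k^{1/p}A_k\lesssim \sup_k k^{1/p}a^*(k)$, the $l_{p,\infty}$-boundedness of the Hardy operator, which again follows directly from $p>1$ by estimating the partial sum $\sum_{j\le k}a^*(j)$.

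The main obstacle is the pointwise majorization $\widetilde a_k\le A_k$: one must both pass from the signed sum over an arbitrary interval to the sum of the largest $|w|$ absolute values, and observe that the supremum over all $|w|\ge k$ collapses to the single value $A_k$ by monotonicity of the Calderón average. Once this is in place, the weighted Hardy inequality is classical and the remaining exponent bookkeeping is routine.
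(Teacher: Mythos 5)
Your argument is correct. Note first that the paper does not actually prove this lemma: it simply cites \cite[Assertions 1, 2]{Nu}, so there is no internal proof to compare against line by line. Your route --- the pointwise majorization $\widetilde a_k\le A_k=\frac1k\sum_{j\le k}a^*(j)$ (triangle inequality, then the fact that any $|w|$ of the $|a_m|$ are dominated by the $|w|$ largest, then monotonicity of the Calder\'on average to collapse the supremum over $|w|\ge k$), followed by the weighted discrete Hardy inequality with weight $k^{q/p-1}$, whose admissibility condition $q/p-1<q-1$ is exactly $p>1$ --- is the standard and essentially only natural proof of this embedding, and is presumably what the cited assertions of Nursultanov encapsulate. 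All the individual steps check out: $A_{N+1}\le A_N$ because $a^*(N+1)\le A_N$; the discretization $\|a\|_{l_{p,q}}^q\asymp\sum_k k^{q/p-1}(a^*(k))^q$ is harmless since $q/p>0$; and the $q=\infty$ case reduces to $\sum_{j\le k}a^*(j)\lesssim k^{1-1/p}\sup_j j^{1/p}a^*(j)$, which needs only $1/p<1$. The one cosmetic remark is that for $p=\infty$ the statement is nontrivial only when $q=\infty$ (the paper's $n_{p,q}$ is defined for $q<\infty$ only when $p<\infty$, and $l_{\infty,q}$ is degenerate otherwise), where the claim reduces to $\widetilde a_k\le a^*(1)$; your argument covers this but it is worth flagging that the ``Hardy inequality'' step is vacuous there. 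In short: a correct, self-contained proof of a fact the paper outsources to a reference.
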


We also need the following known Hardy type inequalities 
(see, for instance, \cite[Lemma 2.5]{PoSiTi}).
\begin{lemma}[Hardy inequalities]
Let $1 < q < \infty$, $0 < \alpha < 1$ and let $\{a_n\}_{n=0}^{\infty}$ be a nonnegative sequence.
Then 
\begin{equation}\label{Hardy1}
\left(\sum_{k=0}^{\infty} \left(2^{\alpha k} \sum_{m=k}^{\infty} a_m\right)^q\right)^{\frac{1}{q}}
\lesssim \left(\sum_{k=0}^{\infty} \left(2^{\alpha k} a_k\right)^q\right)^{\frac{1}{q}},
\end{equation}
\begin{equation}\label{Hardy2}
\left(\sum_{k=0}^{\infty} \left(2^{(\alpha - 1) k} \sum_{m=0}^{k} 2^m a_m\right)^q\right)^{\frac{1}{q}}
\lesssim \left(\sum_{k=0}^{\infty} \left(2^{\alpha k} a_k\right)^q\right)^{\frac{1}{q}}.
\end{equation}
\end{lemma}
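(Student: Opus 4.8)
The plan is to reduce both inequalities to the boundedness on $\ell_q$ of a one-sided convolution operator whose kernel is a geometric sequence. Throughout, set $b_k = 2^{\alpha k}a_k$, so that $b_k \ge 0$ and the right-hand side of each inequality is exactly $\|b\|_{\ell_q} := \bigl(\sum_{k=0}^{\infty} b_k^q\bigr)^{1/q}$. It then suffices to bound each left-hand side by a fixed constant multiple of $\|b\|_{\ell_q}$. The key device is Minkowski's inequality (the triangle inequality for the $\ell_q$-norm), applied to the shifted copies of $\{b_k\}$ that appear after the substitution; the geometric decay of the kernel, which is what makes the relevant series of $\ell_q$-norms converge, is precisely what the constraints on $\alpha$ guarantee.

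For \eqref{Hardy1} I would write $a_m = 2^{-\alpha m}b_m$ and substitute $j = m-k$, which gives $2^{\alpha k}\sum_{m=k}^{\infty}a_m = \sum_{j=0}^{\infty} 2^{-\alpha j}b_{k+j}$. Thus the left-hand side is the $\ell_q$-norm of the convolution of $\{b_k\}$ with the one-sided kernel $2^{-\alpha j}$, $j\ge 0$. Since a forward shift does not increase the $\ell_q$-norm, Minkowski's inequality yields
\[
\Bigl(\sum_{k=0}^{\infty}\Bigl(\sum_{j=0}^{\infty}2^{-\alpha j}b_{k+j}\Bigr)^q\Bigr)^{1/q}
\le \sum_{j=0}^{\infty}2^{-\alpha j}\,\|b\|_{\ell_q}
= \frac{\|b\|_{\ell_q}}{1-2^{-\alpha}},
\]
and the geometric series converges exactly because $\alpha > 0$. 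Unwinding the substitution gives \eqref{Hardy1} with an explicit constant.

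For \eqref{Hardy2} the same substitution gives $2^m a_m = 2^{(1-\alpha)m}b_m$, so setting $j = k-m$ one obtains $2^{(\alpha-1)k}\sum_{m=0}^{k}2^m a_m = \sum_{j=0}^{k}2^{-(1-\alpha)j}b_{k-j}$. This is again a convolution of $\{b_k\}$, now with the one-sided kernel $2^{-(1-\alpha)j}$, $j\ge 0$. Applying Minkowski's inequality in the same way produces the bound $\bigl(1-2^{-(1-\alpha)}\bigr)^{-1}\|b\|_{\ell_q}$, where convergence of the geometric series now uses $\alpha < 1$.

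The argument is essentially routine once the convolution structure is recognized, so there is no serious obstacle; the only point demanding care is bookkeeping of the hypotheses — one must verify that $\alpha>0$ is the condition making the tail kernel in \eqref{Hardy1} summable, while the complementary bound $\alpha<1$ is what makes the partial-sum kernel in \eqref{Hardy2} summable, so that neither inequality holds throughout the full range without the corresponding constraint. An equivalent route would be the discrete Schur test, observing that both displayed kernels have uniformly bounded row and column sums and hence define operators bounded on $\ell_q$ for every $1 \le q \le \infty$; I would nevertheless prefer the Minkowski computation above, as it delivers the explicit constants directly and works verbatim for all $q \ge 1$.
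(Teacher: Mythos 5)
Your proof is correct. Note that the paper does not actually prove this lemma: it is quoted as a known result with a reference to \cite[Lemma 2.5]{PoSiTi}, so there is no in-paper argument to compare against. Your reduction --- substituting $b_k = 2^{\alpha k}a_k$, recognizing each left-hand side as the $\ell_q$-norm of a one-sided convolution of $\{b_k\}$ with a geometric kernel ($2^{-\alpha j}$ for \eqref{Hardy1}, $2^{-(1-\alpha)j}$ for \eqref{Hardy2}), and applying Minkowski's inequality together with the fact that shifts do not increase the $\ell_q$-norm --- is the standard and entirely rigorous way to prove these discrete Hardy inequalities. Both index changes check out, the constants $(1-2^{-\alpha})^{-1}$ and $(1-2^{-(1-\alpha)})^{-1}$ are correct, and your bookkeeping of which hypothesis ($\alpha>0$ versus $\alpha<1$) drives which geometric series is accurate. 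Your argument is in fact slightly more general than what the lemma asserts, since it covers the endpoint cases $q=1$ and $q=\infty$ as well; this is harmless. Nothing is missing.
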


\section{The estimates of the net spaces norms of the Fourier coeffients and their applications}\label{Nursultanov-inequality}

The following Fourier inequality for the case  $p \ge 2$ 
was established in \cite[Theorem 3]{Nu}. 
For the reader's convenience, we provide a full proof of this Theorem.
\begin {theorem}\label{T1}
Let $ 1<p<\infty$, $p'=p/(p-1)$ and  $f\in L_p([-\pi, \pi])$ be a function
with Fourier series
 $\sum\limits_{k=-\infty}^{\infty} a_{k} e^{ikx}$. Then  
\begin{equation}\label{nurs-ineq}
\|a\|_{n_{ p',p}} \lesssim 
\|f\|_{L_p([-\pi, \pi])}.
\end{equation}
\end {theorem}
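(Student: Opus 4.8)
The plan is to prove a single \emph{weak endpoint} estimate that is valid at \emph{every} exponent, and then recover the full statement by real interpolation. Concretely, I would first show that for each fixed $r\in(1,\infty)$ one has the weaker bound into $n_{r',\infty}$, namely
$$
\|a\|_{n_{r',\infty}}=\sup_{L\ge1}L^{1/r'}\widetilde a_L\lesssim\|f\|_{L_r([-\pi,\pi])}.
$$
To this end, fix an interval $w\in\mcw$ with $|w|=M$ and write $\frac{1}{M}\sum_{m\in w}a_m=\frac{1}{2\pi M}\int_{-\pi}^{\pi}f(x)\overline{D_w(x)}\,dx$, where $D_w(x)=\sum_{m\in w}e^{imx}$. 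The crucial point is that $|D_w(x)|=|\sin(Mx/2)|/|\sin(x/2)|$ depends only on $M=|w|$ and \emph{not} on the position of $w$, so Hölder's inequality gives $\frac{1}{M}\big|\sum_{m\in w}a_m\big|\le\frac{1}{2\pi M}\|f\|_{L_r}\|D_w\|_{L_{r'}}$. A direct computation, splitting the integral at $|x|\asymp1/M$, yields $\|D_w\|_{L_{r'}}\asymp M^{1/r}$ for $1<r<\infty$, whence $\frac{1}{M}\big|\sum_{m\in w}a_m\big|\lesssim\|f\|_{L_r}M^{-1/r'}$. Taking the supremum over all $w$ with $|w|=M\ge L$ gives $\widetilde a_L\lesssim\|f\|_{L_r}L^{-1/r'}$, which is exactly the claimed endpoint bound.

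The second step is interpolation. The Fourier-coefficient map $T\colon f\mapsto a=\{a_k\}$ is linear, and by Step~1 it is bounded $T\colon L_{p_0}\to n_{p_0',\infty}$ and $T\colon L_{p_1}\to n_{p_1',\infty}$ for any chosen $1<p_0<p<p_1<\infty$. Picking $\theta\in(0,1)$ with $1/p=(1-\theta)/p_0+\theta/p_1$ and applying the real interpolation functor $(\cdot)_{\theta,p}$, boundedness passes to the interpolation spaces:
$$
T\colon(L_{p_0},L_{p_1})_{\theta,p}\longrightarrow(n_{p_0',\infty},n_{p_1',\infty})_{\theta,p}.
$$
Since $(L_{p_0},L_{p_1})_{\theta,p}=L_{p,p}=L_p$, and since the net spaces interpolate exactly as the Lorentz scale does, namely $(n_{p_0',\infty},n_{p_1',\infty})_{\theta,p}=n_{p',p}$ (here $1/p'=(1-\theta)/p_0'+\theta/p_1'$ is forced by the relation defining $\theta$), we obtain $\|a\|_{n_{p',p}}\lesssim\|f\|_{L_p}$, which is \eqref{nurs-ineq}.

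I expect the main obstacle to be the net-space interpolation identity $(n_{p_0',\infty},n_{p_1',\infty})_{\theta,p}=n_{p',p}$, since the analytic content (the Dirichlet-kernel estimate of Step~1) is routine. I would establish it by reducing each net norm to a weighted sequence norm: by the dyadic-discretization lemma, $\|a\|_{n_{r,q}}\asymp\big(\sum_{k\ge0}(2^{k/r}\widetilde a_{2^k})^q\big)^{1/q}$, so that the map $a\mapsto(\widetilde a_{2^k})_{k\ge0}$ realizes $n_{r,q}$ as a retract of a weighted $\ell_q$-space; the classical real-interpolation identities for weighted $\ell_q$ (equivalently, for the Lorentz scale) then give the claim. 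The almost-monotonicity of $\{\widetilde a_{2^k}\}$ from Lemma~\ref{alm_mon} is precisely what legitimizes this discretization.

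Alternatively — and this is presumably why the Hardy inequalities \eqref{Hardy1}--\eqref{Hardy2} are recorded above — one can bypass abstract interpolation entirely: decompose $f$ into dyadic frequency blocks, apply the single-scale Hölder bound of Step~1 to each block together with a Littlewood--Paley square-function estimate, and then sum the resulting geometric series using \eqref{Hardy1}--\eqref{Hardy2}. This route is more computational but self-contained, and it also transparently explains why the target second index is $p$ rather than $\infty$.
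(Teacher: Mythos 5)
Your proposal follows essentially the same route as the paper's proof: a single-scale endpoint bound $\widetilde a_L\lesssim \|f\|\,L^{-1/r'}$ derived from the Dirichlet-kernel estimate $|D_w(x)|\lesssim\min(|w|,1/|x|)$, followed by real interpolation between two such endpoints via the interpolation theorem for net spaces (which the paper does not reprove but cites as \cite{Nu}, Theorem 1). The only cosmetic difference is that you obtain the endpoint by H\"older's inequality in the form $L_r\to n_{r',\infty}$, whereas the paper uses the rearrangement inequality to get the restricted form $L_{r,1}\to n_{r',\infty}$ and then applies Marcinkiewicz-type interpolation; both versions feed the same interpolation step and yield \eqref{nurs-ineq}.
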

\begin{proof}
For an arbitrary interval  $w \subset \mathbb{Z}$, 
let  $D_w(x) = \sum\limits_{m \in w} e^{-imx}$
denote the corresponding Dirichlet kernel.
 
Let $1 < q < \infty$, then the standard calculation gives
\begin{equation*}
\begin{split}
\frac{1}{|w|^{\frac{1}{q}}} \left|\sum_{m \in w} a_m\right|  & = 
\frac{1}{2\pi|w|^{\frac{1}{q}}} \left|\sum_{m \in w} \int_{-\pi}^{\pi} f(x) e^{-imx} dx\right|\\
& \le \frac{1}{2\pi|w|^{\frac{1}{q}}} \int_{-\pi}^{\pi} |f(x)||D_w(x)|dx 
\le \frac{1}{2\pi|w|^{\frac{1}{q}}} \int_{-\pi}^{\pi} |f(x)|\left|\frac{2\sin \frac{|w| x}{2}}{\sin  \frac{x}{2}}\right|dx\\ 
& \lesssim \frac{1}{|w|^{\frac{1}{q}}} \int_{-\pi}^{\pi} |f(x)|\min \left(|w|, \frac{1}{|x|}\right) dx
= \int_{-\pi}^{\pi} |f(x)|\varphi_w(x)dx. 
\end{split}
\end{equation*}
Let us estimate function $\varphi_w(x)$. If $|w| \ge \frac{1}{|x|}$, then
$$
\varphi_w(x) = \frac{1}{|x| |w|^{\frac{1}{q}}} =
\frac{1}{|x|^{\frac{1}{q'}} (|x||w|)^{\frac{1}{q}}} \le \frac{1}{|x|^{\frac{1}{q'}}}.   
$$
If $|w| < \frac{1}{|x|}$, then
$$
\varphi_{w}(x) = \frac{|w|}{|w|^{\frac{1}{q}}} = |w|^{\frac{1}{q'}} \le \frac{1}{|x|^{\frac{1}{q'}}}.
$$
Therefore, by using the rearrangement inequality, we get
\begin{equation}\label{in1}
\frac{1}{|w|^{\frac{1}{q}}} \left|\sum_{m \in w} a_m\right|
\le \int_{-\pi}^{\pi} |f(x)| |x|^{-\frac{1}{q'}} dx \le \int_0^{2\pi} f^*(t) t^{-\frac{1}{q'}} dt = \|f\|_{L_{q,1}([-\pi,\pi])}.
\end{equation}
On the other hand, we have
\begin{equation}\label{in2}
\|a\|_{n_{q', \infty}} = \sup_{k \in \mathbb{N}} k^{\frac{1}{q'}} \widetilde{a}_k \le
 \sup_{k \in \mathbb{N}} \sup_{\stackrel{|w| \ge k}{w \in \mcw}} \frac{1}{|w|^{\frac{1}{q}}}\left|\sum_{m \in w} a_m\right|
= \sup_{w \in \mcw}  \frac{1}{|w|^{\frac{1}{q}}}\left|\sum_{m \in w} a_m\right|.
\end{equation}
Combining inequalities \eqref{in1}, \eqref{in2}, we conclude that 
\begin{equation}\label{in3}
\|a\|_{n_{q', \infty}} \le \|f\|_{L_{q,1}([-\pi,\pi])}.
\end{equation}
Consider operator $T: f \mapsto  \{a_k\}_{k \in \mathbb{Z}}$, 
where $a_k = \frac{1}{2\pi}\int_{-\pi}^{\pi} f(x) e^{-ikx} dx$. Inequality \eqref{in3}
implies the boundedness 
$$
T : L_{q,1}([-\pi,\pi]) \to n_{q', \infty},  \quad 1 <  q <  \infty.
$$
Let now $p \in (1, \infty)$, then for $1 < p_0 < p < p_1 < \infty$, we have the boundedness 
$$
T : L_{p_i,1}([-\pi,\pi]) \to n_{p_i^{'}, \infty}, \quad i = 0,1.
$$
By interpolation, using Marcinkiewicz's theorem \cite[Section 5.3]{BeLo} and the interpolation
theorem of the net spaces \cite[Theorem 1]{Nu}, we obtain the boundedness
$$
T : L_{p, \tau}([-\pi,\pi]) \to n_{p', \tau},  \quad 1 < \tau < \infty,
$$
i.e.,
$$
\|a\|_{n_{p', \tau}} \le C\|f\|_{L_{p, \tau}([-\pi,\pi])}, \quad 1 < \tau < \infty,
$$
in particular, this gives the estimate
$$
\|a\|_{n_{p', p}} \le C\|f\|_{L_{p}([-\pi,\pi])}.
$$
\end{proof}

\begin{remark}
Inequality \eqref{nurs-ineq} was first obtained by E. Nursultanov in \cite[Theorem 3]{Nu}.
The main advantage of this inequality, compared to other Fourier inequalities, 
is  its validity for all $1 < p < \infty$ without imposing additional 
conditions  on the function or  its Fourier coefficients.
Later, similar inequalities were derived in many papers; 
see, for instance, \cite{DyNu, DyNuKa, DyNuTiWe, KoNuPe, Nu98East, NuTi}.

A weaker form of this inequality was earlier obtained 
by Y. Sagher in \cite[Theorem 2.4]{Sa} (see also \cite{DyMuTi, GrSaSa, Og}).


In the case when $1 < p < 2$, inequality \eqref{nurs-ineq} follows from the inequality  
$\|a\|_{l_{p'p}} \lesssim \|f\|_{L_p}$ (see \cite[Theorem 4.3]{Hu}) and Lemma \ref{netlesslorentz}.
  
\end{remark}

Let $1 < p < \infty$, for an integrable function $f(x)\sim \sum\limits_{k=-\infty}^{\infty} a_{k} e^{ikx}$,
define
$$
I_p(f) := \left(\sum_{k=0}^\infty \left(2^{\frac{k}{p'}}\Theta_{k}(f)\right)^p\right)^{1/p},
$$
where
$$
\Theta_{k}(f) = \sum\limits_{[2^{k-1}] \le |m| < 2^{k}}\left|\Delta a_{m}\right|,  \quad k \ge 0.
$$

\begin{lemma}\label{L1}
Let  $f({x})$ be an integrable on $[-\pi,\pi]$ function with Fourier series     
$\sum\limits_{k=-\infty}^{\infty} a_{k} e^{ikx}$.
Let also
$$
S_N(f) =\sum_{m=-2^N}^{2^N} a_{m}e^{imx}, \quad N \in \mathbb{N}.
$$
Then, for any $1 < p < \infty$ and $N \in \mathbb{N}$,
$$
I_p(S_N(f)) \le 2^{\frac{1}{p'}}I_p(f).
$$
\end{lemma}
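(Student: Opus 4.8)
The plan is to work directly with the Fourier coefficients of $S_N(f)$, namely the truncated sequence $b_m = a_m$ for $|m| \le 2^N$ and $b_m = 0$ for $|m| > 2^N$, and to track how truncation changes the quantities $|\Delta b_m|$ relative to $|\Delta a_m|$. First I would check three regimes. For $|m| < 2^N$ both the index $m$ and the neighbouring index entering $|\Delta b_m|$ lie in $[-2^N, 2^N]$ (here one uses $N \ge 1$, so that $2^N \ge 2$ and the values at $m = 0, \pm 1$ are unaffected), whence $|\Delta b_m| = |\Delta a_m|$; for $|m| > 2^N$ both values vanish, so $|\Delta b_m| = 0$; and the only new contributions appear at $m = \pm 2^N$, where $|\Delta b_{\pm 2^N}| = |a_{\pm 2^N} - 0| = |a_{\pm 2^N}|$. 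Summing over the dyadic blocks $2^{k-1} \le |m| < 2^k$, this yields $\Theta_k(S_N(f)) = \Theta_k(f)$ for $0 \le k \le N$, the single boundary value $\Theta_{N+1}(S_N(f)) = |a_{2^N}| + |a_{-2^N}|$ (the point $|m| = 2^N$ sitting at the left end of the $k = N+1$ block), and $\Theta_k(S_N(f)) = 0$ for $k \ge N+2$.

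Consequently $I_p(S_N(f))^p$ splits as
$$
I_p(S_N(f))^p = \sum_{k=0}^N \left(2^{k/p'}\Theta_k(f)\right)^p + \left(2^{(N+1)/p'}\big(|a_{2^N}| + |a_{-2^N}|\big)\right)^p,
$$
in which the first sum is trivially at most $I_p(f)^p$. The step I expect to be the main obstacle is controlling the lone boundary term, since $\Theta_{N+1}(S_N(f))$ bears no direct relation to $\Theta_{N+1}(f)$. Here I would exploit the integrability of $f$: by the Riemann–Lebesgue lemma $a_m \to 0$ as $|m| \to \infty$, so the telescoping identities $a_{2^N} = \sum_{j \ge 2^N}(a_j - a_{j+1})$ and $a_{-2^N} = \sum_{j \le -2^N}(a_j - a_{j-1})$ converge and, recalling the sign conventions in the definition of $|\Delta a_m|$, give $|a_{2^N}| + |a_{-2^N}| \le \sum_{|j| \ge 2^N}|\Delta a_j| = \sum_{k=N+1}^\infty \Theta_k(f)$.

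Finally I would recover the exact constant $2^{1/p'}$ by a weighted Hölder estimate tuned to the geometry of the weights. Writing
$$
2^{(N+1)/p'}\sum_{k=N+1}^\infty \Theta_k(f) = \sum_{k=N+1}^\infty 2^{(N+1-k)/p'}\cdot 2^{k/p'}\Theta_k(f)
$$
and applying Hölder with exponents $p'$ and $p$, the geometric weights contribute the factor $\big(\sum_{k\ge N+1} 2^{N+1-k}\big)^{1/p'} = 2^{1/p'}$, so the boundary term $2^{(N+1)/p'}\Theta_{N+1}(S_N(f))$ is at most $2^{1/p'}\big(\sum_{k\ge N+1}(2^{k/p'}\Theta_k(f))^p\big)^{1/p}$. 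Raising this to the $p$-th power and inserting it into the splitting above yields $I_p(S_N(f))^p \le \sum_{k=0}^N(\cdots) + 2^{p/p'}\sum_{k\ge N+1}(\cdots) \le 2^{p/p'}I_p(f)^p$, where $(\cdots)$ abbreviates $(2^{k/p'}\Theta_k(f))^p$ and the last bound uses $2^{p/p'}\ge 1$ to absorb the first sum into the full series $I_p(f)^p$. Taking $p$-th roots gives the claim. The only genuinely delicate points are the boundary bookkeeping at $|m| = 2^N$ and the observation that the Hölder geometric series sums to exactly $2$, which is precisely what produces the stated sharp constant $2^{1/p'}$ rather than a generic implicit one.
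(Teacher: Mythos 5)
Your proposal is correct and follows essentially the same route as the paper: the same splitting of $I_p(S_N(f))^p$ into the blocks $k\le N$ plus the single boundary term $2^{(N+1)/p'}(|a_{2^N}|+|a_{-2^N}|)$, the same telescoping bound $|a_{\pm 2^N}|\le\sum_{k\ge N+1}\Theta_k(f)$, and the same H\"older step with geometric weights producing the constant $2^{1/p'}$. The only additions are your explicit boundary bookkeeping and the invocation of Riemann--Lebesgue to justify the telescoping identities, both of which the paper leaves implicit.
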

\begin{proof} 

For any $N\in  \mathbb{N}$, we get
\begin{equation*}
\begin{split}
& \left(I_p(S_N(f))\right)^p  = \sum_{k=0}^{N} \left(2^{\frac{k}{p'}}\Theta_{k}(f)\right)^p 
+ \left(2^{\frac{N+1}{p'}}\left(\left| a_{2^N}\right| + \left| a_{-2^N}\right|\right)\right)^p \\
& =
\sum_{k=0}^{N} \left(2^{\frac{k}{p'}}\Theta_{k}(f)\right)^p 
+ \left(2^{\frac{N+1}{p'}}\left(\left| \sum_{k=2^N}^\infty\Delta a_k\right| + 
\left| \sum_{k=-\infty}^{-2^N}\Delta a_k\right|\right)\right)^p\\ 
& \le \sum_{k=0}^{N} \left(2^{\frac{k}{p'}}\Theta_{k}(f)\right)^p 
+ \left(2^{\frac{N+1}{p'}}\left( \sum_{k=2^N}^\infty\left|\Delta a_k\right| + 
 \sum_{k=-\infty}^{-2^N}\left|\Delta a_k\right|\right)\right)^p \\
& = \sum_{k=0}^{N} \left(2^{\frac{k}{p'}}\Theta_{k}(f)\right)^p 
+ \left(2^{\frac{N+1}{p'}}\left( \sum_{k=N}^\infty\sum_{m=2^k}^{2^{k+1}-1}\left|\Delta a_m\right| + 
 \sum_{k=N}^{\infty}\sum_{m=-(2^{k+1} -1)}^{-2^k}\left|\Delta a_m\right|\right)\right)^p \\
& =
\sum_{k=0}^{N} \left(2^{\frac{k}{p'}}\Theta_{k}(f)\right)^p 
+ \left(2^{\frac{N+1}{p'}} \sum_{k=N+1}^\infty \Theta_k(f)\right)^p. \\
\end{split} 
\end{equation*}

Using H\"older's inequality, we obtain
\begin{equation*}
\begin{split}
\left(I_p(S_N(f))\right)^p   & \le  \sum_{k=0}^{N} \left(2^{\frac{k}{p'}}\Theta_{k}(f)\right)^p + \\
& + \left(2^{\frac{N+1}{p'}} \left(\sum_{k=N+1}^\infty \left(2^{\frac{k}{p'}}\Theta_k(f)\right)^p\right)^{\frac{1}{p}}\left(\sum_{k=N+1}^\infty \left(2^{-\frac{k}{p'}}\right)^{p'}\right)^{\frac{1}{p'}}\right)^p\\
& = \sum_{k=0}^{N} \left(2^{\frac{k}{p'}}\Theta_{k}(f)\right)^p 
+ 2^{\frac{p}{p'}} \sum_{k=N+1}^\infty \left(2^{\frac{k}{p'}}\Theta_k(f)\right)^p
 \le 2^{\frac{p}{p'}} \sum_{k=0}^{\infty} \left(2^{\frac{k}{p'}}\Theta_{k}(f)\right)^p.\\
\end{split}
\end{equation*}

Therefore,
$$
I_p(S_N(f)) \le 2^{\frac{1}{p'}}I_p(f).
$$
\end{proof}
\begin{theorem}\label{T4}
Let $1<p<\infty$, and $f \in L_1([-\pi,\pi])$ with Fourier series $\sum\limits_{k=-\infty}^{\infty} a_{k} e^{ikx}$.
Let
$$
I_p(f) < \infty,
$$
then $f\in L_p([-\pi, \pi])$  and, moreover, the following inequality holds:
\begin{equation}\label{f2}
\|f\|_{L_p} \lesssim I_p(f).
\end{equation}
\end{theorem}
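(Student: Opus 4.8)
The plan is to reduce the statement to trigonometric polynomials and then prove the polynomial estimate by a pointwise majorization combined with the Hardy inequalities \eqref{Hardy1}, \eqref{Hardy2}. First I would show that it suffices to prove $\|S_N(f)\|_{L_p}\lesssim I_p(S_N(f))$ for every $N$: granting this, Lemma \ref{L1} gives $\|S_N(f)\|_{L_p}\lesssim I_p(S_N(f))\le 2^{1/p'}I_p(f)$ uniformly in $N$, and since $p>1$ a weak-$L_p$ compactness argument (the Fourier coefficients of $S_N(f)$ stabilize to those of $f$, so any weak limit must be $f$) yields $f\in L_p$ with $\|f\|_{L_p}\le\liminf_N\|S_N(f)\|_{L_p}\lesssim I_p(f)$. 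Thus the whole problem is the polynomial inequality.

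For a polynomial $P=S_N(f)$ I would decompose it into dyadic blocks $\delta_k(x)=\sum_{[2^{k-1}]\le|m|<2^k}a_m e^{imx}$, so that $P=\sum_{k\ge 0}\delta_k$ is a finite sum. On each block I apply summation by parts to the positive- and negative-frequency parts separately, writing $\delta_k$ as a combination of partial Dirichlet sums $\sum_{j=a}^{b}e^{ijx}$ whose coefficients are the differences $\Delta a_m$, together with one boundary term carrying the end coefficient. Using the elementary bound $\bigl|\sum_{j=a}^b e^{ijx}\bigr|\lesssim\min(b-a+1,1/|x|)$ together with the fact that, the end coefficient satisfies $|a_{2^k-1}|\le\sum_{j\ge 2^k-1}|\Delta a_j|\le T_k$, where $T_k:=\sum_{l\ge k}\Theta_l(P)$, I obtain the pointwise estimate $|\delta_k(x)|\lesssim T_k\min(2^k,1/|x|)$. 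Summing over $k$ gives the majorant $|P(x)|\le M(x):=\sum_{k}T_k\min(2^k,1/|x|)$.

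It then remains to estimate $\|M\|_{L_p}$. Splitting $[-\pi,\pi]$ into dyadic shells $2^{-j-1}\le|x|<2^{-j}$, on which $\min(2^k,1/|x|)\asymp 2^{\min(k,j)}$ and the measure is $\asymp 2^{-j}$, I get $\|M\|_{L_p}^p\lesssim\sum_j 2^{-j}\bigl(\sum_{k\le j}2^k T_k+2^j\sum_{k>j}T_k\bigr)^p$. The first summand is handled by \eqref{Hardy2} and the second by \eqref{Hardy1}, both with $\alpha=1/p'\in(0,1)$, which gives $\|M\|_{L_p}^p\lesssim\sum_k 2^{k(p-1)}T_k^p$. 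Finally, applying \eqref{Hardy1} once more (again with $\alpha=1/p'$) to replace the tails $T_k=\sum_{l\ge k}\Theta_l(P)$ by $\Theta_k(P)$ produces $\|M\|_{L_p}^p\lesssim\sum_k 2^{k(p-1)}\Theta_k(P)^p=I_p(P)^p$, the desired bound for the polynomial.

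The step I expect to be the main obstacle is the pointwise block estimate, specifically the bookkeeping of the boundary terms of the Abel transform: the end coefficients are not controlled by $\Theta_k(P)$ alone but only by the tail $T_k$, so the majorization is lossy by factors polynomial in $k$. The reason this does no harm is that the geometric weights in the Hardy inequalities (valid precisely because $\alpha=1/p'\in(0,1)$) absorb these polynomial losses; this is exactly why three applications of \eqref{Hardy1}, \eqref{Hardy2} suffice and why the argument works uniformly over the whole range $1<p<\infty$ without splitting at $p=2$. The remaining points — justifying the Abel rearrangement (finite for polynomials), the shell decomposition, and the limiting/compactness step — are routine.
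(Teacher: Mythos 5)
Your argument is correct, but it takes a genuinely different route from the paper. You prove the polynomial estimate by a direct pointwise majorization: Abel summation inside each dyadic block $\delta_k$, the Dirichlet-kernel bound $\bigl|\sum_{j=a}^{b}e^{ijx}\bigr|\lesssim\min(2^k,1/|x|)$, control of the boundary coefficient by the tail $T_k=\sum_{l\ge k}\Theta_l$, and then a shell decomposition of $[-\pi,\pi]$ handled by three applications of the Hardy inequalities \eqref{Hardy1}--\eqref{Hardy2} with $\alpha=1/p'$. The paper instead argues by duality: it writes $\|f\|_{L_p}=\sup_{\|g\|_{L_{p'}}=1}\bigl|\sum_k a_k\overline{b}_k\bigr|$, performs the Abel summation on the bilinear pairing rather than on $f$ itself, bounds the resulting partial sums of $\{b_k\}$ by the averages $\widetilde{b}_{2^r}$, and closes with H\"older together with Nursultanov's net-space Fourier inequality $\|b\|_{n_{p,p'}}\lesssim\|g\|_{L_{p'}}$ (Theorem \ref{T1}, itself proved by interpolation). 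Your proof is more elementary and self-contained --- it needs neither Theorem \ref{T1} nor interpolation, and it works uniformly on $1<p<\infty$ because $1/p'\in(0,1)$ --- and it is in the spirit of the classical Hardy--Littlewood/Dyachenko--Tikhonov sufficiency arguments that the paper cites in the remark after Theorem \ref{T4}. What the duality route buys the authors is reusability: the same pairing computation is recycled verbatim in the proof of Theorem \ref{T5} to show $J_p^*(f)\lesssim I_p(f)$, which your pointwise majorant does not directly give. Two minor points in your write-up are fine as stated but worth making explicit if you expand it: the bound $|a_{2^k-1}|\le\sum_{j\ge 2^k-1}|\Delta a_j|$ uses that the coefficients of a polynomial vanish eventually, and your weak-compactness passage to the limit is actually cleaner than the paper's appeal to $\|f-S_N(f)\|_{L_p}\to 0$, which tacitly presupposes $f\in L_p$.
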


\begin{proof} First, consider case where $f$ 
is a trigonometric polynomial, i.e.,  
$f({x})= \sum\limits_{k=-\infty}^{\infty} a_{k} e^{ikx}$, 
where only a finite number of coefficients are nonzero. 
Using Parseval's equality 
\begin{equation}\label{in11}
\|f\|_{L_p} =\sup_{\|g\|_{L_{p'}}=1}\left|\int_{[-\pi,\pi]}f({x})\overline{g({x})}{dx} \right|= 
\sup_{\|g\|_{L_{p'}}=1}\left|\sum_{k=-\infty}^{\infty} a_{k}\overline{b}_{k}\right|,
\end{equation}
where $\{b_{k}\}_{k=-\infty}^{\infty}$ is the sequence of   Fourier coefficients of the function $g$,   
$\overline{g({x})}$ is the complex conjugate  of $g(x)$,  
$\{\overline{b}_{k}\}_{k=-\infty}^{\infty}$ is the complex conjugate 
of the  sequence $\{b_{k}\}_{k=-\infty}^{\infty}$.
Let us define
$$
d_k = 
\begin{cases}
b_k, & \text{for} \ k \neq 0;\\
\frac{1}{2}b_0, & \text{for} \ k = 0.
\end{cases}
$$

Since the sequence $\{a_k\}_{k=-\infty}^{\infty}$ contains only  finitely 
many nonzero elements, we have
\begin{equation*}
\begin{split}
\sum_{k=-\infty}^{\infty} a_{k}\overline{b}_{k} & =
\sum_{k=-\infty}^{-1} a_{k}\overline{b}_{k} + \frac{1}{2}a_0\overline{b}_0 + 
\frac{1}{2}a_0\overline{b}_0 + \sum_{k=1}^{\infty} a_{k}\overline{b}_{k}
= \sum_{k=-\infty}^{0} a_{k}\overline{d}_{k} +  
\sum_{k=0}^{\infty} a_{k}\overline{d}_{k}\\
& =\sum_{k=-\infty}^{0}\overline{d}_{k}\sum_{m= -\infty}^{k}(a_m - a_{m-1}) 
+ \sum_{k=0}^{\infty}\overline{d}_{k}\sum_{m=k}^{\infty}(a_m - a_{m+1})=\\
& =\sum_{m=-\infty}^{0}(a_{m} - a_{m-1}) \overline{\sum_{k=m}^{0}d_k}
+ \sum_{m=0}^{\infty}(a_{m} - a_{m+1}) \overline{\sum_{k=0}^{m}d_k}. \\
\end{split}
\end{equation*}

Therefore,
\begin{equation*}
\begin{split}
\left|\sum_{k=-\infty}^{\infty} a_{k}\overline{b}_{k}\right| 
&\le \sum_{m=-\infty}^{0}|a_{m} - a_{m-1}| \left|\sum_{k=m}^{0}d_k\right|
+ \sum_{m=0}^{\infty}|a_{m} - a_{m+1}| \left|{\sum_{k=0}^{m}d_k}\right|\\
& \le \sum_{r=0}^\infty\sum_{m=-(2^{r}-1)}^{- [2^{r-1}]}\left|\Delta a_{m}\right|\left|\sum_{k = m}^{0}d_{k}\right| + 
 \sum_{r=0}^\infty\sum_{m=[2^{r-1}]}^{2^{r}-1}\left|\Delta a_{m}\right|\left|\sum_{k=0}^m d_{k}\right|\\
& =  \sum_{r=0}^\infty\sum_{m=-(2^{r}-1)}^{- [2^{r-1}]}(|m|+1)\left|\Delta a_{m}\right|\frac{1}{|m|+1}\left|\sum_{k = m}^{0}d_{k}\right| \\
&+  \sum_{r=0}^\infty\sum_{m=[2^{r-1}]}^{2^{r}-1}(|m|+1)\left|\Delta a_{m}\right|\frac{1}{|m|+1}\left|\sum_{k=0}^m d_{k}\right|\\
& \le \sum_{r=0}^\infty\sum_{m=-(2^{r}-1)}^{- [2^{r-1}]}(|m|+1)\left|\Delta a_{m}\right|\widetilde{d}_{|m|+1} + 
\sum_{r=0}^\infty\sum_{m=[2^{r-1}]}^{2^{r}-1}(|m|+1)\left|\Delta a_{m}\right|\widetilde{d}_{|m|+1}.
\end{split}
\end{equation*}
Using monotonicity of the sequence 
$\{\widetilde{d}_m\}_{m=1}^{\infty}$ and inequality \eqref{alm_mon}, we obtain

\begin{equation*}
\begin{split}
\left|\sum_{k=-\infty}^{\infty} a_{k}\overline{b}_{k}\right| & 
\le  \sum_{r=0}^\infty 2^r\widetilde{d}_{[2^{r-1}]+1} \sum_{[2^{r-1}] \le |m| < 2^{r}-1}\left|\Delta a_{m}\right|\\
& \le \widetilde{d}_{1} |\Delta a_0| + 5\sum_{r=1}^\infty 2^r\widetilde{d}_{2^{r}} \sum_{[2^{r-1}] \le |m| < 2^{r}-1}\left|\Delta a_{m}\right|\\
\\
& \le 5\sum_{r=0}^\infty 2^r\widetilde{d}_{2^{r}} \sum_{[2^{r-1}] \le |m| < 2^{r}-1}\left|\Delta a_{m}\right|.
\end{split}
\end{equation*}
By using Lemma \ref{zero_modif}, we derive
$$
\left|\sum_{k=-\infty}^{\infty} a_{k}\overline{b}_{k}\right| 
 \lesssim  \sum_{r=0}^\infty 2^{r} \widetilde{b}_{2^{r}} \Theta_{r}(f). 
$$

Applying H\"older's inequality and Theorem \ref{T1}, we get
\begin{equation}\label{in13}
\begin{split}
\left|\sum_{k=-\infty}^{\infty} a_{k}\overline{b}_{k}\right| & \le
\sum_{r=0}^\infty 2^{\frac{r}{p}} \widetilde{b}_{2^{r}} \cdot 2^{\frac{r}{p'}}\Theta_{r}(f)  
\le \left(\sum_{r=0}^\infty \left(2^{\frac{r}{p}} \widetilde{b}_{2^{r}}\right)^{p'}  \right)^{\frac{1}{p'}}
\left(\sum_{r=0}^\infty  \left(2^{\frac{r}{p'}}\Theta_{r}(f)\right)^{p} \right)^{\frac{1}{p}} \\
&
\lesssim \|b\|_{n_{pp'}}\cdot I_p(f) \lesssim \|g\|_{L_{p'}}\cdot I_p(f). 
\end{split}
\end{equation}  
Combining \eqref{in11} and \eqref{in13}, we conclude
$$
\|f\|_{L_p} \lesssim I_p(f).
$$
Now consider general case where $f$ satisfies  conditions
of Theorem. Let
$$
S_N(f)=\sum_{k=-2^N}^{2^N}a_{k}e^{ikx}.
$$

From the above, we know that, for any $N \in \mathbb{N}$, 
$$
\|S_N(f)\|_{L_p} \lesssim I_p(S_N(f)).
$$
From Lemma \ref{L1} it follows that, for any $N \in \mathbb{N}$,
$$
I_p(S_N(f))\le 2^{\frac{1}{p'}}I_p(f).
$$
Since, for any $1 < p <\infty$, $\lim\limits_{N\to \infty} \|f - S_N(f)\|_{L_p} = 0$,
we get
$$
\|f\|_{L_p([-\pi,\pi])} \lesssim I_{p}(f).
$$ 
\end{proof}
\begin{remark}
The estimate \eqref{f2} in the non-periodic case 
was obtained in \cite[Theorem 1]{KoNuPe1}.  
An estimate of this type was obtained in 
\cite[Theorem 3]{DyTiJMAA} (see also \cite[Theorem 2.2]{DyTiWaterman}). 
\end{remark}

\section{The proof of the main results}\label{main-results}

\begin{proof}[Proof of Theorem \ref{T5}]
By using the  condition of general monotonicity in $\textnormal{GM}^*$, we derive 
\begin{equation*}
\begin{split}
I_p(f) & =\left(\sum_{k=0}^{\infty} \left(2^{\frac{k}{p'}}\Theta_{k}(f)\right)^p\right)^{1/p}
 \lesssim \left(\sum_{k=0}^{\infty}\left(2^{\frac{k}{p'}}\sup_{l  \in \mathbb{N}_0} \min (1, 2^{l-k}) \widetilde{a}_{2^l}\right)^p\right)^{1/p} \\
& \lesssim \left(\sum_{k=0}^{\infty}\left(2^{\frac{k}{p'}}\sup_{0\le l \le k} 2^{l-k}\widetilde{a}_{2^l}\right)^p\right)^{1/p}
+ \left(\sum_{k=0}^{\infty}\left(2^{\frac{k}{p'}}\sup_{l\ge k + 1} \widetilde{a}_{2^l}\right)^p\right)^{1/p} =: I_1 + I_2.
\end{split}
\end{equation*}
Let's estimate $I_1$. By using Hardy's inequality \eqref{Hardy2}
\begin{equation*}
\begin{split}
I_1 & = \left(\sum_{k=0}^{\infty}\left(2^{\frac{k}{p'}}\sup_{0\le l \le k} 2^{l-k}\widetilde{a}_{2^l}\right)^p\right)^{1/p}
\le \left(\sum_{k=0}^{\infty}\left(2^{\left(\frac{1}{p'}-1\right)k}\sum_{l=0}^k 2^l \widetilde{a}_{2^l}\right)^p\right)^{1/p}\\
&  \lesssim 
\left(\sum_{k=0}^{\infty}\left(2^{\frac{k}{p'}}\widetilde{a}_{2^k}\right)^p\right)^{1/p}
\asymp \|a\|_{n_{p',p}}.
\end{split}
\end{equation*}
Let's estimate $I_2$. Since $\{\widetilde{a}_{2^k}\}_{k=0}^{\infty}$ 
is a nonincreasing sequence, we obtain
$$
I_2 = 
\left(\sum_{k=0}^{\infty}\left(2^{\frac{k}{p'}}\sup_{l\ge k+1} \widetilde{a}_{2^l}\right)^p\right)^{1/p}
\le  \left(\sum_{k=0}^{\infty}\left(2^{\frac{k}{p'}}\widetilde{a}_{2^{k+1}}\right)^p\right)^{1/p}
\lesssim \|a\|_{n_{p', p}}.
$$
Therefore, by using Theorem \ref{T1}
$$
I_p(f) \lesssim I_1 + I_2 \lesssim \|a\|_{n_{p',p}}\lesssim \|f\|_{L_p([-\pi,\pi])}.
$$
Applying Theorem \ref{T4}, we get
\begin{equation}\label{ip}
I_p(f) \asymp \|f\|_{L_p([-\pi,\pi])}.
\end{equation}

Now, we show that for any $1 < p < \infty$ and for arbitrary $\{a_k\}_{k=-\infty}^{\infty}$,
inequality
$J_p^*(f) \lesssim I_p(f)$ holds.  

Repeating the same arguments as in \eqref{in11} and \eqref{in13}
and using Lemma \ref{netlesslorentz},  
we obtain 
\begin{equation*}
\begin{split}
J_p^*(f) & \asymp \|a\|_{l_{p'p}} \asymp  \sup_{\|b\|_{l_{pp'}}=1}\left|\sum_{k=-\infty}^{\infty}a_k b_k\right| 
 \lesssim \sup_{\|b\|_{l_{pp'}}=1}\|b\|_{n_{pp'}}I_p(f) \\
& \lesssim \sup_{\|b\|_{l_{pp'}}=1}\|b\|_{l_{pp'}}I_p(f) = I_p(f). 
\end{split}
\end{equation*}

On the other hand,  since $\{a_k\}_{k=-\infty}^{\infty} \in \textnormal{GM}^*$,
\begin{equation*}
\begin{split}
I_p(f)  & = \left(\sum_{k=0}^{\infty} \left(2^{\frac{k}{p'}}\Theta_{k}(f)\right)^p\right)^{1/p}
 \lesssim \left(\sum_{k=0}^{\infty}\left(2^{\frac{k}{p'}}\sup_{l  \in \mathbb{N}_0} \min (1, 2^{l-k}) \widetilde{a}_{2^l}\right)^p\right)^{1/p} \\
& \lesssim \|a\|_{n_{p',p}} \lesssim \|a\|_{l_{p',p}} \asymp  J_p^*(f).
\end{split}
\end{equation*}
Therefore,
\begin{equation}\label{jps}
I_p(f) \asymp  J_p^*(f).
\end{equation}
Combining relations \eqref{ip} and \eqref{jps} we prove Theorem \ref{T5}. 
\end{proof}

\begin{remark}
From the proof of Theorem \ref{T5}, it follows that for function $f(x)$ 
whose Fourier coefficients 
$\{a_k\}_{k = -\infty}^{\infty}$
belong to the class $\textnormal{GM}^*$, the following equivalence holds:
$$
\|f\|_{L_p([-\pi,\pi])} \asymp I_p(f) = 
\left(\sum_{k=0}^\infty \left(2^{\frac{k}{p'}} \sum\limits_{[2^{k-1}] \le |m| < 2^{k}}\left|\Delta a_{m}\right| \right)^p\right)^{1/p}, \quad 1 < p < \infty.
$$
\end{remark}

\begin{proof}[Proof of Theorem \ref{T6}]
For any $1 < p < \infty$, we define
$$
A_p(a) := \left(\sum_{k=0}^{\infty} \left(2^{\frac{k}{p'}} \sup_{r \in \mathbb{N}_0}\min(1,2^{r-k})\widehat{a}_{2^r}\right)^p\right)^{\frac{1}{p}}.
$$
Then, the following inequality holds:
$$
A_p(a) \lesssim
\left(\sum_{k=0}^{\infty} \left(2^{\frac{k}{p'}} \sup_{0 \le r \le k} 2^{r-k}\widehat{a}_{2^r}\right)^p\right)^{\frac{1}{p}}
+ \left(\sum_{k=0}^{\infty} \left(2^{\frac{k}{p'}} \sup_{r \ge k + 1} \widehat{a}_{2^r}\right)^p\right)^{\frac{1}{p}}
=: I_1 + I_2.
$$
Let's estimate $I_1$. For any $k \in \mathbb{N}_0$, we have
\begin{equation*}
\begin{split}
\sup_{0 \le r \le k} 2^{r-k}\widehat{a}_{2^r} & = 
\sup_{0 \le r \le k} 2^{r-k} \sup_{2^{r} \le |m| < 2^{r+1}} \frac{1}{|m|+1} \left|
\sum_{s=0}^{m} a_s\right| \\
& \le \sup_{0 \le r \le k} 2^{r-k}  \frac{1}{2^{r}} \sum_{s=-(2^{r+1}-1)}^{2^{r+1}-1} |a_s|
\le 2^{-k}   \sum_{s=-(2^{k+1}-1)}^{2^{k+1}-1} |a_s|.
\end{split}
\end{equation*}
Therefore,
\begin{equation*}
\begin{split}
I_1 & \lesssim 
\left(\sum_{k=0}^{\infty} \left(2^{\frac{k}{p'}} 2^{-k}   \sum_{s=-(2^{k+1}-1)}^{2^{k+1}-1} |a_s|\right)^p\right)^{\frac{1}{p}}
= \left(\sum_{k=0}^{\infty} \left(2^{-\frac{k}{p}}    \sum_{s=-(2^{k+1}-1)}^{2^{k+1}-1} |a_s|\right)^p\right)^{\frac{1}{p}}.
\end{split}
\end{equation*}

Let  $-\frac{1}{p'} < \eps < -\frac{1}{p'} + \frac{1}{p}$.
Applying H{\"o}lder's inequality, we get
\begin{equation}\label{2.1}
\begin{split}
I_1 & \lesssim\left(\sum_{k=0}^{\infty} \left(2^{-\frac{k}{p}} \sum_{s=-(2^{k+1}-1)}^{2^{k+1}-1} |a_s|\right)^p\right)^{\frac{1}{p}}\\
& \le
\left(\sum_{k=0}^{+\infty}\left(2^{-\frac{k}{p}} 
\left(\sum_{s=-(2^{k+1}-1)}^{2^{k+1}-1} \left(|a_s|(|s|+1)^{-\eps}\right)^p\right)^{\frac{1}{p}}
\left(\sum_{s=-(2^{k+1}-1)}^{2^{k+1}-1} (|s|+1)^{\eps p'}\right)^{\frac{1}{p'}}
\right)^p\right)^{\frac{1}{p}}\\
& \asymp
\left(\sum_{k=0}^{+\infty}2^{-k} 
\sum_{s=-(2^{k+1}-1)}^{2^{k+1}-1} \left(|a_s|(|s|+1)^{-\eps}\right)^p
\left(2^{k(\eps p' + 1)}\right)^{\frac{p}{p'}}
\right)^{\frac{1}{p}}\\
& =
\left(\sum_{k=0}^{+\infty} 2^{k(\eps p + p-2)}
\sum_{s=-(2^{k+1}-1)}^{2^{k+1}-1} \left(|a_s|(|s|+1)^{-\eps}\right)^p\right)^{\frac{1}{p}}\\
& \asymp \left(\sum_{s=-\infty}^{+\infty} \left(|a_s|(|s|+1)^{-\eps}\right)^p \sum_{k = \log_2 (|s|+1)}^{+\infty} 2^{k(\eps p + p-2)}
\right)^{\frac{1}{p}} \\
& \asymp 
\left(\sum_{s=-\infty}^{+\infty} \left(|a_s|(|s|+1)^{-\eps}\right)^p (|s|+1)^{\eps p + p-2}
\right)^{\frac{1}{p}} = J_p(f). 
\end{split}
\end{equation}
Now we estimate $I_2$.
For any $r \ge k$, we have
\begin{equation*}
\begin{split}
\widehat{a}_{2^r} \le \sum_{t = k}^{\infty} \widehat{a}_{2^t}.
\end{split}
\end{equation*}
Therefore, by using Hardy's inequality \eqref{Hardy1}, we obtain
\begin{equation}\label{2.11}
\begin{split}
I_2 &\le \left(\sum_{k=0}^{\infty} \left(2^{\frac{k}{p'}} \sup_{r \ge k} \widehat{a}_{2^r}\right)^p\right)^{\frac{1}{p}}
\le \left(\sum_{k=0}^{\infty} \left(2^{\frac{k}{p'}}\sum_{t = k}^{\infty} \widehat{a}_{2^t}\right)^p\right)^{\frac{1}{p}}
 \lesssim \left(\sum_{k=0}^{\infty} \left(2^{\frac{k}{p'}}\widehat{a}_{2^{k}}\right)^p\right)^{\frac{1}{p}}\\
& = \left(\sum_{k=0}^{\infty} \left(2^{\frac{k}{p'}}\sup_{2^{k} \le |m| < 2^{k+1}}\frac{1}{|m|+1}\left|\sum_{s=0}^{m} a_s\right|\right)^p\right)^{\frac{1}{p}}
 \le \left(\sum_{k=0}^{\infty} \left(2^{\frac{k}{p'}}\frac{1}{2^{k}}\sum_{s=-2^{k+1}}^{2^{k+1}} |a_s|\right)^p\right)^{\frac{1}{p}}\\
&\lesssim J_p(f).
\end{split}
\end{equation}

Combining inequalities \eqref{2.1} and \eqref{2.11}, we have 
\begin{equation}\label{2.111}
A_p(a) \lesssim J_p(f), \quad 1 < p < \infty.
\end{equation}

For further arguments we consider two cases. 

\noindent {\bf Case 1:} Let $1 < p \le 2$. By Hardy-Littlewood's  inequality \cite{HaLi27} and by Theorem \ref{T4}, we get
\begin{equation}\label{2.2}
\begin{split}
J_p(f) \le \|f\|_{L_p} \lesssim I_p(f). 
\end{split}
\end{equation}
Inequalities \eqref{2.111}
and \eqref{2.2} imply
\begin{equation*}
\begin{split}
A_p(a) \lesssim J_p(f) \le \|f\|_{L_p} 
\lesssim I_p(f).
\end{split}
\end{equation*}
Since $\{a_k\}_{k=-\infty}^{\infty} \in \overline{\textnormal{GM}}$,
\begin{equation*}
A_p(a) \lesssim J_p(f) \le \|f\|_{L_p} 
\lesssim I_p(f) \lesssim  A_p(a) .
\end{equation*}
Therefore,
$$
\|f\|_{L_p} \asymp J_p(f).
$$

\noindent{\bf Case 2:} Let $2 \le p <\infty$. 
For any $m \in \mathbb{Z}$, we consider $t \in \mathbb{N}_0$
such that $[2^{t-1}] \le |m| < 2^{t}$. 
Then, since $\{a_k\}_{k=-\infty}^{\infty} \in \overline{\textnormal{GM}}$,
\begin{equation*}
\begin{split}
|a_m| \le \sum_{|s|\ge |m|}|\Delta a_s|
\le \sum_{k=t}^{\infty}\sum_{[2^{k-1}] \le |s| < 2^{k}}|\Delta a_s|
\le \sum_{k=t}^{\infty} \sup_{l \in \mathbb{N}_0} \min(1,2^{l-k})\widehat{a}_{2^l}.
\end{split}
\end{equation*}
Therefore,
\begin{equation*}
\begin{split}
J_p(f) & = \left(\sum_{m=-\infty}^{\infty} |a_m|^p (|m|+1)^{\frac{p}{p'}-1}\right)^{\frac{1}{p}}
\le 
\left(\sum_{t=0}^{\infty} \sum_{[2^{t-1}] \le |m| < 2^{t}}|a_m|^p(|m|+1)^{\frac{p}{p'}-1}\right)^{\frac{1}{p}}\\
& \lesssim
\left(\sum_{t=0}^{\infty} \left(\sum_{k=t}^{\infty}\sum_{[2^{k-1}] \le |s| < 2^k}|\Delta a_s|\right)^p\sum_{[2^{t-1}] \le |m| < 2^{t}}
(|m|+1)^{\frac{p}{p'}-1}\right)^{\frac{1}{p}}\\
&
\asymp \left(\sum_{t=0}^{\infty} \left(2^{\frac{t}{p'}}\sum_{k=t}^{\infty}\sum_{[2^{k-1}] \le |s| < 2^k}|\Delta a_s|\right)^p\right)^{\frac{1}{p}}\\
&\lesssim
\left(\sum_{t=0}^{\infty} \left(2^{\frac{t}{p'}}\sum_{k=t}^{\infty}\sup_{l \in \mathbb{N}_0} \min(1,2^{l-k})\widehat{a}_{2^l}\right)^p\right)^{\frac{1}{p}}.
\end{split}
\end{equation*}
By using Hardy's inequality \eqref{Hardy1}, we get
\begin{equation*}
\begin{split}
& \left(\sum_{t=0}^{\infty} \left(2^{\frac{t}{p'}}\sum_{k=t}^{\infty}\sup_{l \in \mathbb{N}_0} \min(1,2^{l-k})\widehat{a}_{2^l}\right)^p\right)^{\frac{1}{p}}
\lesssim
\left(\sum_{t=0}^{\infty} \left(2^{\frac{t}{p'}}\sup_{l \in \mathbb{N}_0} \min(1,2^{l-t})\widehat{a}_{2^l}\right)^p\right)^{\frac{1}{p}}\\
&\lesssim
\left(\sum_{t=0}^{\infty} \left(2^{\frac{t}{p'}} \sup_{0 \le l \le t} 2^{l-t}\widehat{a}_{2^l}\right)^p\right)^{\frac{1}{p}}
+ \left(\sum_{t=0}^{\infty} \left(2^{\frac{t}{p'}} \sup_{l \ge t+1} \widehat{a}_{2^l}\right)^p\right)^{\frac{1}{p}}
=: L_1 + L_2.
\end{split}
\end{equation*}
We estimate $L_1$.
\begin{equation*}
\begin{split}
L_1 &= \left(\sum_{t=0}^{\infty} \left(2^{\frac{t}{p'}} \sup_{0 \le l \le t} 2^{l-t}\widehat{a}_{2^l}\right)^p\right)^{\frac{1}{p}}
=
\left(\sum_{t=0}^{\infty} \left(2^{\frac{t}{p'}-t} \sup_{0 \le l \le t} 2^{l}\widehat{a}_{2^l}\right)^p\right)^{\frac{1}{p}}\\
&\le
\left(\sum_{t=0}^{\infty} \left(2^{\frac{t}{p'}-t} \sum_{l=0}^t 2^{l}\widehat{a}_{2^l}\right)^p\right)^{\frac{1}{p}}.
\end{split}
\end{equation*}
By using Hardy's inequality \eqref{Hardy2}, we derive
\begin{equation*}
\begin{split}
L_1 \lesssim \left(\sum_{t=0}^{\infty} \left(2^{\frac{t}{p'}-t} \sum_{l=0}^t 2^{l}\widehat{a}_{2^l}\right)^p\right)^{\frac{1}{p}} \lesssim
\left(\sum_{t=0}^{\infty} \left(2^{\frac{t}{p'}} \widehat{a}_{2^t}\right)^p\right)^{\frac{1}{p}}
\le \|a\|_{n_{p',p}}.
\end{split}
\end{equation*}
Now we estimate $L_2$.
\begin{equation*}
\begin{split}
L_2 &\le  \left(\sum_{t=0}^{\infty} \left(2^{\frac{t}{p'}} \sup_{l \ge t} \widehat{a}_{2^l}\right)^p\right)^{\frac{1}{p}}
= \left(\sum_{t=0}^{\infty} \left(2^{\frac{t}{p'}} \sup_{l \ge t} \sup_{2^l \le |m| < 2^{l+1}}
\frac{1}{|m|+1}\left|\sum_{s=0}^{m} a_s\right|\right)^p\right)^{\frac{1}{p}}\\
&\le
\left(\sum_{t=0}^{\infty} \left(2^{\frac{t}{p'}} \sup_{l \ge t} \sup_{|m| \ge 2^{l}}
\frac{1}{|m|+1}\left|\sum_{s=0}^{m} a_s\right|\right)^p\right)^{\frac{1}{p}}\\
& \le
\left(\sum_{t=0}^{\infty} \left(2^{\frac{t}{p'}} \sup_{|m| \ge 2^{t}}
\frac{1}{|m|+1}\left|\sum_{s=0}^{m} a_s\right|\right)^p\right)^{\frac{1}{p}}
\le \|a\|_{n_{p'p}}.
\end{split}
\end{equation*}
Finally, since $2 \le p < \infty$, by Hardy-Littlewood's inequality \cite{HaLi27},
 we have
\begin{equation*}
\begin{split}
J_p(f) \lesssim \|a\|_{n_{p',p}} \lesssim \|f\|_{L_{p}} \lesssim J_{p}(f).
\end{split}
\end{equation*}
Therefore, $J_p(f) \asymp \|f\|_{L_p}$.
\end{proof}

\section{Comparison of classes of general monotone sequences}\label{comparison}

In this Section, at first, 
we compare classes $\textnormal{GM}_{\mathbb{R}}$ 
and $\overline{\textnormal{GM}}$, where
$$
\textnormal{GM}_{\mathbb{R}} = 
\left\{\{a_k\}_{k=1}^{\infty} \in \textnormal{GM}: \ \ a_k \in \mathbb{R}, \ \ k \ge 1\right\}.
$$ 
For this reason, we need a technique
considered in \cite{DyTi2} by M. Dyachenko and S. Tikhonov.

Without loss of generality, we may assume in the definition of the class $\textnormal{GM}$
 that $\lambda = 2^{\nu}$, where $\nu$ is a natural number.

Let $\{a_k\}_{k=1}^{\infty}\in \textnormal{GM}_{\mathbb{R}}$. 
Denote for any $n \ge 2\nu$
$$
A_n := \max_{2^n \le k \le 2^{n+1}} |a_k|, \quad
B_n := \max_{2^{n-2\nu} \le k \le 2^{n + 2\nu}}|a_k|.
$$

\begin{definition}
Let $\{a_k\}_{k=1}^{\infty} \in \textnormal{GM}_{\mathbb{R}}$. 
We say that a integer number $n \ge 2\nu$ is \textnormal{good},
if 
$B_n \le 2^{2\nu}A_n$.
The rest of integer numbers $n \ge 0$ consists of \textnormal{bad} numbers.
 \end{definition}

Denote
$$
M_n := \left\{k \in [2^{n-\nu}, 2^{n+\nu}] : |a_k| > \frac{A_n}{8C2^{2\nu}} \right\},
$$
and
$$
M_n^+ := \{k \in M_n: a_k > 0\}, \quad M_n^- := M_n \setminus M_n^{+}.
$$
\begin{lemma}[\text{\cite[Lemma 2.2]{DyTi2}}]\label{help}
Let a vanishing sequence $\{a_k\}_{k=1}^{\infty} \in \textnormal{GM}_{\mathbb{R}}$.
Denote $N_0: = [\log_2 (C^32^{10\nu + 8})] +1$.
Then, for any good  $n \ge N_0$, there exists an interval
$[l_n, m_n]\subseteq [2^{n-\nu}, 2^{n+\nu}]$ such that at least
one of the following condition holds:
\begin{enumerate}
\item[\textnormal{(i)}] for any $k \in [l_n, m_n]$, we have $a_k \ge 0$ and
$$
|M_n^+ \cap [l_n, m_n]| \ge \frac{2^n}{C^32^{15\nu+8}};
$$
\item[\textnormal{(ii)}] for any $k \in [l_n, m_n]$, we have $a_k \le 0$ and
$$
|M_n^- \cap [l_n, m_n]| \ge \frac{2^n}{C^32^{15\nu+8}}.
$$
\end{enumerate}
\end{lemma}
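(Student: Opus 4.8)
The plan is to reduce the statement to three quantitative facts about the dyadic block $[2^{n-\nu},2^{n+\nu}]$: a lower bound on the number of \emph{large} coefficients in it, an upper bound on the total variation of $\{a_k\}$ across it, and a pigeonhole argument that packs many large same-sign coefficients into a single interval of constant sign. The hypothesis that $n$ is \emph{good}, i.e.\ $B_n\le 2^{2\nu}A_n$, is used systematically: every index entering the $\textnormal{GM}$ estimates below lies in $[2^{n-2\nu},2^{n+2\nu}]$, so goodness lets me replace each average of $|a_k|$ by a fixed multiple of $A_n$.

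First I would bound $|M_n|$ from below. Choosing $k^*\in[2^n,2^{n+1}]$ with $|a_{k^*}|=A_n$ and using $|a_{k^*}|\le |a_j|+\sum_{i=2^n}^{2^{n+1}}|\Delta a_i|$ together with the $\textnormal{GM}$ bound $\sum_{i=2^n}^{2^{n+1}}|\Delta a_i|\le \frac{C}{2^n}\sum_{i=2^{n-\nu}}^{2^{n+\nu}}|a_i|$, then averaging over $j\in[2^n,2^{n+1}]$, gives $\sum_{i\in[2^{n-\nu},2^{n+\nu}]}|a_i|\ge \frac{2^nA_n}{1+C}$. I split this sum into its part over $M_n$, where each term is at most $B_n\le 2^{2\nu}A_n$, and its part over the complement, where each of the at most $2^{n+\nu+1}$ terms is at most $\frac{A_n}{8C2^{2\nu}}$. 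The factor $8$ in the threshold defining $M_n$ is chosen exactly so that the complement absorbs at most half of the lower bound, whence $|M_n|\gtrsim \frac{2^n}{C2^{2\nu}}$. Replacing $a$ by $-a$ if necessary (which preserves $\textnormal{GM}_{\mathbb{R}}$ and all the quantities involved), I may assume $|M_n^+|\ge\tfrac12|M_n|\gtrsim \frac{2^n}{C2^{2\nu}}$.

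Next I would control $V:=\sum_{k=2^{n-\nu}}^{2^{n+\nu}-1}|\Delta a_k|$ by summing the $\textnormal{GM}$ inequality over the $2\nu$ dyadic blocks covering $[2^{n-\nu},2^{n+\nu}]$, bounding each average by $2^{2\nu}A_n$ via goodness and absorbing the harmless factor $\nu\le 2^\nu$; this yields $V\le C2^{O(\nu)}A_n$. I then look at the maximal intervals on which $a_k\ge 0$, each element of $M_n^+$ lying in one of them. If $r$ of these intervals meet $M_n^+$, then between two consecutive ones the sequence must fall from a value exceeding $\frac{A_n}{8C2^{2\nu}}$ to a negative value, so each of the $r-1$ gaps costs total variation at least $\frac{A_n}{8C2^{2\nu}}$; comparing with the bound on $V$ forces $r\le C^2 2^{O(\nu)}$. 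By pigeonhole one such nonnegative interval, intersected with $[2^{n-\nu},2^{n+\nu}]$, contains at least $|M_n^+|/r\gtrsim \frac{2^n}{C^3 2^{O(\nu)}}$ elements of $M_n^+$; taking this set as $[l_n,m_n]$ realizes alternative (i), and the reflected choice realizes (ii).

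The remaining work is purely bookkeeping of the exponents of $2^\nu$ and the powers of $C$, so that the final count beats $\frac{2^n}{C^3 2^{15\nu+8}}$; in fact the estimates above produce the stronger exponent $2^{8\nu+9}$, and the assumption $n\ge N_0$ guarantees $2^n$ is large enough that this count is a genuine positive integer, so no alternative is vacuous. I expect the sign-change step to be the main obstacle: one must recognize that the absence of a long constant-sign run carrying many large coefficients would force a large total variation, which the $\textnormal{GM}$ condition on a good block explicitly forbids relative to $A_n$. Everything else is a careful accounting of the constants already built into the definitions of $A_n$, $B_n$, $M_n$ and the threshold $\frac{A_n}{8C2^{2\nu}}$.
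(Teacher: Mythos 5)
The paper does not prove this lemma at all: it is imported verbatim from Dyachenko--Tikhonov \cite[Lemma 2.2]{DyTi2}, so there is no in-paper argument to compare against. Your proposal is a correct reconstruction of the standard proof of that lemma, and it uses exactly the mechanism of the original: (1) the averaging of $|a_{k^*}|\le |a_j|+\sum|\Delta a_i|$ combined with the $\textnormal{GM}$ bound and goodness to show $|M_n|\gtrsim 2^n/(C2^{2\nu})$; (2) the bound $\sum_{k=2^{n-\nu}}^{2^{n+\nu}}|\Delta a_k|\lesssim C2^{O(\nu)}A_n$ obtained by summing the $\textnormal{GM}$ inequality over the $2\nu$ dyadic blocks and using $B_n\le 2^{2\nu}A_n$; (3) the observation that each passage between maximal constant-sign runs containing a point of $M_n^{\pm}$ costs variation at least $A_n/(8C2^{2\nu})$, so there are only $O(C^2 2^{O(\nu)})$ such runs, and pigeonhole finishes. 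Two small points worth making explicit if you write this up: you need $C\ge 1$ (harmless, as $C$ may always be increased) for the complement of $M_n$ to absorb only half of the lower bound $\sum_{i}|a_i|\ge 2^nA_n/(1+C)$, and the argument tacitly assumes $A_n>0$, which is forced whenever the conclusion of the lemma is non-vacuous since goodness with $A_n=0$ would make $M_n$ empty.
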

\begin{lemma}[\text{\cite[Lemma 3.6]{DyMuTi}}]\label{gb}
Let a vanishing sequence $\{a_k\}_{k=1}^{\infty} \in \textnormal{GM}_{\mathbb{R}}$.
Then for any bad number $n \ge 2\nu$ there exists a set of  integer numbers
\begin{equation*}
n = \xi_0 > \xi_1 > \xi_2 > \ldots > \xi_{s-1} > \xi_{s} =: \xi_{n,s}
\end{equation*}
or
\begin{equation*}
n =\xi_0 < \xi_1 < \xi_2 < \ldots < \xi_{s-1} < \xi_{s}=: \xi_{n,s}
\end{equation*}
such that $\xi_1, \ldots, \xi_{s-1}$ are bad, $\xi_{s}$ is good,  and
$$
A_n < 2^{-2\nu}A_{\xi_1} < 2^{-4\nu}A_{\xi_2}< \ldots < 2^{-2s\nu}A_{\xi_{s}},
$$
$$
|\xi_i - \xi_{i-1}| \le 2\nu  \quad i = 1,\ldots, s.
$$
\end{lemma}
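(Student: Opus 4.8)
The plan is to construct the chain by iterating a single ``one-step'' move, so that the amplitude inequalities, the termination, and the monotonicity can all be read off from the rule by which each successor is chosen. First I would isolate the one-step claim: if an integer $m \ge 2\nu$ is bad, then there is an integer $m' \ne m$ with $|m'-m|\le 2\nu$ and $A_m < 2^{-2\nu}A_{m'}$. Indeed, badness means $B_m > 2^{2\nu}A_m \ge 0$, so the window $[2^{m-2\nu},2^{m+2\nu}]$ contains a point $k^*$ with $|a_{k^*}| = B_m$. This $k^*$ cannot lie in the block $[2^m,2^{m+1}]$, since otherwise $A_m \ge B_m > A_m$; hence its block index $m' := \lfloor \log_2 k^*\rfloor$ differs from $m$, lies in $[m-2\nu,m+2\nu]$, and satisfies $A_{m'} \ge |a_{k^*}| = B_m > 2^{2\nu}A_m$. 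The key point, which the monotonicity argument will exploit, is that $m'$ is chosen as the block of a genuine maximizer of $B_m$.

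Next I would define $\xi_0 := n$ and, given a bad $\xi_{i-1}$, let $\xi_i$ be the block index produced by the one-step claim applied to $\xi_{i-1}$, stopping at the first good $\xi_s$. With this construction the amplitude chain is automatic: each step gives $A_{\xi_i} > 2^{2\nu}A_{\xi_{i-1}}$, which rearranges to $2^{-2i\nu}A_{\xi_i} > 2^{-2(i-1)\nu}A_{\xi_{i-1}}$ and telescopes to $A_n < 2^{-2\nu}A_{\xi_1} < \cdots < 2^{-2s\nu}A_{\xi_s}$. Termination follows from boundedness: iterating the one-step bound yields $A_{\xi_i} > 2^{2(i-1)\nu}A_{\xi_1}$ with $A_{\xi_1} \ge B_n > 0$ (badness of $n$ forces $B_n > 0$), while $A_{\xi_i} \le \sup_k |a_k| < \infty$ because the sequence is vanishing; thus only finitely many steps are possible, so a good index is reached and $s < \infty$.

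The main obstacle is monotonicity, i.e.\ that the chain can be taken strictly increasing or strictly decreasing. I would fix the direction at the first step (either $\xi_1 > n$ or $\xi_1 < n$, the two cases being symmetric) and prove by induction that it never reverses. Suppose $\xi_0 < \cdots < \xi_i$ with $\xi_i$ bad, and suppose for contradiction that the maximizer $k^*$ of $B_{\xi_i}$ lies to the left, so that $k^* < 2^{\xi_i}$ and its block $\xi_{i+1}$ satisfies $\xi_{i+1} < \xi_i$. Since $\xi_{i-1} < \xi_i \le \xi_{i-1}+2\nu$, this $k^*$ then also lies in the window $[2^{\xi_{i-1}-2\nu},2^{\xi_{i-1}+2\nu}]$ of $\xi_{i-1}$, whence $B_{\xi_{i-1}} \ge |a_{k^*}| = B_{\xi_i} > 2^{2\nu}A_{\xi_i} \ge 2^{2\nu}B_{\xi_{i-1}}$, contradicting $B_{\xi_{i-1}} > 0$. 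Therefore $\xi_{i+1} > \xi_i$, the chain stays monotone, and together with the amplitude inequalities and termination this completes the proof. The delicate input is precisely that each $\xi_i$ is the block of a maximizer of $B_{\xi_{i-1}}$, since the contradiction rests on comparing $B_{\xi_{i-1}}$ with $B_{\xi_i}$ rather than on the crude amplitude growth alone.
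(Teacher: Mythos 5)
The paper does not actually prove this lemma --- it is imported verbatim from \cite[Lemma 3.6]{DyMuTi} --- so there is no in-paper argument to compare against, and I judge your proposal on its own. Your skeleton is the natural one: the one-step move to the block of a maximizer of $B_m$ is correct, the telescoping of the amplitude inequalities is correct, and your monotonicity argument --- using that $\xi_i$ is the block of a maximizer of $B_{\xi_{i-1}}$ to force $B_{\xi_{i-1}}\ge B_{\xi_i}>2^{2\nu}A_{\xi_i}\ge 2^{2\nu}B_{\xi_{i-1}}$ upon a reversal --- is exactly the right mechanism and is the strongest part of the write-up.

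The genuine gap is in the termination step. ``Only finitely many steps are possible, so a good index is reached'' is a non sequitur: your iteration halts either because the current index is good or because the one-step move is no longer available, and the latter occurs as soon as the chain reaches an index $m<2\nu$, where $B_m$ is undefined and which the paper declares bad by fiat. Your monotonicity argument keeps an \emph{increasing} chain above $n\ge 2\nu$, but it does nothing to stop a \emph{decreasing} chain from marching below $2\nu$: each step may drop by as much as $2\nu$, and the number of steps is bounded only in terms of $\sup_k|a_k|/B_n$, which can be arbitrarily large. A telling symptom is that your argument never uses the hypothesis $\{a_k\}_{k=1}^{\infty}\in\textnormal{GM}_{\mathbb{R}}$ at all, only the definitions of $A_n$, $B_n$ and good/bad. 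Indeed, with the definitions exactly as transcribed here the conclusion can genuinely fail: for $a_k=k^{-2}$ (monotone, vanishing, hence in $\textnormal{GM}_{\mathbb{R}}$) one has $B_n=2^{4\nu}A_n>2^{2\nu}A_n$ for every $n\ge 2\nu$, so there are no good numbers whatsoever and no chain can terminate as required; in the cited source the goodness threshold involves the $\textnormal{GM}$ constant $C$, and it is the general monotonicity condition that controls the downward chains. So the missing ingredient --- an actual use of general monotonicity to prevent the decreasing chain from escaping below $2\nu$, or to guarantee a good index within its range --- is essential rather than a boundary technicality.
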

\begin{remark}
These properties of sequences from $\textnormal{GM}_{\mathbb{R}}$ 
were stated in \cite{DyTi2}. Due to these properties, 
some classical results of Fourier analysis and 
approximation theory have been extended to the 
$\textnormal{GM}_{\mathbb{R}}$ class, see \cite{BeDyTi, DyMuTi, DyMuTi2, DyTi2},
and see also \cite{De, De19, DyTi22, Og}.
\end{remark}
\begin{lemma}
Let $\{a_k\}_{k=-\infty}^{\infty}$ be a sequence of real numbers such that
$a_k = 0$, for any $k \le 0$, and $\{a_k\}_{k=1}^{\infty} \in \textnormal{GM}_{\mathbb{R}}$, 
$\lim\limits_{k \to +\infty} a_k = 0$.  
Then, for any $n \ge 0$,
\begin{equation}\label{gmgm-1}
\sum_{k=2^{n}}^{2^{n+1}} \frac{|a_k|}{k} \lesssim 
\sup_{k \in \mathbb{N}_0} \min(1, 2^{k-n}) \widehat{a}_{2^k}.
\end{equation}
\end{lemma}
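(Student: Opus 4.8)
The plan is to reduce the left-hand side to the block maximum $A_n := \max_{2^n \le k \le 2^{n+1}} |a_k|$ and then to control $A_n$ by the good/bad dichotomy of Dyachenko--Tikhonov. Since $\frac{1}{k} \le 2^{-n}$ throughout the block, one has $\sum_{k=2^n}^{2^{n+1}} \frac{|a_k|}{k} \le 2^{-n}\sum_{k=2^n}^{2^{n+1}}|a_k| \le 2A_n$, so it suffices to prove the key estimate $A_n \lesssim R(n)$, where $R(n) := \sup_{k \in \mathbb{N}_0} \min(1,2^{k-n})\widehat{a}_{2^k}$, with a constant depending only on $C$ and $\nu$. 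Throughout I use that $a_j = 0$ for $j \le 0$, so that the partial sums appearing in $\widehat{a}_{2^k}$ vanish for negative indices and reduce to $S_m := \sum_{j=0}^{m} a_j = \sum_{j=1}^{m} a_j$ for $m > 0$; hence $\widehat{a}_{2^k} = \sup_{2^k \le m < 2^{k+1}} \frac{|S_m|}{m+1}$.

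First I would dispose of the bounded range $n < N_0$ by an elementary telescoping bound, valid for every $n$ but useful only when $2^n$ is bounded. Choosing $k_0 \in [2^n, 2^{n+1}]$ with $|a_{k_0}| = A_n$ and writing $a_{k_0} = S_{k_0} - S_{k_0 - 1}$, I bound $|S_{k_0}| \le (k_0+1)\widehat{a}_{2^{[\log_2 k_0]}}$ and $|S_{k_0-1}| \le k_0\,\widehat{a}_{2^{[\log_2 (k_0-1)]}}$; since the relevant dyadic levels differ from $n$ by at most one, $\min(1,2^{[\log_2 k_0]-n}) \ge \tfrac12$, so both averages are at most $2R(n)$. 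This gives $A_n \le |S_{k_0}| + |S_{k_0-1}| \lesssim 2^n R(n) \le 2^{N_0} R(n)$, and as $N_0$ depends only on $C$ and $\nu$, the estimate $A_n \lesssim R(n)$ holds for all $n < N_0$.

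The heart of the argument is the case of a good number $n \ge N_0$. By Lemma \ref{help} there is an interval $[l_n, m_n] \subseteq [2^{n-\nu}, 2^{n+\nu}]$ on which all $a_k$ share one sign and at least $\gtrsim 2^n$ of them exceed $A_n/(8C2^{2\nu})$ in modulus; summing over this interval gives $\bigl|\sum_{k=l_n}^{m_n} a_k\bigr| \gtrsim 2^n A_n$, the implied constant depending only on $C$ and $\nu$. Since this block sum equals $S_{m_n} - S_{l_n - 1}$, at least one endpoint $m \in \{m_n,\, l_n-1\}$ satisfies $|S_m| \gtrsim 2^n A_n$; as $m$ lies in $[2^{n-\nu}-1, 2^{n+\nu}]$ it sits in a dyadic block $2^{k'} \le m < 2^{k'+1}$ with $|k'-n| \le \nu+1$. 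Consequently $\widehat{a}_{2^{k'}} \ge \frac{|S_m|}{m+1} \gtrsim 2^{-\nu} A_n$ and $R(n) \ge \min(1, 2^{k'-n})\widehat{a}_{2^{k'}} \ge 2^{-(\nu+1)} \widehat{a}_{2^{k'}} \gtrsim A_n$, proving $A_n \lesssim R(n)$ for good $n \ge N_0$.

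Finally, for a bad number $n$ I would pass to the good number $\xi := \xi_{n,s}$ furnished by Lemma \ref{gb}, for which $A_n < 2^{-2s\nu} A_{\xi}$ and $|n-\xi| \le 2s\nu$. The only bookkeeping needed is the elementary inequality $\min(1,2^{k-\xi}) \le 2^{|n-\xi|}\min(1,2^{k-n}) \le 2^{2s\nu}\min(1,2^{k-n})$, valid for every $k$ (verify the cases $2^{k-n}\le 1$ and $2^{k-n}>1$ separately), which yields $R(\xi) \le 2^{2s\nu} R(n)$. Applying to $\xi$ whichever of the two estimates above is available — the sharp one if $\xi \ge N_0$, the telescoping one if $\xi < N_0$ — we obtain
$$
A_n \le 2^{-2s\nu} A_{\xi} \lesssim 2^{-2s\nu} R(\xi) \le 2^{-2s\nu}\, 2^{2s\nu} R(n) = R(n),
$$
so the geometric gain $2^{-2s\nu}$ along the chain exactly cancels the loss $2^{2s\nu}$ from the change of scale, and $A_n \lesssim R(n)$ in all cases. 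The main obstacle is the good-number step: converting the purely combinatorial same-sign information of Lemma \ref{help} into a genuine lower bound for a partial sum $S_m$ and then matching it to the correct dyadic level of $\widehat{a}$; by comparison, the reduction to $A_n$, the telescoping base case, and the chaining are routine bookkeeping.
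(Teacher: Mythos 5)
Your proposal is correct and follows essentially the same route as the paper's own proof: the telescoping bound on partial sums for small $n$, Lemma \ref{help} to produce a large one-signed block sum for good $n \ge N_0$ (your triangle-inequality step $|S_{m_n}|+|S_{l_n-1}| \ge |S_{m_n}-S_{l_n-1}| \gtrsim 2^nA_n$ is just a compressed version of the paper's two-case split on the size of the prefix sum), and Lemma \ref{gb} to chain bad numbers to good ones. The one notable streamlining is that your single scale-change inequality $\min(1,2^{k-\xi}) \le 2^{|n-\xi|}\min(1,2^{k-n})$ merges the paper's three bad-number sub-cases (Cases 3--5, distinguishing $\xi_s > n$, $N_0 \le \xi_s < n$, and $\xi_s < N_0$) into one computation in which the gain $2^{-2s\nu}$ from Lemma \ref{gb} exactly cancels the loss $2^{2s\nu}$ from shifting scales.
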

\begin{proof}
Let $N_0 = N_0(C, \nu)$ be a constant defined in Lemma \ref{help}. 
We prove \eqref{gmgm-1} by considering five cases.

\noindent{\bf Case 1.} Let $1 \le n \le N_0$. Then, for any $2^n \le k \le 2^{n+1}$, we have
\begin{equation*}
\begin{split}
|a_k| & = \left|\sum_{i = 0}^{k} a_i - \sum_{i = 0}^{k-1} a_i \right|
\le  
 (k+1) \cdot \frac{1}{k+1}\left|\sum_{i = 0}^{k} a_i \right| + 
k\cdot \frac{1}{k}\left|\sum_{i = 0}^{k-1} a_i \right| \\
& \le (2^{N_0 + 1}+1) (2\widehat{a}_{2^{n}} + \widehat{a}_{2^{n+1}})
\lesssim \sup_{t \ge n} \widehat{a}_{2^{t}}
\le \sup_{t \in \mathbb{N}_0} \min(1, 2^{t-n})\widehat{a}_{2^{t}}.
\end{split}
\end{equation*}
Therefore,
\begin{equation*}
\begin{split}
\sum_{k = 2^n}^{2^{n+1}} \frac{|a_k|}{k} \lesssim 
\sup_{t \in \mathbb{N}_0} \min(1, 2^{t-n})\widehat{a}_{2^{t}}\sum_{k = 2^n}^{2^{n+1}} \frac{1}{k}
\lesssim \sup_{t \in \mathbb{N}_0} \min(1, 2^{t-n})\widehat{a}_{2^{t}}.
\end{split}
\end{equation*}

\noindent {\bf Case 2.} Let $n > N_0$ be a good number.  
By Lemma \ref{help} there exist integer numbers $l_{n}, m_{n}$
such that $[l_{n}, m_{n}] \subset [2^{n - \nu}, 2^{n + \nu}]$, $m_{n} - l_{n} \asymp 2^{n}$
and
$$
\frac{1}{2^{n}}\left|\sum_{k = l_{n}}^{m_{n}} a_k\right| 
\gtrsim A_{n} \ge \frac{1}{2^{n}} \sum_{k=2^n}^{2^{n+1}} |a_k|
\gtrsim \sum_{k=2^n}^{2^{n+1}} \frac{|a_k|}{k}. 
$$

Consider case, when  $\left|\sum\limits_{k=0}^{l_{n}-1} a_k\right| \ge \frac{1}{2}\left|\sum\limits_{k=l_{n}}^{m_{n}} a_k\right|$, 
then 
$$
\frac{1}{l_{n}}\left|\sum_{k=0}^{l_{n}-1} a_k\right|
\ge \frac{1}{2l_n}\left|\sum_{k=l_{n}}^{m_{n}} a_k\right|
\gtrsim \frac{1}{2^{n}}\left|\sum_{k=l_{n}}^{m_{n}} a_k\right|
\gtrsim  \sum_{k=2^{n}}^{2^{n+1}} \frac{|a_k|}{k}.
$$

Define $\eta$: $2^{\eta} \le l_{n} < 2^{\eta+1}$. 
Then
$$
\sum_{k=2^{n}}^{2^{n+1}} \frac{|a_k|}{k} \lesssim 
\frac{1}{l_{n}}\left|\sum_{k=0}^{l_{n}-1} a_k\right|
\le  \widehat{a}_{2^{\eta}}.
$$
Note that, since $2^{n - \nu} \le l_{n} \le 2^{n + \nu}$, then
$n - \nu \le \eta \le n + \nu - 1$.

Now consider case, when $\left|\sum\limits_{k=0}^{l_{n}-1} a_k\right| < \frac{1}{2}\left|\sum\limits_{k=l_{n}}^{m_{n}} a_k\right|$, 
then 
$$
\left|\sum_{k=0}^{m_{n}}a_k\right| \ge
\left|\sum_{k=l_{n}}^{m_{n}}a_k\right| -
\left|\sum_{k=0}^{l_{n}-1}a_k\right| 
\ge \frac{1}{2}\left|\sum_{k=l_{n}}^{m_{n}}a_k\right|.
$$
Therefore,
$$
\frac{1}{m_{n}+1}\left|\sum_{k=0}^{m_{n}}a_k\right|
\gtrsim  \sum_{k=2^{n}}^{2^{n+1}} \frac{|a_k|}{k}.
$$
Hence,
$$
\sum_{k=2^{n}}^{2^{n+1}} \frac{|a_k|}{k} \lesssim 
 \widehat{a}_{2^{\eta}},
$$
where $\eta$ is defined by relation: $2^{\eta} \le m_{n} + 1 < 2^{\eta+1}$.
As in previous case such $\eta$ satisfies condition: $n - \nu \le \eta \le n + \nu - 1$.

In both cases, we obtain
$$
\sum_{k=2^{n}}^{2^{n+1}} \frac{|a_k|}{k} \lesssim 
 \widehat{a}_{2^{\eta}},
$$
where $n - \nu \le \eta \le n + \nu - 1$.
Therefore,
$$
\sum_{k=2^{n}}^{2^{n+1}} \frac{|a_k|}{k} \lesssim 
\widehat{a}_{2^{\eta}} 
\le  \sup_{k \ge n - \nu}\widehat{a}_{2^{k}}
\lesssim \sup_{k \in \mathbb{N}_0} \min(1, 2^{k-n}) \widehat{a}_{2^k}.
$$
 
Now let $n > N_0$ be a bad number. 
Then,  there exists a good number $\xi_s = \xi_{n,s}$ satisfying conditions
from Lemma \ref{gb}.  In particular, $A_{\xi_s} \ge 2^{2s\nu}A_n$.

\noindent {\bf Case 3.} Let $\xi_s > n$. Then by Lemma \ref{help} there exist integer numbers $l_{\xi_s}, m_{\xi_s}$
such that $[l_{\xi_s}, m_{\xi_s}] \subset [2^{\xi_s - \nu}, 2^{\xi_s + \nu}]$, $m_{\xi_s} - l_{\xi_s} \asymp 2^{\xi_s}$
and
$$
\frac{1}{2^{\xi_s}}\left|\sum_{k = l_{\xi_s}}^{m_{\xi_s}} a_k\right| 
\gtrsim A_{\xi_s}. 
$$
From Lemma \ref{gb} we have
$$
\frac{1}{2^{\xi_s}}\left|\sum_{k = l_{\xi_s}}^{m_{\xi_s}} a_k\right| 
\gtrsim A_{\xi_s} \ge 2^{2s\nu} A_n \gtrsim
2^{2s\nu} \sum_{k=2^n}^{2^{n+1}}\frac{|a_k|}{k}.
$$

Repeating the same arguments as in Case 2,  we derive 
$$
\sum_{k=2^{n}}^{2^{n+1}} \frac{|a_k|}{k} \lesssim
\frac{1}{2\nu s} \cdot \frac{1}{2^{\xi_s}}\left|\sum_{k = l_{\xi_s}}^{m_{\xi_s}} a_k\right|
\lesssim 
\frac{1}{2^{2\nu s}} \widehat{a}_{2^{\eta}},
$$
where $\xi_s - \nu \le \eta \le \xi_s + \nu - 1$. 
Since $\xi_s > n$,  we get $\eta > n - \nu$, and
$$
\sum_{k=2^{n}}^{2^{n+1}} \frac{|a_k|}{k} \lesssim 
\frac{1}{2^{2\nu s}} \widehat{a}_{2^{\eta}} 
\le  \sup_{t > n - \nu}\widehat{a}_{2^{t}}
\lesssim \sup_{t \in \mathbb{N}_0} \min(1, 2^{t-n}) \widehat{a}_{2^t}.
$$

\noindent {\bf Case 4.} Let $N_0 \le \xi_s < n$, then by Lemma \ref{gb}, $\xi_s < n \le \xi_s +2\nu s$.
The same arguments as in the Case 3 imply that there exists integer number $\eta$
such that $\eta - \nu + 1 < n \le \eta + \nu + 2\nu s$ and
\begin{equation*}
\begin{split}
 \sum_{k=2^{n}}^{2^{n+1}} \frac{|a_k|}{k} &\lesssim 
\frac{1}{2^{2\nu s}} \widehat{a}_{2^{\eta}} = 
\frac{2^n 2^{\eta}}{2^{2\nu s}2^n2^{\eta}} \widehat{a}_{2^{\eta}}
= 
\frac{2^n}{2^{2\nu s}2^{\eta}} \cdot \frac{2^{\eta}}{2^{n}} \widehat{a}_{2^{\eta}}\\
& \lesssim \frac{2^{\eta}}{2^n} \widehat{a}_{2^{\eta}}
\le \sup_{t < n + \nu - 1} \frac{2^t}{2^n} \widehat{a}_{2^t}
\lesssim \sup_{t \in \mathbb{N}_0} \min(1,2^{t-n})\widehat{a}_{2^t}.
\end{split}
\end{equation*}

\noindent{\bf Case 5.} Let $\xi_s < N_0 < n$.  Then by Lemma \ref{gb}, $\xi_s < n \le \xi_s +2\nu s$, and
$$
 \sum_{k=2^{n}}^{2^{n+1}} \frac{|a_k|}{k} \lesssim A_n 
\le \frac{1}{2^{2\nu s}}A_{\xi_s}.
$$
The same arguments as in Case 1 imply 
$$
A_{\xi_s} \lesssim (2\widehat{a}_{2^{\xi_s}} + \widehat{a}_{2^{\xi_s + 1}}).
$$
Therefore, 
\begin{equation*}
\begin{split} 
 \sum_{k=2^{n}}^{2^{n+1}} \frac{|a_k|}{k} & \lesssim \frac{1}{2^{2\nu s}}A_{\xi_s} 
\lesssim \frac{1}{2^{2\nu s}} (2\widehat{a}_{2^{\xi_s}} + \widehat{a}_{2^{\xi_s + 1}}) 
 = \frac{2\cdot 2^n 2^{\xi_s}}{2^{2\nu s}\cdot 2^n 2^{\xi_s}} \widehat{a}_{2^{\xi_s}}
+ \frac{2^n 2^{\xi_s + 1}}{2^{2\nu s}\cdot 2^n 2^{\xi_s + 1}} \widehat{a}_{2^{\xi_s+1}}\\
& = \frac{2^{n+1}}{2^{2\nu s}2^{\xi_s}} \cdot \frac{2^{\xi_s}}{2^n} \widehat{a}_{2^{\xi_s}}
+ \frac{2^n}{2^{2\nu s}2^{\xi_s + 1}} \cdot \frac{2^{\xi_s + 1}}{2^n} \widehat{a}_{2^{\xi_s+1}}
\lesssim \sup_{t \le n} 2^{t-n}\widehat{a}_{2^t}
\le \sup_{t \in \mathbb{N}_0} \min(1, 2^{t-n})\widehat{a}_{2^t}.
\end{split}
\end{equation*}

Thus, relation \eqref{gmgm-1} was proved for all good and bad numbers $n$.

\end{proof}

\begin{lemma}
Let $\{a_k\}_{k=-\infty}^{\infty}$ be a sequence of real numbers such that
$a_k = 0$, for any $k \le 0$, and 
$\{a_k\}_{k=1}^{\infty} \in \textnormal{GM}_{\mathbb{R}}$,
$\lim\limits_{k \to +\infty} a_k = 0$.  
Then $\{a_k\}_{k=-\infty}^{\infty} \in \overline{\textnormal{GM}}$. 
In this sense,
we interpret the following inclusion:
$$
\textnormal{GM}_{\mathbb{R}} \subset
\overline{\textnormal{GM}}.
$$ 
\end{lemma}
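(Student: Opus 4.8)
The plan is to verify the defining inequality of $\overline{\textnormal{GM}}$ directly, reducing its left-hand side to the already established estimate \eqref{gmgm-1} by means of the general monotonicity condition. First I would use the hypothesis $a_k=0$ for $k\le 0$ to simplify $\sum_{[2^{n-1}]\le|m|<2^n}|\Delta a_m|$. For $m<0$ one has $|\Delta a_m|=|a_m-a_{m-1}|=0$, while $|\Delta a_0|=|a_0-a_1|+|a_0-a_{-1}|=|a_1|$. Hence the sum equals $|a_1|$ when $n=0$ and equals $\sum_{m=2^{n-1}}^{2^n-1}|a_m-a_{m+1}|$ when $n\ge 1$, and it suffices to dominate these one-sided quantities by $C\sup_{k\in\mathbb{N}_0}\min(1,2^{k-n})\widehat a_{2^k}$.

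For the principal range $n\ge\nu+1$ (recall we assume $\lambda=2^\nu$), I would apply the general monotonicity condition with $N=2^{n-1}$ to get
\[
\sum_{m=2^{n-1}}^{2^n-1}|a_m-a_{m+1}|\le\sum_{m=2^{n-1}}^{2^n}|a_m-a_{m+1}|\lesssim\frac1{2^{n-1}}\sum_{k=2^{n-1-\nu}}^{2^{n-1+\nu}}|a_k|.
\]
Since every $k$ in this window satisfies $k\asymp 2^n$, the right-hand side is comparable to $\sum_{k=2^{n-1-\nu}}^{2^{n-1+\nu}}\frac{|a_k|}{k}$, which I would split into the $O(\nu)$ dyadic blocks $[2^j,2^{j+1}]$ with $n-1-\nu\le j\le n-2+\nu$. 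Estimating each block by \eqref{gmgm-1} yields $\sum_{k=2^j}^{2^{j+1}}\frac{|a_k|}{k}\lesssim\sup_t\min(1,2^{t-j})\widehat a_{2^t}$, and the elementary bound $\min(1,2^{t-j})\le 2^{|j-n|}\min(1,2^{t-n})\le 2^{\nu+1}\min(1,2^{t-n})$ (valid since $|j-n|\le\nu+1$) converts the level-$j$ supremum into a level-$n$ supremum. Summing the bounded number of blocks gives the required estimate with a constant depending only on $C$ and $\nu$.

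It remains to treat the boundary. For $n=0$ I would note $\widehat a_1\ge\frac12|a_0+a_1|=\frac12|a_1|$, so $|a_1|\le 2\widehat a_1\le 2\sup_t\min(1,2^t)\widehat a_{2^t}$, since $\min(1,2^t)=1$ for $t\ge0$. For $1\le n\le\nu$, where the window of the monotonicity condition would reach below index $1$, the sum has at most $2^{\nu-1}$ terms, and I would bound each $|a_k|$ with $k\in[2^{n-1},2^n]$ exactly as in Case 1 of the proof of \eqref{gmgm-1}, namely $|a_k|\lesssim\sup_{t\ge n-1}\widehat a_{2^t}\lesssim\sup_t\min(1,2^{t-n})\widehat a_{2^t}$, after which summing the finitely many terms costs only a constant depending on $\nu$.

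The deep content is already contained in \eqref{gmgm-1}, whose proof rests on the Dyachenko--Tikhonov good/bad number decomposition. The only genuine care point in the present argument is to keep the number of dyadic blocks being summed bounded independently of $n$, so that the implied constant does not grow; this is precisely why the narrow window $[2^{n-1-\nu},2^{n-1+\nu}]$ furnished by the general monotonicity condition is used rather than the entire head of the sequence, and why the small values $n\le\nu$ must be peeled off and handled by hand.
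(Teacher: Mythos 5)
Your proof is correct and follows essentially the same route as the paper's: reduce the left-hand side to the positive indices, apply the $\textnormal{GM}$ condition to pass to the window $[2^{n-1-\nu},2^{n-1+\nu}]$, rewrite the average as $\sum |a_k|/k$, split it into $O(\nu)$ dyadic blocks, apply \eqref{gmgm-1} to each block, and shift the supremum from level $j$ to level $n$ at the cost of a factor $2^{\nu+1}$. Your explicit treatment of the boundary cases $n=0$ and $1\le n\le\nu$ is a careful refinement of a point the paper passes over in one line, but it does not change the substance of the argument.
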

\begin{proof}
Let $n \in \mathbb{N}$.  Then, since 
$\{a_k\}_{k=1}^{\infty} \in \textnormal{GM}$, we have 
$$
\sum_{[2^{n-1}] \le |m| < 2^n} |\Delta a_m|  = 
\sum_{2^{n-1} \le m < 2^n} |\Delta a_m| 
\lesssim \sum_{k = 2^{n-\nu -1}}^{2^{n+\nu - 1}} \frac{|a_k|}{k}
\le  \sum_{s = n - \nu - 1}^{n+\nu - 2}\sum_{k = 2^s}^{2^{s+1}} \frac{|a_k|}{k}.
$$
By using inequality \eqref{gmgm-1} 
\begin{equation*}
\begin{split}
\sum_{[2^{n-1}] \le |m| < 2^n} |\Delta a_m| 
&\lesssim \sum_{s = n - \nu - 1}^{n+\nu - 2}\sum_{k = 2^s}^{2^{s+1}} \frac{|a_k|}{k}
\lesssim \sum_{s = n - \nu - 1}^{n+\nu - 2} \sup_{k \in \mathbb{N}_0} \min(1,2^{k-s})\widehat{a}_{2^k} \\
&\le 2^{\nu+1}\nu \sup_{k \in \mathbb{N}_0} \min(1,2^{k-n})\widehat{a}_{2^k}.
\end{split}
\end{equation*}
Therefore, for any $n \ge 1$, we obtain
\begin{equation}\label{gmgm-2}
\sum_{[2^{n-1}] \le |m| < 2^n} |\Delta a_m| 
\lesssim \sup_{k \in \mathbb{N}_0} \min(1,2^{k-n})\widehat{a}_{2^k}.
\end{equation}
It easy to see, that inequality \eqref{gmgm-2} holds for $n = 0$.
Hence, $\{a_n\}_{n= - \infty}^{\infty} \in \overline{\textnormal{GM}}$.
\end{proof}

According to the notations in \cite{GrSaSa},  we consider sector
of the complex plane:
$$
S_{\alpha, \beta} = \{z \in \mathbb{C} : |\arg z -\alpha| \le \beta \}, 
\quad 0 \le \alpha < 2\pi, \quad 0 \le \beta < \frac{\pi}{2}.
$$
Now we compare the classes $\textnormal{GM}_{\alpha, \beta}$
and $\overline{\textnormal{GM}}$, where
$$
\textnormal{GM}_{\alpha, \beta} = 
\left\{\{a_k\}_{k=1}^{\infty} \in  \textnormal{GM}:  \ \ a_k \in S_{\alpha, \beta}, \ \ k \ge 1\right\}.
$$
 
\begin{lemma}
Let $0 \le \alpha < 2\pi$,
$0 \le \beta < \frac{\pi}{2}$, 
and  let $\{a_k\}_{k=-\infty}^{\infty}$ 
be a sequence of complex numbers such that
$a_k = 0$, for any $k \le 0$, and 
$\{a_k\}_{k=1}^{\infty} \in \textnormal{GM}_{\alpha, \beta}$.  
Then $\{a_k\}_{k=-\infty}^{\infty} \in \overline{\textnormal{GM}}$. In this sense,
we interpret the following inclusion:
$$
\textnormal{GM}_{\alpha, \beta} \subset
\overline{\textnormal{GM}}.
$$
\end{lemma}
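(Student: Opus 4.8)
The plan is to follow the same two-step strategy as in the real case, but to re-establish the key averaging inequality \eqref{gmgm-1} from the geometry of the sector $S_{\alpha,\beta}$ rather than from the good/bad number decomposition of Lemmas \ref{help} and \ref{gb}. The single fact I would exploit is the coercivity of summation over a sector: if $z_1,\dots,z_m \in S_{\alpha,\beta}$ with $\beta<\tfrac{\pi}{2}$, then, writing $b_j = e^{-i\alpha}z_j$ so that $\operatorname{Re} b_j \ge |z_j|\cos\beta$, one has
\[
\Bigl|\sum_{j} z_j\Bigr| = \Bigl|\sum_j b_j\Bigr| \ge \operatorname{Re}\sum_j b_j \ge \cos\beta \sum_j |z_j|.
\]
Since $a_k = 0$ for $k\le 0$ and $a_k \in S_{\alpha,\beta}$ for $k\ge 1$, every partial sum satisfies $\sum_{j=0}^{m} a_j = \sum_{j=1}^{m} a_j$ with all summands in the sector, so this bound applies directly to the quantities entering $\widehat{a}_{2^k}$.

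First I would convert the coercivity into a lower bound for $\widehat{a}_{2^{n+1}}$. Choosing $m=2^{n+1}$, which lies in the admissible range $2^{n+1}\le |m| < 2^{n+2}$, gives
\[
\widehat{a}_{2^{n+1}} \ge \frac{1}{2^{n+1}+1}\Bigl|\sum_{j=1}^{2^{n+1}} a_j\Bigr| \ge \frac{\cos\beta}{2^{n+1}+1}\sum_{j=1}^{2^{n+1}}|a_j|.
\]
Consequently
\[
\sum_{k=2^n}^{2^{n+1}} \frac{|a_k|}{k} \le \frac{1}{2^n}\sum_{k=1}^{2^{n+1}}|a_k| \lesssim \widehat{a}_{2^{n+1}} \le \sup_{k\in\mathbb{N}_0}\min(1,2^{k-n})\widehat{a}_{2^k},
\]
where the last step uses $\min(1,2^{(n+1)-n})=1$. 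This already yields \eqref{gmgm-1}, and I would stress that it rests solely on the sector hypothesis, not on the $\textnormal{GM}$ condition.

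With \eqref{gmgm-1} available, the remaining step is word-for-word the real case: invoking $\{a_k\}_{k=1}^\infty\in\textnormal{GM}$ (with $\lambda=2^\nu$) to bound, for $n\ge 1$,
\[
\sum_{[2^{n-1}]\le|m|<2^n}|\Delta a_m| \lesssim \sum_{s=n-\nu-1}^{n+\nu-2}\ \sum_{k=2^s}^{2^{s+1}}\frac{|a_k|}{k},
\]
and then applying \eqref{gmgm-1} summand by summand exactly as in the derivation of \eqref{gmgm-2}, together with the trivial case $n=0$. This gives $\{a_k\}_{k=-\infty}^\infty\in\overline{\textnormal{GM}}$, since this final reduction uses only the $\textnormal{GM}$ condition and moduli and is insensitive to whether the coefficients are real.

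I expect the only genuine point requiring care to be the coercivity estimate itself: checking that every summand lies in $S_{\alpha,\beta}$ (the value $a_0=0$ is harmless, as it drops out of the partial sum), and that $\beta<\tfrac{\pi}{2}$ forces $\cos\beta>0$ so that the lower bound is nondegenerate. The conceptual payoff is that the sector condition precludes cancellation among the $a_k$, so partial sums control $\sum|a_k|$ from below; this is precisely what makes the delicate good/bad number analysis of the real case unnecessary here and, in fact, renders the key inequality \eqref{gmgm-1} independent of the $\textnormal{GM}$ hypothesis.
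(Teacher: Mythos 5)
Your proof is correct, and the overall strategy coincides with the paper's: reduce membership in $\overline{\textnormal{GM}}$ to the averaging inequality \eqref{gmgm-1} via the $\textnormal{GM}$ condition on moduli, and then derive \eqref{gmgm-1} from the sector geometry. The difference lies in how you prove \eqref{gmgm-1}. The paper invokes the coercivity bound (citing \cite[Lemma 4.2]{GrSaSa}) only on the single dyadic block, i.e.\ $\sum_{k=2^n}^{2^{n+1}}|a_k| \le \frac{1}{\cos\beta}\bigl|\sum_{k=2^n}^{2^{n+1}}a_k\bigr|$, and must then run a two-case argument (comparing $\bigl|\sum_{k=0}^{2^n-1}a_k\bigr|$ with $\frac{1}{2}\bigl|\sum_{k=2^n}^{2^{n+1}}a_k\bigr|$) to convert the block sum into one of the partial sums from $0$ that enter $\widehat{a}_{2^n}$ or $\widehat{a}_{2^{n+1}}$. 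You instead apply the coercivity to the full partial sum $\sum_{j=1}^{2^{n+1}}a_j$, which immediately yields $\widehat{a}_{2^{n+1}} \gtrsim 2^{-n}\sum_{j=1}^{2^{n+1}}|a_j| \ge 2^{-n}\sum_{k=2^n}^{2^{n+1}}|a_k|$ and eliminates the case analysis entirely; this exploits the fact that the sector condition precludes cancellation across the whole range $1\le j\le 2^{n+1}$, not merely within one block. Your version is slightly cleaner and self-contained (you prove the coercivity from $\operatorname{Re}(e^{-i\alpha}z)\ge|z|\cos\beta$ rather than citing it), at the cost of nothing; both give the same constant up to factors of $\cos\beta$. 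Your remark that \eqref{gmgm-1} here is independent of the $\textnormal{GM}$ hypothesis, which enters only in the final dyadic-block reduction, is also consistent with the paper's structure.
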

\begin{proof}
It is sufficient to prove that, for any $n \ge 0$, 
$$
\sum_{k=2^{n}}^{2^{n+1}} \frac{|a_k|}{k} \lesssim 
\sup_{k \in \mathbb{N}_0} \min(1, 2^{k-n}) \widehat{a}_{2^k}
$$

By  \cite[Lemma 4.2]{GrSaSa}, we have
$$
\sum_{k = 2^{n}}^{2^{n+1}} |a_k| \le \frac{1}{|\cos \beta|} \left|\sum_{k = 2^{n}}^{2^{n+1}} a_k \right|.
$$
Therefore, 
\begin{equation*}
\begin{split}
\sum_{k = 2^{n}}^{2^{n+1}}\frac{|a_k|}{k}
\lesssim \frac{1}{2^n} \sum_{k = 2^{n}}^{2^{n+1}} |a_k| 
\le \frac{1}{2^n |\cos \beta|} \left|\sum_{k = 2^{n}}^{2^{n+1}} a_k \right|.
\end{split}
\end{equation*}
Consider case, when  $\left|\sum\limits_{k=0}^{2^{n}-1} a_k\right| \ge \frac{1}{2}\left|\sum\limits_{k=2^{n}}^{2^{n+1}} a_k\right|$, 
then 
$$
\frac{1}{2^{n}}\left|\sum\limits_{k=0}^{2^{n}-1} a_k\right|
\ge \frac{1}{2^{n+1}}\left|\sum\limits_{k=2^{n}}^{2^{n+1}} a_k\right|
\gtrsim  \sum_{k=2^{n}}^{2^{n+1}} \frac{|a_k|}{k}.
$$
Therefore,
$$
\widehat{a}_{2^{n}} \gtrsim \sum_{k=2^{n}}^{2^{n+1}} \frac{|a_k|}{k}.
$$
Hence, 
\begin{equation*}
\begin{split}
\sum_{k=2^{n}}^{2^{n+1}} \frac{|a_k|}{k} \lesssim
\widehat{a}_{2^{n}}  \le 
\sup_{t \in \mathbb{N}_0} \min(1, 2^{t-n}) \widehat{a}_{2^t}. 
\end{split}
\end{equation*}

Now consider case, when  $\left|\sum\limits_{k=0}^{2^{n}-1} a_k\right| < \frac{1}{2}\left|\sum\limits_{k=2^{n}}^{2^{n+1}} a_k\right|$, 
then 
$$
\left|\sum\limits_{k=0}^{2^{n+1}} a_k\right|
\ge \left|\sum\limits_{k=2^n}^{2^{n+1}} a_k\right|
-\left|\sum\limits_{k=0}^{2^{n}-1} a_k\right| 
> \frac{1}{2} \left|\sum\limits_{k=2^n}^{2^{n+1}} a_k\right|.
$$
Hence,
$$
\frac{1}{2^{n+1}+1}\left|\sum\limits_{k=0}^{2^{n+1}} a_k\right|
\gtrsim \frac{1}{2^{n}}\left|\sum\limits_{k=2^{n}}^{2^{n+1}} a_k\right|
\gtrsim  \sum_{k=2^{n}}^{2^{n+1}} \frac{|a_k|}{k}.
$$
Thus,
$$
\sum_{k=2^{n}}^{2^{n+1}} \frac{|a_k|}{k} 
\lesssim 
\widehat{a}_{2^{n+1}} \le 
 \sup_{t \in \mathbb{N}_0} \min(1, 2^{t-n}) \widehat{a}_{2^t}.
$$
\end{proof}
\begin{prop}\label{compare2}
There exists a sequence $\{a_k\}_{k=-\infty}^{\infty} \in \overline{\textnormal{GM}}$ 
such that $a_k = 0$, for any $k \le 0$ and 
$\{a_k\}_{k=1}^{\infty} \notin \textnormal{GM}$. In this sense we interpret the following
relation:
$$
\overline{\textnormal{GM}} \setminus 
\textnormal{GM} \neq \varnothing.
$$ 
\end{prop}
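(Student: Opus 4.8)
The plan is to exhibit a one-sided sequence that oscillates with rapidly decaying amplitude on dyadic blocks, preceded by a single spike at $k=1$. Concretely, I would set $a_k=0$ for $k\le 0$, $a_1=1$, and
\[
a_k = 4^{-n}(-1)^k, \qquad 2^n \le k < 2^{n+1},\quad n\ge 1.
\]
The heuristic is the following. The class $\textnormal{GM}$ compares the \emph{local variation} $\sum_{k=n}^{2n}|a_k-a_{k+1}|$ with the \emph{local average} $\frac{1}{n}\sum_{k\asymp n}|a_k|$; an alternating block of length $2^n$ and amplitude $4^{-n}$ has variation $\asymp 4^{-n}2^n$ but average only $\asymp 4^{-n}$, so the ratio is $\asymp 2^n$ and blows up. In contrast, the class $\overline{\textnormal{GM}}$ sees only the \emph{cumulative} quantities $\widehat a_{2^k}$, built from the partial sums $\sum_{j=0}^m a_j$, and the single spike $a_1=1$ keeps all partial sums bounded away from $0$, so these cumulative averages stay as large as for a nicely behaved sequence. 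This is precisely the compensatory effect the paper advertises.

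First I would verify that $\{a_k\}_{k=1}^{\infty}\notin\textnormal{GM}$. Fix $\lambda>1$ and $C>0$ and test the defining inequality at $n=2^m$. On the block $[2^m,2^{m+1})$ every difference equals $2\cdot 4^{-m}$, so the left-hand side is $\asymp 4^{-m}2^m\asymp 2^{-m}$. In the average on the right, $\sum_{k\in[2^{n'},2^{n'+1})}|a_k|=4^{-n'}2^{n'}=2^{-n'}$, and over the range $k\in[2^m/\lambda,\lambda 2^m]$ this is dominated by the smallest block index, giving $\sum_{k=2^m/\lambda}^{\lambda 2^m}|a_k|\asymp \lambda 2^{-m}$; hence the right-hand side is $\asymp C\lambda 2^{-2m}$. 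The ratio is $\asymp 2^m/(C\lambda)\to\infty$ as $m\to\infty$, so no admissible pair $(C,\lambda)$ can exist.

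The core of the argument is to show $\{a_k\}_{k=-\infty}^{\infty}\in\overline{\textnormal{GM}}$, and the key observation is an exact cancellation: for $n\ge 1$ each block $[2^n,2^{n+1})$ has even length $2^n$ and starts at an even index, so $\sum_{k=2^n}^{2^{n+1}-1}(-1)^k=0$. Thus every \emph{completed} block contributes nothing to the partial sums, and for $m\ge 1$ one gets $\sum_{j=0}^m a_j = 1 + 4^{-n}\sum_{j=2^n}^{m}(-1)^j \in [1,\tfrac{5}{4}]$, where $n$ is the block of $m$. The partial sums being pinned in $[1,\tfrac{5}{4}]$ gives $\widehat a_{2^k}\asymp 2^{-k}$ for all $k\ge 0$, so the right-hand side of the $\overline{\textnormal{GM}}$ inequality for block $N$ equals $C\sup_k\min(1,2^{k-N})\widehat a_{2^k}\asymp 2^{-N}$. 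Meanwhile the left-hand side $\sum_{[2^{N-1}]\le|m|<2^N}|\Delta a_m|$ consists, for $N\ge 2$, of $2^{N-1}-1$ interior differences equal to $2\cdot 4^{-(N-1)}$ plus one boundary difference $4^{-(N-1)}+4^{-N}$, hence is $\asymp 4^{-(N-1)}2^N\asymp 2^{-N}$; the cases $N=0,1$ are checked directly using $|\Delta a_0|=1$ and $|\Delta a_1|=1-\tfrac14$. Comparing the two sides, the $\overline{\textnormal{GM}}$ inequality holds with a single absolute constant.

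I expect the main technical obstacle to be the two-sided control of the partial sums: establishing the lower bound (so that $\widehat a_{2^k}\gtrsim 2^{-k}$, which is what makes the right-hand side large enough) as well as the upper bound, together with the careful bookkeeping of the boundary differences between consecutive blocks and of the special indices $m=0,1$, and fixing the convention for $\sum_{j=0}^m a_j$ when $m<0$ (which vanishes here, since $a_j=0$ for $j\le 0$, so negative $m$ do not affect $\widehat a_{2^k}$). Once the cancellation $\sum_{k=2^n}^{2^{n+1}-1}(-1)^k=0$ is in hand, everything else reduces to the elementary dyadic estimates above.
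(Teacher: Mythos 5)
Your proposal is correct, but it is built on a genuinely different example from the paper's. The paper takes a \emph{non-negative} sequence, essentially $a_k=2^{-n}$ on the block $[2^n,2^{n+1})$, and destroys general monotonicity by zeroing out roughly $\sqrt{n}$ of the even indices at the start of each block: the variation then grows to $\asymp\sqrt{n}\,2^{-n}$ while the block average stays $\asymp 2^{-n}$, and membership in $\overline{\textnormal{GM}}$ is rescued by the growth of the partial sums, $\sum_{k<2^n}a_k\gtrsim n$, so that $\widehat{a}_{2^n}\gtrsim n\,2^{-n}$ dominates $\sqrt{n}\,2^{-n}$. Your example instead is fully alternating with amplitude $4^{-n}$ per block plus a unit spike at $k=1$; here $\textnormal{GM}$ fails maximally (the ratio of variation to average blows up like $2^m$ rather than $\sqrt{n}$), and membership in $\overline{\textnormal{GM}}$ comes from the exact cancellation of each completed block, which pins the partial sums in $[1,\tfrac54]$ and gives $\widehat{a}_{2^k}\asymp 2^{-k}$, exactly matching the block variation. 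Both verifications are sound; I checked in particular that your choice $4^{-n}$ is precisely the threshold amplitude for which $2^n c_n\lesssim 2^{-n}$, and that the edge cases $N=0,1$ of the $\overline{\textnormal{GM}}$ inequality go through using $|\Delta a_0|=1$ and $\widehat{a}_{1}\ge\tfrac12$. What each approach buys: yours ties in naturally with the paper's ``compensatory effect'' and alternating-series theme (Section 7) and exhibits the strongest possible failure of $\textnormal{GM}$, whereas the paper's example shows the sharper point that the inclusion $\textnormal{GM}\subsetneq\overline{\textnormal{GM}}$ is already strict within \emph{non-negative} sequences, where no sign cancellation is available and the separation must come from the slow $\sqrt{n}$ versus $n$ discrepancy.
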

\begin{proof}
Consider a sequence $\{a_k\}_{k=-\infty}^{\infty}$ defined as follows
$$
a_k = \\
\begin{cases}
0, & k \le 15, \\
\frac{1}{2^n}, & 2^n \le k < 2^{n+1}, \ \  n \ge 4, \ \ k \ \text{is odd};\\
0, & 2^n \le k < 2^n + [\sqrt{n}], \ \ n \ge 4, \ \ k \ \text{is even};\\
\frac{1}{2^n}, & 2^n + [\sqrt{n}] \le k < 2^{n+1}, \ \ n \ge 4, \ \ k \ \text{is even};   
\end{cases}
$$
For any $n \ge 4$, we have
\begin{equation*}
\begin{split}
\sum_{k=2^n}^{2^{n+1}-1}|\Delta a_k| \asymp
\sum_{k=2^n}^{2^n + [\sqrt{n}]}\frac{1}{2^n} \asymp  \frac{\sqrt{n}}{2^n}. 
\end{split}
\end{equation*}
On the other hand, for given $n \ge 4$, $\lambda > 1$,
we choose $m, l \in \mathbb{N}$ such that $2^m \le \frac{2^n}{\lambda} < 2^{m+1}$,
$2^{l-1} < \lambda 2^n \le 2^l$.
Hence,
  
\begin{equation*}
\begin{split}
& \frac{1}{2^n} \sum_{k = \frac{2^n}{\lambda}}^{\lambda 2^n}|a_k|
\le \frac{1}{2^n} \sum_{k = 2^m}^{2^l}|a_k| 
\le \frac{1}{2^n} \left(\sum_{s = m}^{l-1}\sum_{k = 2^s}^{2^{s+1}-1}|a_k| + |a_{2^l}|\right)
 \le \frac{1}{2^n} (l-m+1) 
\le  \frac{3+ 2\log_2 \lambda}{2^n}.
\end{split}
\end{equation*}
Therefore, $\{a_k\}_{k=1}^{\infty} \notin \textnormal{GM}$.

On the other hand, for sufficient large $n$, we have
\begin{equation*}
\begin{split}
\widehat{a}_{2^n} & \ge \frac{1}{2^n} \left|\sum_{k  = 0}^{2^n-1} a_k\right| \ge  
\frac{1}{2^n} \left(\sum_{s = 4}^{n-1} \sum_{k  = 2^s}^{2^{s+1}-1} a_k\right)
\ge \frac{1}{2^n} \left(\sum_{s = 4}^{n-1} \frac{2^s - [\sqrt{s}]}{2^s}\right)\\
& \ge \frac{1}{2^n} \left(n  - 4 - \sum_{s = 0}^{n-1}\frac{[\sqrt{s}]}{2^s}\right)
\gtrsim \frac{n}{2^n},\\
\end{split}
\end{equation*}
since series $\sum\limits_{n=0}^{\infty}\frac{[\sqrt{n}]}{2^n}$ is converging.

Hence, 
\begin{equation*}
\begin{split}
& \sup_{k \in \mathbb{N}_0} \min(1, 2^{k-n}) \widehat{a}_{2^k} 
\ge \widehat{a}_{2^n} \gtrsim \frac{n}{2^n}.
\end{split}
\end{equation*}
Therefore, $\{a_k\}_{k=-\infty}^{\infty} \in \overline{\textnormal{GM}}$.
\end{proof}

\section{Compensatory effect of  sequences in $\overline{\textnormal{GM}}$}\label{compensatory-effect}
The following example demonstrates that previously established generalizations 
of the Hardy-Littlewood theorem do not apply to certain series of the form  
$
\sum\limits_{k=1}^{\infty} (a_k + i b_k) e^{ikx}
$  
even while Theorems \ref{T5} and \ref{T6} remain applicable.

\begin{prop}\label{compare1}
There exist $\{a_k\}_{k=1}^{\infty} \notin \textnormal{GM}_{\mathbb{R}}$, 
$\{b_k\}_{k=1}^{\infty} \in \textnormal{GM}_{\mathbb{R}}$,  such that
$\{c_k\}_{k=-\infty}^{\infty} \in \overline{\textnormal{GM}}$, where 
$$
c_k = 
\begin{cases}
a_k + ib_k, & k > 0,\\
0, & k \le 0.
\end{cases} 
$$
\end{prop}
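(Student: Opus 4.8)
The plan is to exploit the compensatory effect described in the introductory remarks: the class $\overline{\textnormal{GM}}$ constrains the block variation $\sum_{[2^{n-1}]\le|m|<2^n}|\Delta c_m|$ only by the averages $\widehat{c}_{2^k}$ of the partial sums $\sum_{j=0}^m c_j$, and for $c_k=a_k+ib_k$ with $a_k,b_k\in\mathbb{R}$ these partial sums satisfy $\bigl|\sum_{j=0}^m c_j\bigr|\ge\bigl|\sum_{j=0}^m b_j\bigr|$, whence $\widehat{c}_{2^k}\ge\widehat{b}_{2^k}$ for every $k$. Thus the imaginary part alone can keep $\widehat{c}_{2^k}$ large, while the real part is free to violate general monotonicity as long as its own total variation is negligible. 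I would therefore take $b$ to be a monotone sequence whose partial sums grow (so that $\widehat{b}_{2^k}$ is comparatively large) and $a$ to be an alternating sequence of very small amplitude, which automatically fails $\textnormal{GM}$ yet contributes little variation.

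Concretely, I would first set $b_k=1/k$ for $k\ge 1$ and $b_k=0$ for $k\le 0$; since $\{b_k\}_{k=1}^\infty$ is positive and nonincreasing it lies in $\textnormal{GM}_{\mathbb{R}}$, and its partial sums give $\widehat{b}_{2^n}\asymp n/2^n$. For the real part I would put $a_k=(-1)^k 4^{-n}$ whenever $2^n\le k<2^{n+1}$ (with $n$ large and $a_k=0$ otherwise). Being alternating, $\{a_k\}$ satisfies $\sum_{k=n}^{2n}|\Delta a_k|\asymp\sum_{k=n}^{2n}|a_k|$, and a direct computation of the defining ratio of $\textnormal{GM}$ — block variation $\asymp 4^{-n}\cdot 2^n=2^{-n}$ against the average $\tfrac{1}{2^n}\sum_{k\asymp 2^n}|a_k|\asymp 4^{-n}$ — shows that the ratio blows up like $2^n$, so $\{a_k\}_{k=1}^\infty\notin\textnormal{GM}$, hence $\notin\textnormal{GM}_{\mathbb{R}}$.

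The main step is to verify $\{c_k\}_{k=-\infty}^\infty\in\overline{\textnormal{GM}}$. Using $|\Delta c_m|\le|\Delta a_m|+|\Delta b_m|$, I would bound the two parts separately. For the $b$-part I would invoke the already established inclusion $\textnormal{GM}_{\mathbb{R}}\subset\overline{\textnormal{GM}}$, namely inequality \eqref{gmgm-2} applied to the zero-extended $b$, which gives $\sum_{[2^{n-1}]\le|m|<2^n}|\Delta b_m|\lesssim\sup_{k\in\mathbb{N}_0}\min(1,2^{k-n})\widehat{b}_{2^k}$. For the $a$-part, the block variation is $\sum_{[2^{n-1}]\le|m|<2^n}|\Delta a_m|\asymp 4^{-n}2^n=2^{-n}$, which is dominated by $\widehat{b}_{2^n}\asymp n/2^n$. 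Combining both with the inequality $\widehat{c}_{2^k}\ge\widehat{b}_{2^k}$, I obtain $\sum_{[2^{n-1}]\le|m|<2^n}|\Delta c_m|\lesssim\sup_{k\in\mathbb{N}_0}\min(1,2^{k-n})\widehat{c}_{2^k}$, which is precisely membership in $\overline{\textnormal{GM}}$.

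I expect the only genuinely delicate point to be the balancing in the choice of amplitudes: the real part must oscillate enough to destroy general monotonicity yet keep its total variation far below the threshold $\widehat{c}_{2^n}\gtrsim n/2^n$ set by the imaginary part. The amplitude $4^{-n}$ is chosen precisely so that the $\textnormal{GM}$-ratio of $a$ still diverges while $\sum_{[2^{n-1}]\le|m|<2^n}|\Delta a_m|=2^{-n}=o(n/2^n)$; any choice satisfying these two competing requirements simultaneously would serve. The remaining ingredients — the growth $\widehat{b}_{2^n}\asymp n/2^n$ and the elementary bound $\widehat{c}\ge\widehat{b}$ — are routine.
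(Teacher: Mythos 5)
Your construction is correct and follows essentially the same strategy as the paper's proof: a small-amplitude alternating real part (which fails $\textnormal{GM}$ because $\sum|\Delta a_k|\asymp\sum|a_k|$) paired with a monotone imaginary part whose partial sums dominate, so that $\widehat{c}_{2^n}\ge\widehat{b}_{2^n}$ controls the total block variation of $c$. The paper takes $a_k=(-1)^k 2^{-\frac{7}{4}n}$ and $b_k=(2/3)^n$ on dyadic blocks instead of your $(-1)^k 4^{-n}$ and $1/k$, but the mechanism and all the key estimates are the same.
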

\begin{proof}
Consider sequence $\{a_k\}_{k=1}^{\infty}$ defined as follows 
$$
a_k = (-1)^k \frac{1}{2^{\frac{7}{4}n}},\ \ 
\text{for} \ \ 2^n \le k < 2^{n+1}, \ n \in \mathbb{N}_0.
$$

First, we consider
\begin{equation}\label{eq1}
\begin{split}
\sum_{k = 2^n}^{2^{n+1}}|\Delta a_k| & >
\sum_{k = 2^n}^{2^{n+1}-1}\frac{2}{2^{\frac{7}{4}n}}  = 
\frac{2}{2^{\frac{3}{4}n}}.  
\end{split}
\end{equation}
On the other hand, for given $n \in \mathbb{N}$, $\lambda > 1$, define $m, l \in \mathbb{N}$
such that $2^m \le \frac{2^n}{\lambda} < 2^{m+1}$, $2^{l-1} < \lambda 2^n \le 2^l$.
Hence,
\begin{equation}\label{eq2}
\begin{split}
 \frac{1}{2^n}\sum_{k = \frac{2^n}{\lambda}}^{\lambda 2^n}|a_k|
&\le \frac{1}{2^n}\sum_{k = 2^m}^{2^l}|a_k| 
= \frac{1}{2^n}\left(\sum_{s=m}^{l-1}\sum_{k = 2^s}^{2^{s+1}-1}|a_k|+|a_{2^{l}}|\right) \\
& = \frac{1}{2^n}\left(\sum_{s=m}^{l-1} \frac{1}{2^{\frac{3}{4}s}}+ \frac{1}{2^{\frac{7}{4}l}}\right) < \frac{4}{2^n}. 
\end{split}
\end{equation}
From \eqref{eq1} and \eqref{eq2} follows that  
there is no $C > 0$, $\lambda > 1$ such that inequality
$$
\sum_{k = 2^n}^{2^{n+1}} |\Delta a_k| \le 
\frac{C}{2^n}\sum_{k = \frac{2^n}{\lambda}}^{\lambda 2^n} |a_k|
$$
holds for any $n \in \mathbb{N}$.

Now we consider sequence $\{b_k\}_{k=1}^{\infty}$ defined as follows
$$
b_k = \left(\frac{2}{3}\right)^n \ \ 
\text{for} \ \ 2^n \le k < 2^{n+1}, \ \ n \in \mathbb{N}_0.
$$
The sequence $\{b_k\}_{k=1}^{\infty}$ is nonincreasing sequence, hence,
$\{b_k\}_{k=1}^{\infty} \in \textnormal{GM}$.

Finally, we consider sequence $\{c_k\}_{k=-\infty}^{\infty}$.
Let $n \in \mathbb{N}$. Then
\begin{equation*}
\begin{split}
 \sum_{k=2^n}^{2^{n+1}-1}|\Delta c_k| & = 
   \sum_{k=2^n}^{2^{n+1}-1}\sqrt{(\Delta a_k)^2 + (\Delta b_k)^2} \\
& =  \sum_{k=2^n}^{2^{n+1}-2}\sqrt{\frac{4}{2^{\frac{7}{2}n}}} + 
\sqrt{\left(\frac{1}{2^{\frac{7}{4}n}} + \frac{1}{2^{\frac{7}{4}(n+1)}}\right)^2 + 
\left(\left(\frac{2}{3}\right)^n - \left(\frac{2}{3}\right)^{n+1}\right)^2}\\
& \le \frac{2}{2^{\frac{3}{4}n}} + \sqrt{\frac{4}{2^{\frac{7}{2}n}} + \left(\frac{4}{9}\right)^n}
\le \frac{2}{2^{\frac{3}{4}n}} + 3\left(\frac{2}{3}\right)^n
< 5\left(\frac{2}{3}\right)^n.
\end{split}
\end{equation*}
On the other hand,
\begin{equation*}
\begin{split}
\widetilde{c}_{2^n} & \ge \frac{1}{2^n}\left|\sum_{j = 1}^{2^n-1} c_k\right|
= \frac{1}{2^n} \sqrt{\left(\sum_{k=0}^{n-1} \sum_{j = 2^k}^{2^{k+1}-1} a_j\right)^2 + 
\left(\sum_{k=0}^{n-1} \sum_{j = 2^k}^{2^{k+1}-1} b_j\right)^2}
\\
& = \frac{1}{2^n} \sqrt{\left(\sum_{k=0}^{n-1} \sum_{j = 2^k}^{2^{k+1}-1} (-1)^j \frac{1}{2^{\frac{7}{4}k}}\right)^2 + 
\left(\sum_{k=0}^{n-1} \sum_{j = 2^k}^{2^{k+1}-1} \left(\frac{2}{3}\right)^k\right)^2} \\
& = \frac{1}{2^n} \sqrt{1 + 
\left(\sum_{k=0}^{n-1}  \left(\frac{4}{3}\right)^k\right)^2}
\ge \frac{1}{2^n} \sqrt{1 + \left(\frac{4}{3}\right)^{2n-2}}
\ge \frac{3}{4}\left(\frac{2}{3}\right)^n.
\end{split}
\end{equation*}
Therefore, for any $n \in \mathbb{N}$,
$$
\sum_{k=2^n}^{2^{n+1}-1}|\Delta c_k| 
\le \frac{20}{3} \widehat{c}_{2^n} 
\lesssim \sup_{k \in \mathbb{N}_0} \min (1, 2^{k-n}) \widehat{c}_{2^k}.
$$
It is easy to check that this inequality holds for $n = 0$.
Hence, $\{c_k\}_{k=-\infty}^{\infty} \in \overline{\textnormal{GM}}$.
\end{proof}

The following example demonstrates that previously established generalizations 
of the Hardy-Littlewood theorem do not apply to certain series of the form  
$
\sum\limits_{k=-\infty}^{\infty} c_k e^{ikx}
$  
even while Theorems \ref{T5} and \ref{T6} remain applicable.
\begin{prop}
There exists $\{c_k\}_{k = -\infty}^{\infty}$ such that $\{c_k\}_{k=-\infty}^{0} \notin \overline{\textnormal{GM}}$, 
$\{c_k\}_{k = 1}^{\infty} \in \overline{\textnormal{GM}}$, but 
$\{c_k\}_{k=-\infty}^{\infty} \in \overline{\textnormal{GM}}$.
\end{prop}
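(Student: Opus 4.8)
The plan is to decouple the two halves of the sequence: the right half will be a tame monotone sequence that lies in $\overline{\textnormal{GM}}$ by the inclusion already proved, while the left half will oscillate so violently that, taken alone, its block variation dwarfs its own anchored averages. The compensation occurs because the averages $\widehat{c}_{2^{k}}$, built from the anchored sums $\sum_{j=0}^{m}c_{j}$ (read $\sum_{j=m}^{0}$ when $m<0$), pick up the full mass of the monotone right half at positive $m$; this single large quantity then dominates the block variation of the whole sequence. As in the preceding propositions, a one-sided sequence is declared to be in $\overline{\textnormal{GM}}$ when its two-sided zero-extension is. Concretely I would set $c_{0}=0$, let $c_{k}=(2/3)^{n}$ for $2^{n}\le k<2^{n+1}$ be the right half, and let $c_{k}=(-1)^{|k|}2^{-\frac74 n}$ for $2^{n}\le|k|<2^{n+1}$, $k<0$, be the left half (the same amplitudes that already worked in Proposition \ref{compare1}).

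First I would dispose of the right half: $\{c_{k}\}_{k\ge 1}$ is nonnegative, nonincreasing and vanishing, hence lies in $\textnormal{GM}_{\mathbb{R}}$, so by the established inclusion $\textnormal{GM}_{\mathbb{R}}\subset\overline{\textnormal{GM}}$ its zero-extension belongs to $\overline{\textnormal{GM}}$, which gives $\{c_{k}\}_{k=1}^{\infty}\in\overline{\textnormal{GM}}$.

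Next I would show the left half fails. Extending it by zero on $k\ge 0$, each dyadic block carries $\asymp 2^{n}$ sign changes of amplitude $\asymp 2^{-\frac74 n}$, so $\sum_{[2^{n-1}]\le|m|<2^{n}}|\Delta c_{m}|\asymp 2^{-\frac34 n}$. On the other hand the alternation forces block-by-block cancellation, so every anchored partial sum $\sum_{j=m}^{0}c_{j}$ is uniformly bounded; consequently $\widehat{c}_{2^{k}}\lesssim 2^{-k}$ and therefore $\sup_{k}\min(1,2^{k-n})\widehat{c}_{2^{k}}\lesssim 2^{-n}$. Since $2^{-\frac34 n}\gg 2^{-n}$, the ratio of the two sides tends to infinity and no constant $C$ can work, so $\{c_{k}\}_{k=-\infty}^{0}\notin\overline{\textnormal{GM}}$.

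Finally I would verify the full sequence. Here the positive-index averages feel the right half: for $m\asymp 2^{k}$ one has $\sum_{j=0}^{m}c_{j}\asymp(4/3)^{k}$, so $\widehat{c}_{2^{k}}\gtrsim(2/3)^{k}$, while the negative-index contribution is only of the lower order $2^{-k}$; thus $\widehat{c}_{2^{k}}\asymp(2/3)^{k}$ and $\sup_{k}\min(1,2^{k-n})\widehat{c}_{2^{k}}\asymp(2/3)^{n}$. The block variation of the whole sequence is $\asymp(2/3)^{n}+2^{-\frac34 n}\asymp(2/3)^{n}$, since $(2/3)^{n}=2^{n\log_{2}(2/3)}$ with $\log_{2}(2/3)=-0.585\ldots>-\tfrac34$. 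Hence the block variation is bounded by a fixed multiple of the right-hand side, giving $\{c_{k}\}_{k=-\infty}^{\infty}\in\overline{\textnormal{GM}}$. The main obstacle is the simultaneous tuning of the exponents: the left oscillation must be strong enough that $2^{-\frac34 n}\gg 2^{-n}$ (to defeat its own averages) yet weak enough that $2^{-\frac34 n}\ll(2/3)^{n}$ (to be absorbed by the right-half averages), and the exponent $7/4$ is exactly what satisfies both; carefully justifying the uniform boundedness of the alternating anchored partial sums is the technical heart of the argument.
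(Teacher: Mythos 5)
Your proof is correct and follows essentially the same strategy as the paper's: a tame monotone right half $(2/3)^n$ whose anchored partial sums through $0$ supply averages $\widehat{c}_{2^k}\gtrsim(2/3)^k$ that absorb everything, glued to a left half that fails $\overline{\textnormal{GM}}$ on its own. The only difference is the choice of the failing left half --- the paper uses a lacunary sequence with spikes $(2/3)^n$ at $k=-2^n$ and zeros in between, while you reuse the alternating sequence $(-1)^{|k|}2^{-\frac74 n}$ from Proposition \ref{compare1}; both choices work, and your exponent bookkeeping ($2^{-\frac34 n}\gg 2^{-n}$ for the failure of the left half, $2^{-\frac34 n}\lesssim(2/3)^n$ for its absorption) is accurate.
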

\begin{proof}
For integer $k \ge 0$, let's take
$$
c_k 
= \begin{cases}
\left(\frac{2}{3}\right)^n, & \text{for} \ \ 2^{n} \le k < 2^{n+1}, n \ge 0 \\
\left(\frac{2}{3}\right)^n, & \text{for} \ \ k = -2^n, n \ge 0 \\
0, & \text{for} \ \ -2^{n+1} < k < -2^{n}, n \ge 0 \\
0, & \text{for} \ \ k = 0\\
\end{cases}
$$
It is easy to see that, $\{c_k\}_{k=0}^{\infty}$ is a monotone sequence, 
whereas $\{c_k\}_{k=-\infty}^{0}$ is  lacunary. 

First consider the sequence $\{c_k\}_{k = -\infty}^{0}$. 
For $n \ge 1$, we have
$$
\sum_{i = -2^{n} + 1}^{-2^{n-1}} |\Delta c_i| = 
\sum_{i = -2^{n} + 1}^{-2^{n-1}} |c_i - c_{i-1}| = \left(\frac{2}{3}\right)^n.
$$
Let  $k \ge 1$ and $-2^k < m \le -2^{k-1}$, then
$$
\frac{1}{|m|+1} \left|\sum_{j = m}^{0} c_j\right| \le 
\frac{1}{2^{k-1}} \left(\sum_{j = -2^k}^{0} c_j\right) \le 
\frac{1}{2^{k-1}} \sum_{j = 0}^{k} \left(\frac{2}{3}\right)^j  
\lesssim \frac{1}{2^k}.
$$
Therefore, for any $k \ge 1$,
$$
{\widehat{c}_{2^k}}^- := 
\sup_{-2^k < m \le -2^{k-1}} \frac{1}{|m|+1} \left|\sum_{j = m}^{0} c_j\right|
\lesssim \frac{1}{2^{k}}.
$$
It is easy to see, that 
$$
\widehat{c}_{2^0}^- = 
\sup_{-2^0 < m \le -[2^{-1}]} \frac{1}{|m|+1} \left|\sum_{j = m}^{0} c_j\right| 
= c_0 = 0.
$$
Hence, 
\begin{equation*}
\begin{split}
\sup_{k \in \mathbb{N}_0} \min(1, 2^{k-n}) \widehat{c}_{2^k}^- &= 
\max\left\{\max_{0\le k \le n} 2^{k-n}\widehat{c}_{2^k}^-, \sup_{k > n}\widehat{c}_{2^k}^-\right\}\\
& \le \max\left\{\max_{0\le k \le n} 2^{k-n}\frac{1}{2^k}, \sup_{k > n}\frac{1}{2^k}\right\} 
\le \frac{1}{2^{n}}.
\end{split}
\end{equation*}
Therefore, $\{c_k\}_{k = -\infty}^{0} \notin \overline{\textnormal{GM}}$.

Now consider the sequence $\{c_k\}_{k = 0}^{+\infty}$. Let $n \ge 1$, one can see that
$$
\sum_{i = 2^{n-1}}^{2^{n}-1} |\Delta c_i| = 
\sum_{i = 2^{n-1}}^{2^{n} - 1} |c_i - c_{i+1}| \lesssim \left(\frac{2}{3}\right)^n.
$$

On the other hand, we have
\begin{equation*}
\begin{split}
& \sup_{k \in \mathbb{N}_0} \min(1, 2^{k-n}) \widehat{c}_{2^k}^+ \ge 
\widehat{c}_{2^{n+2}}^+ = 
\sup_{2^{n+1} \le m < 2^{n+2}} \frac{1}{m+1} \left|\sum_{j=0}^{m} c_j\right|
\ge \frac{1}{2^{n+2}} \left(\sum_{j=0}^{2^{n+1}} c_j\right)\\
& \ge \frac{1}{2^{n+2}} \left(\sum_{s=0}^{n} \sum_{j = 2^s}^{2^{s+1}-1} \left(\frac{2}{3}\right)^s\right)
= \frac{1}{2^{n+2}} \left(\sum_{s=0}^{n} \left(\frac{4}{3}\right)^s\right)\\
& = \frac{3}{2^{n+2}} \cdot \left(\left(\frac{4}{3}\right)^{n+1}-1\right) \gtrsim
\frac{3}{2^{n+2}} \cdot \left(\frac{4}{3}\right)^{n} = \frac{3}{4}\cdot\left(\frac{2}{3}\right)^n. 
\end{split}
\end{equation*}
Therefore, $\{c_k\}_{k=0}^{\infty} \in \overline{\textnormal{GM}}$.
And, moreover, $\{c_k\}_{k=-\infty}^{\infty} \in \overline{\textnormal{GM}}$.
\end{proof}

\section{Alternating series and idempotent multipliers}\label{alternating-series}
We consider slightly modified notion of idempotent Fourier multiplier. 

\begin{definition}
We will say that a sequence of  numbers $\{\lambda_k\}_{k=-\infty}^{\infty}$  
is  an idempotent multiplier in $L_p([-\pi,\pi])$, $1 < p < \infty$ if:  
\begin{enumerate}  
\item[1.] $\lambda_k \in \{-1,1\}$, for all $k \in \mathbb{Z}$;  
\item[2.] The sequence $\{\lambda_k\}_{k=-\infty}^{\infty}$ is an $L_p$-multiplier, i.e., for any function $f \in L_p([-\pi, \pi])$  
with the Fourier series $\sum\limits_{k=-\infty}^{\infty} c_k e^{ikx}$,  
there exists a function $f_{\lambda} \in L_p([-\pi,\pi])$ with the Fourier series  
$\sum\limits_{k=-\infty}^{\infty} \lambda_k c_k e^{ikx}$ such that  
$\|f_{\lambda}\|_p \lesssim \|f\|_p$.  
\end{enumerate}
\end{definition}

\begin{example}
One can see  that,  
a sequence $\lambda_k = (-1)^k$, $k \in \mathbb{Z}$,
is the idempotent $L_p$-multiplier. Indeed,  for a function 
$f(x) \sim \sum\limits_{k = -\infty}^{\infty}c_k e^{ikx}$, 
a series $\sum\limits_{k = -\infty}^{\infty}(-1)^k c_k e^{ikx} = 
\sum\limits_{k = -\infty}^{\infty}c_k e^{ik(x+\pi)}$ is a Fourier series of the function 
$f_{\lambda}(x) = f(x + \pi)$.
Since, $\|f(\cdot)\|_{L_p} = \|f(\cdot + \pi)\|_{L_p}$, 
we conclude that $\{(-1)^k\}_{k=-\infty}^{\infty}$
is the idempotent $L_p$-multiplier. 
\end{example}

\begin{example}
By Marcinkiewicz's multiplier theorem \cite{Ma},  
the sequence $\{\lambda_k\}_{k=-\infty}^{\infty}$ defined by  
$$
\lambda_k = 
\begin{cases}
(-1)^n, &  \text{for} \ \ 2^{n-1} \leq |k| < 2^n, \  \  n \ge 1\\
1, &  \text{for} \ \ k = 0
\end{cases}
$$
is an idempotent $L_p$-multiplier.
\end{example}

For any idempotent $L_p$-multiplier $\lambda = \{\lambda_k\}_{k=-\infty}^{\infty}$, since 
$\lambda_k^2 = 1$ and $|\lambda_k| = 1$, we establish:
\begin{enumerate}
\item[1.] $J_p(f) = J_p(f_{\lambda})$;
\item[2.]  $\|f\|_{L_p} \asymp \|f_{\lambda}\|_{L_p}$.
\end{enumerate}
These properties imply the following Corollary.

\begin{corollary}\label{corol1}
Let $1 < p < \infty$, $\lambda = \{\lambda_k\}_{k = -\infty}^{\infty}$ be 
an idempotent $L_p$-multiplier, and $f(x) \sim \sum\limits_{k=-\infty}^{\infty} c_k e^{ikx}$
be an integrable on $[-\pi, \pi]$ function. If $\lambda c = 
\{\lambda_k c_k\}_{k = -\infty}^{\infty} \in \overline{\textnormal{GM}}$, then
$$
\|f\|_{L_p} \asymp J_p(f).
$$ 
\end{corollary}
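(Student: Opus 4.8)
The plan is to reduce the claim to Theorem~\ref{T6} by passing to the modulated function $f_\lambda$, whose Fourier series is $\sum_{k=-\infty}^{\infty}\lambda_k c_k e^{ikx}$. Since $\lambda c=\{\lambda_k c_k\}_{k=-\infty}^{\infty}\in\overline{\textnormal{GM}}$ by hypothesis, Theorem~\ref{T6} should apply directly to $f_\lambda$, and the two properties recorded just before the statement transport the resulting equivalence back to $f$. Schematically, I would establish the chain
$$
\|f\|_{L_p}\asymp\|f_\lambda\|_{L_p}\asymp J_p(f_\lambda)=J_p(f),
$$
in which the outer $\asymp$ is property~2, the inner $\asymp$ is Theorem~\ref{T6} applied to $f_\lambda$, and the final equality is property~1.

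In more detail, the first link is property~1: because $|\lambda_k|=1$ we have $|\lambda_k c_k|=|c_k|$ for every $k$, and since $J_p$ depends only on the moduli of the coefficients this gives $J_p(f_\lambda)=J_p(f)$. The second link applies Theorem~\ref{T6} to $f_\lambda$, whose coefficient sequence $\lambda c$ lies in $\overline{\textnormal{GM}}$, yielding $\|f_\lambda\|_{L_p}\asymp J_p(f_\lambda)$. The third link is property~2: since $\lambda_k^2=1$, the multiplier $\lambda$ is its own inverse, and applying it to $f$ and to $f_\lambda$ in turn gives $\|f\|_{L_p}\lesssim\|f_\lambda\|_{L_p}\lesssim\|f\|_{L_p}$. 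Concatenating the three links produces $\|f\|_{L_p}\asymp J_p(f)$.

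The delicate point, and the step I expect to be the real obstacle, is the legitimacy of invoking Theorem~\ref{T6} for $f_\lambda$: that theorem requires $f_\lambda\in L_1$, whereas the hypothesis only furnishes $f\in L_1$ and $\lambda$ is a priori merely an $L_p$-multiplier. I would resolve this by a dichotomy on $J_p(f)$. If $J_p(f)=\infty$, I argue by contraposition: were $f\in L_p$, the multiplier would give $f_\lambda\in L_p\subset L_1$, and Theorem~\ref{T6} together with property~1 would force $\|f_\lambda\|_{L_p}\asymp J_p(f_\lambda)=J_p(f)=\infty$, a contradiction; hence $\|f\|_{L_p}=\infty=J_p(f)$. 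If $J_p(f)<\infty$, I first produce the $L_p$-function $g$ with Fourier coefficients $\lambda_k c_k$ directly from the series: the $\overline{\textnormal{GM}}$ condition gives $I_p(g)\lesssim A_p(\lambda c)$, and inequality~\eqref{2.111} gives $A_p(\lambda c)\lesssim J_p(f_\lambda)=J_p(f)<\infty$, so by Lemma~\ref{L1} and the polynomial case of Theorem~\ref{T4} the symmetric partial sums $\sum_{|k|\le 2^N}\lambda_k c_k e^{ikx}$ are Cauchy in $L_p$ and converge to some $g\in L_p$ with $\|g\|_{L_p}\lesssim J_p(f)$.

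Applying the $L_p$-multiplier $\lambda$ to $g$ then produces an $L_p$-function with Fourier coefficients $\lambda_k^2 c_k=c_k$; by uniqueness of Fourier coefficients in $L_1$ this function coincides with $f$, whence $f\in L_p$ and $\|f\|_{L_p}\lesssim\|g\|_{L_p}\lesssim J_p(f)$. The reverse inequality $J_p(f)\lesssim\|f\|_{L_p}$ then follows exactly as in the case $J_p(f)=\infty$ above, now with $f\in L_p$ in hand. The only genuinely computational care needed is the Cauchy estimate for the partial sums, which is routine once $I_p(f_\lambda)<\infty$ is known; everything else is the bookkeeping of the three-link chain.
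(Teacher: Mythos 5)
Your proof is correct and follows the same route as the paper, which derives the corollary from the three-link chain $\|f\|_{L_p}\asymp\|f_\lambda\|_{L_p}\asymp J_p(f_\lambda)=J_p(f)$ using the two properties recorded before the statement together with Theorem~\ref{T6}. Your additional dichotomy on $J_p(f)$ to justify that $f_\lambda$ is a legitimate $L_1$ input to Theorem~\ref{T6} fills in a technical point that the paper leaves implicit, but it does not change the underlying argument.
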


\end{document}